\renewcommand*{\backref}[1]{}
\renewcommand*{\backrefalt}[4]{%
    \scriptsize%
    {
    \ifcase #1 (\textcolor{red}{Uncited.})%
          \or (Cited\ on p.~#2)%
          \else (Cited\ on pp.~#2)%
    \fi%
    }
}
\title{Relative perfect complexes}
\author[L. Alonso]{Leovigildo Alonso Tarr\'{\i}o}
\address[L. A. T.]{CITMAGA\\
Departamento de Matem\'a\-ticas\\
Universidade de Santiago de Compostela\\
E-15782  Santiago de Compostela, Spain}
\email{leo.alonso@usc.es}
\author[A. Jerem\'{\i}as]{Ana Jerem\'{\i}as L\'opez}
\address[A. J. L.]{CITMAGA\\
Departamento de Matem\'a\-ticas\\
Universidade de Santiago de Compostela\\
E-15782  Santiago de Compostela, Spain}
\email{ana.jeremias@usc.es}
\author[F. Sancho]{Fernando Sancho de Salas}
\address[F. S. de S.]{Departamento de Matem\'{a}ticas and 
Instituto Universitario de F\'isica Fundamental y Matem\'aticas (IUFFyM)\\
Universidad de Salamanca\\
Plaza de la Merced 1-4\\
E-37008 Salamanca, Spain}
\email{fsancho@usal.es}
\thanks{L.~A.~T. and A.~J.~L. were partially supported by [ED431C 2019/10] (Xunta de Galicia with E.U.'s FEDER funds). F.~S.~de~S. was partially supported by  Grant PID2021-128665NB-I00 funded by MCIN/AEI/ 10.13039/501100011033 and, as appropriate, by “ERDF A way of making Europe”.}
\subjclass[2000]{14F08 (primary); 14F99 (secondary)}
\date{\emph{released}, March 8, 2022, 
 \emph{typeset}: \today}
\theoremstyle{plain}
\newtheorem{thm}{Theorem}[section]
\newtheorem{lem}[thm]{Lemma}
\newtheorem{cor}[thm]{Corollary}
\newtheorem{prop}[thm]{Proposition}
\theoremstyle{remark}
\newtheorem*{rem}{Remark}
\theoremstyle{definition}
\newtheorem*{ex}{Example}
\newtheorem*{ack}{Acknowledgements}
\newtheorem{cosa}[thm]{}
\numberwithin{equation}{thm}
\newcommand{\CC}{{\mathcal C}}
\newcommand{\CE}{{\mathcal E}}
\newcommand{\CF}{{\mathcal F}}
\newcommand{\CG}{{\mathcal G}}
\newcommand{\CH}{{\mathcal H}}
\newcommand{\CK}{{\mathcal K}}
\newcommand{\CL}{\mathcal{L}}
\newcommand{\CM}{\mathcal{M}}
\newcommand{\CN}{\mathcal{N}}
\newcommand{\CO}{\mathcal{O}}
\newcommand{\CR}{\mathcal{R}}
\newcommand{\SA}{\mathsf{A}}
\newcommand{\SF}{\mathsf{F}}
\newcommand{\SP}{\mathsf{P}}
\newcommand{\SR}{\mathsf{R}}
\newcommand{\ST}{\boldsymbol{\mathsf{T}}}
\newcommand{\D}{\boldsymbol{\mathsf{D}}}
\newcommand{\LL}{\boldsymbol{\mathsf{L}}}
\newcommand{\R}{\boldsymbol{\mathsf{R}}}
\newcommand{\A}{\mathsf{A}}
\newcommand{\sch}{\mathsf {Sch}}
\newcommand{\cc}{\mathsf{c}}
\newcommand{\bb}{\mathsf{b}}
\newcommand{\da}{{\boldsymbol{\mathsf{a}}}}
\newcommand{\db}{{\boldsymbol{\mathsf{b}}}}
\newcommand{\dd}{{\boldsymbol{\mathsf{d}}}}
\newcommand{\per}{\mathsf{pf}}
\newcommand{\qc}{\mathsf{qc}}
\newcommand{\pc}{\mathsf{pc}}
\newcommand{\NN}{\mathbb{N}}
\newcommand{\ZZ}{\mathbb{Z}}
\newcommand{\ip}{{\mathfrak p}}
\newcommand{\iq}{{\mathfrak q}}
\newcommand{\im}{{\mathfrak m}}
\newcommand{\dirlim}[1]{\begin{array}[t]{c} {\rm lim}\\[-7.5 pt]
 {\longrightarrow} \\[-7.5 pt] {\scriptstyle {#1}} \end{array}}
\newcommand{\lto}{\longrightarrow}
\newcommand{\xto}{\xrightarrow}
\newcommand{\inc}{\hookrightarrow}
\DeclareMathOperator{\iso}{\tilde{\to}}
\DeclareMathOperator{\liso}{\tilde{\lto}}
\newcommand{\goesto}{\rightsquigarrow}
\newcommand{\imp}{\Rightarrow}
\newcommand{\dimp}{\Leftrightarrow}
\newcommand{\ldimp}{\Longleftrightarrow}
\DeclareMathOperator{\Hom}{Hom}
\DeclareMathOperator{\shom}{\CH\!\mathit{om}}
\DeclareMathOperator{\ext}{Ext}
\DeclareMathOperator{\tor}{Tor}
\DeclareMathOperator{\rhom}{\R{}Hom}
\DeclareMathOperator{\Img}{Im}
\DeclareMathOperator{\cok}{Coker}
\DeclareMathOperator{\spec}{Spec}
\DeclareMathOperator{\h}{H}
\DeclareMathOperator{\id}{id}
\DeclareMathOperator{\rank}{rank}
\DeclareMathOperator{\rGamma}{\R{}\varGamma\!}
\DeclareMathOperator{\perf}{\boldsymbol{\mathsf{Perf}}}
\newcommand{\Dbc}[1]{\operatorname{\D^{\bb}}(#1)_{\pc}}
\newcommand{\Dbq}[1]{\operatorname{\D^{\bb}_{\qc}}(#1)}
\newcommand{\<}{\mkern-1mu}
\renewcommand{\>}{\mkern1mu}
\newcommand{\va}[1]{\vspace{#1pt}}
\newcommand{\biE}[3]{{\operatorname{K_0}}({#2} \xto{#1} {#3})}
\newcommand{\pf}[1]{{#1}^{}_{\<\<{\star}}}
\newcommand{\pb}[1]{{#1}^{\star}}
\newcommand{\BIK}{\mathrm{K}_0}
\newcommand{\ie}{{\it i.e.~}}
\newcommand{\cfr}{{\it cf.~}}
\begin{document}

\begin{abstract}
 Let $f \colon X \to Y$ be a morphism of concentrated schemes. We characterize $f$-perfect complexes $\CE$ as those such that the functor $\CE \otimes^{\LL}_X \LL f^*-$ preserves bounded complexes. We prove, as a consequence, that a quasi-proper morphism takes relative perfect complexes into perfect ones. We obtain a generalized version of the semicontinuity theorem of dimension of cohomology and Grauert's base change of the fibers. Finally, a bivariant theory of the Grothendieck group of perfect complexes is developed. 
\end{abstract}

\maketitle
\tableofcontents

\section*{Introduction}
Perfect complexes play an outstanding role in algebraic geometry as the derived version of vector bundles. In this paper we propose a relative notion of perfect complex, study its basic properties and obtain useful geometric applications. Additionally, we present a bivariant theory based on them.

A first notion of a relative   complex was developed by Illusie under the guidance of Grothendieck and it was published in a series of expos\'es \cite{gcf, erg, finrel} corresponding to SGA6, the \emph{S\'eminaire de G\'eom\'etrie Alg\'ebrique du Bois Marie} of 1966-67. These papers already contain a relative notion of perfect complexes. The precise definition \cite[D\'efinition 4.1]{finrel} is very general but somewhat difficult to check in practice.

Another related proposal was given by Lieblich \cite{lieb} in order to develop a moduli space of complexes for a proper morphism of schemes. His notion has good properties, as shown in \cite[\href{https://stacks.math.columbia.edu/tag/0DI0}{Tag 0DI0}]{sp} \textit{et seq}. Lieblich's notion is restricted to flat morphisms of finite presentation.

In this paper we propose for pseudo-coherent complexes a notion of relative perfection with respect to arbitrary scheme maps. We use the notion of \emph{finite flat dimension relative to a base} defined in terms of preservation of boundedness. This notion comes from an idea already used in a research on integral functors and Fourier-Mukai transforms in a singular setting by Hern\'andez, L\'opez and Sancho \cite{fmtgs,rifs}. We define a relative perfect complex as a pseudo-coherent complex which satisfies this relative finite flat dimension condition.

Our notion of relative perfect complex in \ref{relperfdef} is a priori different from the one in SGA6. We prove in Corollary~\ref{agreement} that for pseudo-coherent morphisms both notions agree using Illusie's characterization \cite[Proposition 4.4]{finrel}. 

The main advantage of our approach is that it is flexible enough to give concise proofs of its expected properties. Further, it is possible to give nice characterizations of relative perfect complexes with respect to quasi-proper morphisms. As a consequence of the good behavior of this notion, we develop a bivariant theory based on relative perfect complexes.

One of the main results in this paper is Theorem \ref{absrelmor} that states for a quasi-proper map $f \colon X \to Y$ (definition \ref{qprop}) that the derived direct image functor carries $f$-perfect complexes into perfect complexes. This theorem generalizes \cite[\href{https://stacks.math.columbia.edu/tag/0DJT}{Tag 0DJT}]{sp} to the non flat situation and its proof is concise without making recourse to Noetherian approximation. Notice that a quasi-proper map between Noetherian schemes is just a proper map. The statement of Theorem \ref{absrelmor} may look innocent at first sight but, as a matter of fact, it has a lot of consequences. Paired with a subtler notion of derived fiber \eqref{niderfib} we give a broad generalization, covering the non Noetherian case, of the classical semicontinuity and Grauert's theorems on base change for quasi-compact and quasi-separated schemes.

\bigskip

Let us discuss in greater detail the contents of this paper. The first section sets the basic conventions and recalls some well-known facts. Along this paper a scheme means a quasi-compact and quasi-separated scheme (also called concentrated in the literature). For completeness, we include a direct proof of a criterion of boundedness in terms of derived homs from perfect complexes (Theorem \ref{biglem2011}).

In the next section \ref{sec3} we discuss (absolute) perfect complexes from the point of view of the functorial characterization of finite flat dimension. We give proofs for the following properties: perfect complexes are detected fiberwise and co-fiberwise (Theorem \ref{perfcar}), on a regular scheme a bounded pseudo-coherent complex is perfect (Corollary \ref{cor105}). We claim no originality about these results but we give leaner proofs. An alternate approach to these results is given in \cite[\href{https://stacks.math.columbia.edu/tag/068W}{Tag 068W}]{sp} \textit{et seq}. The last result, that we have not found in the literature, characterizes a perfect complex as one that tensoring with it commutes with products (Proposition \ref{comprod}).

Section \ref{sec4} introduces the notion of relative perfect complex using the functorial point of view.  We prove the main theorem: a quasi-proper morphism takes relative perfect complexes into perfect complexes.  We prove several invariance results and the local nature of relative perfection  (Propositions \ref{locchar0} and \ref{locchar}). We also treat the behavior of relative perfect complexes through transverse squares that will play an important role in the last section of the paper, and we characterize relative perfection for flat morphisms with regular fibers (Proposition \ref{perffib}).

Section \ref{sec6} presents some additional properties of $f$-perfect complexes when the morphism $f \colon X \to Y$ is quasi-proper. First, it is shown that our definition agrees with the one in SGA6 (Corollary \ref{agreement}). We prove that relative perfection may be checked fiberwise and characterize $f$-perfect complexes through its behavior with respect to $\R{}f_*$ (Theorem \ref{carrelperf}). Finally, we show that for a complex $\CE$ over $X$, $f$-perfection is characterized by the fact that the functor $\CE \otimes^{\LL} -$ commutes with products of sheaves coming form the base (Proposition \ref{comprodrel}) thus giving a relative version of Proposition \ref{comprod}.

Section \ref{sec5} is devoted to obtain some consequences from Theorem \ref{absrelmor}, namely, semicontinuity and Grauert type theorems. We prove that Euler-Poincar\'e characteristic is locally constant (Theorem \ref{locconst}), the semicontinuity of dimensions of cohomology spaces of the nice derived fibers (Theorem \ref{semicont}) and Grauert's theorem on base change (Theorem \ref{grauert}). These theorems vastly generalize the usual situation getting rid of Noetherian hypothesis and obtaining the aforementioned results for sheaves (and complexes) not necessarily flat over the base, see Example on page \pageref{ejemplo}.

In the last section (\S \ref{bibthy}), we apply all this machinery to show that the Grothendieck group of perfect complexes defines a bivariant theory in the sense of Fulton and MacPherson \cite{fmc}. A first version of this result was given by Pascual in \cite{PG}, in a more restrictive setup.

We have tried to make the paper as self-contained as possible. The basic reference for derived categories of quasi-coherent sheaves used in this paper is Lipman's book \cite{yellow}. 

\begin{ack}
 We thank A. Casta\~no, J. Lipman and A. Neeman for some exchanges regarding this paper. Discussing \cite{fmtgs}, Lipman asked the third author about the relationship between the complexes of finite homological dimension defined there and relative perfect complexes. This paper is an extended answer to this question. We thank the referee for a careful reading and comments that helped to improve the paper.
\end{ack}

\section{Preliminaries and notation}

\begin{cosa}
Throughout this paper schemes  are assumed to be  \emph{concentrated}, i.e.,   quasi-compact and quasi-separated. A quasi-compact open subscheme of a concentrated scheme is also concentrated. Let us recall that a map between concentrated schemes is itself concentrated \cite[ \S 6.1, pp. 290 \textit{ff}.]{GD}.

For such a scheme $X$ we will denote by $\SA(X)$ the category of $\CO_X$-Modules and by $\SA_\qc(X)$ its subcategory of \emph{quasi-coherent} $\CO_X$-Modules and their unbounded derived categories by $\D(X) : = \D(\A(X))$ and $\D(\A_\qc(X))$, respectively. We most frequently use the full subcategory $\D_\qc(X)$ of complexes of $\D(X)$ with cohomology in $\SA_\qc(X)$.  For brevity, we will write $\Hom_X(-,-)$ for $\Hom_{\D(X)}(-,-)$ for any scheme $X$.

We shall  widely use, without further reference, the following basic results for a  morphism of schemes $f\colon X\to Y$:

\begin{enumerate}[ 1.]
 \item  The functor $\R{}f_* \colon \D_\qc(X)\to \D_\qc(Y)$ is bounded, that is, if $\CM\in \D_\qc(X)$ is bounded, so is $\R{}f_*\CM$ (\cite[Proposition (3.9.2)]{yellow}).
 \item Projection formula (\cite[Proposition (3.9.4)]{yellow}): for any $\CM\in \D_\qc(X)$ and any $\CN\in \D_\qc(Y)$, the natural morphism
\[ 
\R{}f_* \CM \otimes^{\LL}_Y\CN\to \R{} f_*(\CM\otimes^{\LL}_X\LL f^*\CN)
\] is an isomorphism.
\end{enumerate}

Recall that a complex $\CE \in \D_\qc(X)$ is called \emph{strictly perfect} if it is a bounded complex of locally free finitely generated modules. The complex $\CE$ is \emph{perfect} if it is locally quasi-isomorphic to a strictly perfect complex. We denote the full subcategory of $\D_\qc(X)$ made of perfect complexes by $\D(X)_\per$. For convenience, throughout the paper, we will use the shorthand:
\[
\perf(X) := \D(X)_\per
\]
\end{cosa}

On a triangulated category $\ST$, an object $E$ is called \emph{compact} if the functor $\Hom_{\ST}(E,-)$ commutes with coproducts. On a  scheme $X$ a complex in $\D_\qc(X)$ is compact if and only if it is perfect by \cite[Theorem 3.1.1]{bb}. They also prove that there exists a single perfect generator in $\D_\qc(X)$.
 
\begin{rem}
The equivalence of perfect and compact complexes is essentially due to Thomason and Tro\-baugh (see \cite[Theorem 2.4.3]{tt}) but notice that the literal notion of compact object was established later, see \cite[Corollary 2.3.]{Ngd} where a proof is given for a  separated scheme $X$ assuming that $\D_\qc(X)$ is compactly generated. A slightly different proof under semi-separated hypothesis is given in \cite[Proposition 4.7.]{asht}.
\end{rem}

\begin{cosa}\label{perfdual}
For a complex $\CE \in \D(X)$ we will denote its dual as
\[
\CE^\vee := \R\!\shom^\bullet_X(\CE, \CO_X).
\]
If $\CE$ is perfect, then so is $\CE^\vee$ and there is a canonical isomorphism
\[
\R\!\shom^\bullet_X(\CN, \CE^\vee \otimes^{\LL}_X \CM) \cong
\R\!\shom^\bullet_X(\CN \otimes^{\LL}_X \CE, \CM) 
\]
for any $\CN, \CM \in \D(X)$. It is a local property, so we may assume that $\CE$ is a strictly perfect and in this case the isomorphism is obvious. This property can be expressed as saying that $\CE$ is strongly dualizable, \cfr \cite[\S 4.3]{asht}.
\end{cosa}

\begin{cosa}\label{pscoh}
A complex $\CE \in \D_\qc(X)$ is called \emph{pseudo-coherent} if every point $x \in X$ has an affine neighborhood $j \colon U \inc X$ such that $j^*\CE$ is quasi-isomorphic to a bounded-above complex of finite-rank free $\CO_U$-Modules \cite[D\'efinition 2.3]{gcf}. We denote the full subcategory of pseudo-coherent complexes in $\D_\qc(X)$ by $\D(X)_{\pc}$. If $X$ is a Noetherian scheme, a pseudo-coherent complex is precisely a bounded-above complex with coherent cohomology by \cite[Corollaire 3.5 b)]{gcf}, in other words, $\D(X)_{\pc} = \D^-_{\cc}(X)$. See \cite[\S 4.3]{yellow} for further discussion about this property.

We will denote by $\Dbc{X}$ the full subcategory of $\D_\qc(X)$ formed by  pseudo-coherent complexes with bounded cohomology. If $X$ is Noetherian then $\Dbc{X}$ is $\D^{\bb}_{\cc}(X)$, the category of  complexes with bounded and coherent cohomology.
\end{cosa}

\begin{cosa}\label{finflat}
 Let $f \colon X \to Y$ be a morphism of schemes. We say that $\CE \in \D_\qc(X)$ has \emph{finite flat dimension for} $f$ if $\CE \otimes^{\LL}_X \LL f^* \CM$ is bounded for every $\CM \in \Dbq{Y}$. This notion is weaker than that of finite flat $f$-amplitude in \cite[D\'efinition 3.1]{finrel}. If $\CO_X$ has finite flat dimension for $f$, then we say that $f$ has \emph{finite flat dimension}. A complex $\CE \in \D_\qc(X)$ has \emph{finite flat dimension} if $\CE \otimes^{\LL}_X  \CM$ is bounded for every $\CM \in \Dbq{X}$, that is, if $\CE$ has finite flat dimension for $\id_X$.
\end{cosa}

\begin{cosa} 
\textbf{Pseudo-coherent morphisms.} A morphism $f \colon X \to Y$ is \emph{pseudo-coherent} if for every point of $X$ there is an open neighborhood $U \subset X$, a smooth morphism $p \colon P \to Y$, and a closed immersion $i \colon U \to P$ such that $f|_U = p \circ i$ where $i_*\CO_U$ is pseudo-coherent on $P$ (see \ref{pscoh}).  This definition essentially agrees with \cite[D\'efinition 1.2, p. 228]{finrel} in view of \cite[Proposition 4.4, p. 252]{finrel}. See also \cite[\href{https://stacks.math.columbia.edu/tag/067X}{Tag 067X}]{sp}.
 
Notice that if $f$ is a finite-type morphism with $Y$ Noetherian then $f$ is pseudo-coherent. Smooth morphisms and regular immersions are pseudo-coherent. Also, any composition of pseudo-coherent maps is still pseudo-coherent \cite[Corollaire 1.14, p. 236]{finrel}.
\end{cosa}


\begin{cosa} \label{qprop}
\textbf{ Quasi-proper morphisms.} 
Following \cite[Definition (4.3.3.1)]{yellow}, a  morphism $f \colon X \to Y$  is \emph{quasi-proper} if $\R{}f_*$ preserves pseudo-coherence, i.e. it takes $\D(X)_{\pc}$ into $\D(Y)_{\pc}$. An important instance of this notion is given by the fact that \emph{proper pseudo-coherent morphisms are quasi-proper} \cite[Corollary (4.3.3.2)]{yellow}. This result relies heavily in Kiehl’s Finiteness Theorem \cite[Theorem $2.9'$, p. 315]{kie}. In particular,  a proper morphism with $Y$ Noetherian is automatically quasi-proper.
\end{cosa}

\begin{cosa}\textbf{A criterion for boundedness}.
The characterization of bounded complexes in Proposition \ref{lem2011} will not be used until section \ref{sec6}. We advise the reader to skip it on first reading. Our proof is essentially self contained modulo standard facts and Thomason-Neeman localization theorem \cite{Ntty}, that we use as it is stated in \cite{Ngd}. For an alternative treatment, see \cite[\href{https://stacks.math.columbia.edu/tag/0GEQ}{Tag 0GEQ} and \href{https://stacks.math.columbia.edu/tag/0GEN}{Tag 0GEN}]{sp}.

Let $V = \spec(R)$ be an affine scheme, $Z\subset V$ a closed subset determined by the ideal $\langle t_1, \dots , t_r\rangle \subset R$. Let $W := V\setminus Z$ and $j \colon W \to V$ be the canonical open embedding. Denote by $\mathbf{U} = \{D(t_i) ; i \in \{1, \dots, r\}\}$ the open covering of $W$ given by the generators of the ideal defining $Z$. As $j$ is separated, we can compute $\R{}j_*$ by applying $j_*$ to the \v{C}ech complex relative to this covering \cite[chapter III, Lemma 4.2]{HAG}. Let $\CC(\mathbf{t}) := j_*\CC(\mathbf{U}, \CO_W)$. For $\CN \in \D_\qc(V)$, there is a canonical isomorphism
\begin{equation}\label{compCzech}
 \R{}j_*  j^*\CN  \cong \CC(\mathbf{t}) \otimes^{\LL}_V \CN
\end{equation}

There is a close relation of $\CC(\mathbf{t})$ with Koszul complexes.
Let  $\CK(\mathbf{t})$ be the Koszul complex associated to the sequence $\mathbf{t}= t_1, \ldots , t_r$. For each integer $s\geq 1$, let  $\mathbf{t}^s$ be the sequence $\mathbf{t}^s= t_1^s, \ldots , t_r^s$. The complex $\CK(\mathbf{t}^s)$ is a perfect complex concentrated in degrees $-r$ and $0$, and its dual 
\[
\CK(\mathbf{t}^s)^{\vee} :=\shom^\bullet_V (\CK(\mathbf{t}^s), \CO_V),
\]
is canonically  isomorphic to the perfect complexes $\CK_\vee(\mathbf{t}^s)$ (that we describe next  in \ref{notacion}) and $\CK(\mathbf{t}^s)[-r]$ (this is the so called self-duality of the Koszul complex \cite[Proposition 17.15.]{Eca}).
%
For any $t\in \Gamma(V,\CO_V)$, let $\CK_\vee (t)$ be the complex concentrated in degrees $0$ and $1$
\[
\cdots \rightarrow 0 \rightarrow \CO_V \rightarrow  \CO_V\rightarrow 0 \rightarrow \cdots,
\]
which is multiplication by $t$ from $\CK_{\vee}^0(t)= \CO_V$ to $\CK_{\vee}^1(t)= \CO_V$. 
For integers $1\leq s\leq s'$, let us consider the map  of complexes $\CK_{\vee}(t^{s})\rightarrow \CK_{\vee}(t^{s'})$ which is the identity in degree $0$ and multiplication by $t^{s'-s}$ in degree $1$. Let us denote by  $\CK_{\infty}(t)$ the direct limit of the direct system of complexes $\{\CK_{\vee}(t^s)\}_{s\in \NN}$. Then $\CK_{\infty}(t)$ is a complex concetrated  in degrees $0$ and $1$ where it looks like the canonical map $\CO_V\rightarrow {j_t}_* \CO_{D(t)}$, being  $j_t\colon D(t)\rightarrow V $  the canonical embedding of the principal open subset $D(t) \subset V$.
That is, $\CK_{\infty}({t})$ coincides with the augmented complex $\,\CO_V\to \CC({t})$. Set
\begin{equation} \label{notacion}
\CK_{\vee}(\mathbf{t})= \CK_{\vee}({t_1})\otimes_{\CO_V}\cdots\otimes_{\CO_V}  \CK_{\vee}( {t_r}),
\end{equation} 
and let us denote by $\CK_{\infty}(\mathbf{t})$ the complex\footnote{notation $\CK_{\infty}(\mathbf{t})$ as in \cite[\S 3.1]{AJL1}, where $\CK_{\vee}(\mathbf{t}^s)$ is denoted by $\CK(\mathbf{t}^s)$.}
\[
\CK_{\infty}(\mathbf{t}) := \dirlim{s\in \NN}  \CK_{\vee}(\mathbf{t}^s)= \CK_{\infty}({t_1})\otimes_{\CO_V}\cdots\otimes_{\CO_V}  \CK_{\infty}( {t_r}).
\]
Hence, there is a canonical distinguished triangle
\begin{equation}
\label{TKCz}
 \CK_{\infty}(\mathbf{t}) \lto \CO_V \lto \CC(\mathbf{t}) \stackrel{+}{\lto}.
\end{equation}
%
Let $\CN\in \D_\qc(V)$. Combining \eqref{compCzech} and \eqref{TKCz}, in view of the canonical triangle
\[
\rGamma_Z (\CN) \lto  \CN \lto \R{}j_*j^* \CN \stackrel{+}{\lto}
\]
we may identify 
\[
\rGamma_Z (\CN) \cong \CK_{\infty}(\mathbf{t}) \otimes_{\CO_V} \CN,
\]
because everything in sight commutes with (derived) tensor products. Therefore there exists a canonical isomorphism 
\[
\rGamma_Z (\CN) \cong
\dirlim{s\in \NN}  \CK_\vee(\mathbf{t}^s) \otimes_{\CO_V} \CN
\cong
\dirlim{s\in \NN}  \shom^\bullet_V ( \CK(\mathbf{t}^s), \CN).
\]

\end{cosa}

\begin{lem} \label{casoafin}
In the previous setting, if $\CN\in \D_\qc(V)$ and $i\in \ZZ$ are such that 
\[
\h^i\rhom^\bullet_V(\CK(\mathbf{t}), \CN)=0
\]   
then $\h^i\rGamma_Z(\CN)=0$.
\end{lem}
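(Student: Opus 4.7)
The plan is to reduce $\h^i\rGamma_Z(\CN)$ to a filtered colimit of Ext groups and then show that the stated vanishing at $s=1$ propagates to all $s\geq 1$. Combining the identification $\rGamma_Z(\CN)\cong\dirlim{s\in\NN}\shom^\bullet_V(\CK(\mathbf{t}^s),\CN)$ from the discussion just above the lemma with the exactness of filtered colimits (and the fact that on the affine $V$ everything in sight is quasi-coherent, so $\R\Gamma(V,-)$ is exact on the relevant complexes), one obtains
\[
\h^i\rGamma_Z(\CN)\;\cong\;\dirlim{s\in\NN}\h^i\rhom^\bullet_V(\CK(\mathbf{t}^s),\CN).
\]
It therefore suffices to show that the hypothesis forces $\h^i\rhom^\bullet_V(\CK(\mathbf{t}^s),\CN)=0$ for every $s\geq 1$.

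I would prove this propagation statement by induction on $r=\#\mathbf{t}$. For $r=1$, strict perfectness of the two-term complex $\CK(t^s)$ gives, for any $\CN\in\D_\qc(V)$, a hypercohomology short exact sequence
\[
0\to \h^{i-1}(\CN)/t^s\h^{i-1}(\CN) \to \h^i\rhom^\bullet_V(\CK(t^s),\CN) \to \ker\bigl(t^s\mid\h^i(\CN)\bigr)\to 0.
\]
At $s=1$ the vanishing of the middle term forces both outer terms to vanish, i.e., $t$ acts surjectively on $\h^{i-1}(\CN)$ and injectively on $\h^i(\CN)$. Since compositions of surjections are surjective, and likewise for injections, both properties persist for $t^s$; hence the outer terms vanish for every $s\geq 1$, and so does the middle.

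For the inductive step, put $\mathbf{t}'=(t_1,\dots,t_{r-1})$. Derived tensor–hom adjunction applied to the factorization $\CK(\mathbf{t})=\CK(\mathbf{t}')\otimes_{\CO_V}\CK(t_r)$ gives
\[
\R\shom^\bullet_V(\CK(\mathbf{t}),\CN)\;\simeq\;\R\shom^\bullet_V\bigl(\CK(t_r),\,\R\shom^\bullet_V(\CK(\mathbf{t}'),\CN)\bigr),
\]
together with the analogous identity once the powers are raised. Applying the base case with target $\R\shom^\bullet_V(\CK(\mathbf{t}'),\CN)$ passes the vanishing from $\CK(t_r)$ to $\CK(t_r^s)$, yielding $\h^i\rhom^\bullet_V(\CK(\mathbf{t}',t_r^s),\CN)=0$. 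Rewriting this via the adjunction as the vanishing of $\h^i\rhom^\bullet_V(\CK(\mathbf{t}'),\R\shom^\bullet_V(\CK(t_r^s),\CN))$ and invoking the inductive hypothesis for $\mathbf{t}'$ applied to the new target $\R\shom^\bullet_V(\CK(t_r^s),\CN)$ then propagates $\mathbf{t}'$ to $\mathbf{t}'^s$, giving the desired $\h^i\rhom^\bullet_V(\CK(\mathbf{t}^s),\CN)=0$.

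The main subtlety, and the place where care is needed, is that both the base case and the inductive hypothesis must hold for an arbitrary target in $\D_\qc(V)$, since the adjunction unwinding introduces complexes of the form $\R\shom^\bullet_V(\CK(t_r^s),\CN)$ as intermediate targets. The hypercohomology short exact sequence displayed above delivers the base case at this level of generality, which is precisely what allows the induction to close.
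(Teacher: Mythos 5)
Your proof is correct, but it takes a genuinely different route from the paper's. The paper constructs, for any $a,b \in \NN$ and $l \in \{1,\dots,r\}$, a distinguished triangle
\[
\CK(t_1,\dots,t_l^{a},\dots,t_r) \lto \CK(t_1,\dots,t_l^{a+b},\dots,t_r) \lto \CK(t_1,\dots,t_l^{b},\dots,t_r) \stackrel{+}{\lto},
\]
and then runs a direct induction on the exponents: fixing $b=1$ and applying $\rhom^\bullet_V(-,\CN)$, the vanishing of $\h^i$ at the two outer terms (the inductive step plus the original hypothesis) forces it at the middle, so one can increment the exponent of $t_1$, then of $t_2$, and so on. Your argument instead inducts on the number $r$ of elements: the base case $r=1$ is handled by the concrete hypercohomology short exact sequence identifying $\h^i\rhom^\bullet_V(\CK(t^s),\CN)$ with an extension of $\ker(t^s\mid\h^i(\CN))$ by $\h^{i-1}(\CN)/t^s\h^{i-1}(\CN)$, from which surjectivity and injectivity of $t$ on the relevant cohomologies propagate to $t^s$; the inductive step uses the tensor--hom adjunction applied to $\CK(\mathbf{t})\cong\CK(\mathbf{t}')\otimes_{\CO_V}\CK(t_r)$ to shuttle the problem down to $r-1$ with the auxiliary target $\R\shom^\bullet_V(\CK(t_r^s),\CN)$. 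Both proofs are valid. The paper's is arguably more elementary, needing nothing beyond the mapping-cone description of $\CK(t\cdot t')$. Your route makes the $r=1$ mechanism more transparent, and you correctly identify the subtlety that the statement must be proved uniformly in $\CN\in\D_\qc(V)$ (the paper's version is also uniform in $\CN$, but it never needs to invoke that uniformity), since your adjunction step generates fresh targets $\R\shom^\bullet_V(\CK(\mathbf{t}'),\CN)$ and $\R\shom^\bullet_V(\CK(t_r^s),\CN)$, which lie in $\D_\qc(V)$ precisely because the Koszul complexes are perfect.
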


\begin{proof}
For any two elements $t,\,t'\in R$, the Koszul complex $\CK(t \hspace{-1pt}\cdot \hspace{-1pt}t')$ is homotopy equivalent to the mapping cone of the morphism of complexes 
\[
\CK(t)[-1] \stackrel{\varphi}{\lto} \CK(t')
\]
determined by the identity map in degree $0$. As a consequence, for any $a, b \in \NN$ and $l \in \{1,\ldots,r\}$  there exist distinguished triangles
\[
\CK(t_1, \dots, t_l^{a}, \dots, t_r) \lto \CK(t_1, \dots, t_l^{a+b}\!, \dots, t_r) \lto \CK(t_1, \dots, t_l^{b}, \dots, t_r) \stackrel{+}{\lto}
\]
Let us assume that  $\h^i\rhom^\bullet_V(\CK(\mathbf{t}), \CN)=0$. Let us fix $b=1$, then inductively starting with $a=1$ and $l=1$, and applying $\rhom^\bullet_V(-, \CN)$ to the corresponding triangles we conclude that
$
\h^i\rhom^\bullet_V(\CK(\mathbf{t}^s), \CN)=0, 
$
for any $s\in \NN$. The complex $\shom^\bullet_V(\CK(\mathbf{t}^s), \CN)$, of quasi-coherent sheaves, corresponds to the complex of modules $\Hom^\bullet_V(\CK(\mathbf{t}^s), \CN)$. Therefore
\[
\h^i\rGamma_Z (\CN) \cong \dirlim{s\in \NN}\!\h^i\shom^\bullet_V(\CK(\mathbf{t}^s), \CN) = 0.\qedhere
\]
\end{proof}

\begin{prop} \label{lem2011}
Let $X$ be a  scheme. For any $\CM\in \D_\qc(X)$ the following statements are equivalent:
\begin{enumerate}
\item \label{1bounded}The complex $\CM$ is bounded.
\item \label{2bounded}For any perfect complex $\CE\in \D_\qc(X)$, $\rhom^\bullet_X(\CE,\CM)\in \D^\bb(\ZZ)$.
\end{enumerate}
\end{prop}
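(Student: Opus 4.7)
The plan splits into the two directions. For (\ref{1bounded}) $\Rightarrow$ (\ref{2bounded}), I use the identification $\rhom^\bullet_X(\CE,\CM)\cong\R\Gamma(X,\CE^\vee\otimes^{\LL}_X\CM)$ recorded in \ref{perfdual}: since the dual $\CE^\vee$ is again perfect and hence of finite Tor-amplitude, the tensor product lies in $\Dbq{X}$ whenever $\CM$ does; boundedness of $\R\Gamma(X,-)$ on $\Dbq{X}$ (property 1 of the preliminaries, applied to $X\to\spec(\ZZ)$) finishes this direction.

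For the converse (\ref{2bounded}) $\Rightarrow$ (\ref{1bounded}), I argue by induction on the minimum number $n$ of affine opens needed to cover $X$. The base case $n=1$ is immediate: for $X=\spec(R)$ the choice $\CE=\CO_X$ gives $\rhom^\bullet_X(\CO_X,\CM)\cong\R\Gamma(X,\CM)$, which on an affine scheme identifies with the underlying complex of $R$-modules of $\CM$, so its boundedness forces that of $\CM$. For $n>1$, write $X=U\cup V$ with $V$ affine and $U$ a quasi-compact open covered by strictly fewer than $n$ affines, and set $Z:=X\setminus U\subset V$, defined by the ideal $\langle t_1,\ldots,t_r\rangle\subset\Gamma(V,\CO_V)$. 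I aim to show (a) $\rGamma_Z\CM$ is bounded, and (b) $\CM|_U$ satisfies (\ref{2bounded}) on $U$; together with the inductive hypothesis and the canonical triangle
\[
\rGamma_Z\CM\lto\CM\lto\R{}j_*(\CM|_U)\stackrel{+}{\lto}
\]
(where $j\colon U\hookrightarrow X$), this delivers the boundedness of $\CM$.

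For (a), the Koszul complex $\CK(\mathbf{t})$ is perfect on $V$ with support contained in $Z$; via the Thomason-Neeman localization theorem \cite{Ngd} applied to the open immersion $V\hookrightarrow X$, it extends to a perfect complex $\tilde{\CK}$ on $X$ with support in $Z$ and $\tilde{\CK}|_V\cong\CK(\mathbf{t})$. Since the sheaf $\R\!\shom^\bullet_X(\tilde{\CK},\CM)$ is then supported in $Z\subset V$, a Mayer-Vietoris argument for the cover $X=U\cup V$ gives
\[
\rhom^\bullet_X(\tilde{\CK},\CM)\cong\rhom^\bullet_V(\CK(\mathbf{t}),\CM|_V).
\]
Hypothesis (\ref{2bounded}) applied to $\tilde{\CK}$ bounds the left side, and Lemma \ref{casoafin} then bounds $\rGamma_Z(\CM|_V)$; since $\rGamma_Z\CM$ has support in $Z\subset V$ and agrees there with $\rGamma_Z(\CM|_V)$, boundedness of the former follows. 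For (b), any perfect $\CE'$ on $U$ is a direct summand of $\CE|_U$ for some perfect $\CE$ on $X$, again by Thomason-Neeman. Applying $\rhom^\bullet_X(\CE,-)$ to the triangle above, the middle term is bounded by hypothesis and the first is bounded by the already-proven easy direction applied to $\rGamma_Z\CM$; the third term is therefore bounded, and via adjunction is isomorphic to $\rhom^\bullet_U(\CE|_U,\CM|_U)$, whose summand $\rhom^\bullet_U(\CE',\CM|_U)$ is hence bounded. By induction $\CM|_U$ is bounded, so $\R{}j_*(\CM|_U)$ is bounded and the triangle closes the argument.

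The main obstacle is part (a): one has to invoke Thomason-Neeman's description of Bousfield localizations of compactly generated triangulated categories to promote $\CK(\mathbf{t})$ to a perfect complex $\tilde{\CK}$ on $X$ with the same support, and then leverage the support condition on $\R\!\shom^\bullet_X(\tilde{\CK},\CM)$ via Mayer-Vietoris to yield the key identification of hom complexes. Once these two ingredients are in hand, the remaining steps reduce to formal manipulations of distinguished triangles.
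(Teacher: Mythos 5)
Your proof takes essentially the same route as the paper: same easy direction via $\rhom^\bullet_X(\CE,\CM)\cong\R\Gamma(X,\CE^\vee\otimes^{\LL}_X\CM)$, same induction on the size of an affine cover, same $Z$-local-cohomology triangle, same Koszul computation via Lemma~\ref{casoafin}, and the same Thomason--Neeman extension of perfect complexes from $U$ in step~(b). The one place where you differ is in producing the perfect complex $\tilde\CK$ on $X$ in step~(a). The version of Thomason--Neeman you cite for extending perfect complexes from an open (Theorem~2.1.4 in \cite{Ngd}, the same one the paper uses in step~(b)) only yields a perfect complex on $X$ restricting to $\CK(\mathbf{t})\oplus\CK(\mathbf{t})[1]$ and gives \emph{no control on its support outside $V$}; without support in $Z$, your Mayer--Vietoris identification $\rhom^\bullet_X(\tilde\CK,\CM)\cong\rhom^\bullet_V(\CK(\mathbf{t}),\CM|_V)$ would not follow. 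You do not actually need Thomason--Neeman here: since $Z$ is closed in $X$ (not just in $V$), the extension by zero $j_!\CK(\mathbf{t})$ coincides with $\R j_*\CK(\mathbf{t})$, is manifestly perfect (it is locally either zero or quasi-isomorphic to $\CK(\mathbf{t})$), has support $Z$, and restricts to $\CK(\mathbf{t})$; the hom identification then drops out of the adjunction $\Hom_X(j_!(-),-)\cong\Hom_V(-,j^*(-))$ rather than Mayer--Vietoris. This is the paper's argument and it is both simpler and actually correct where the Thomason--Neeman invocation is not. The rest of your argument — including the direct-summand trick in step~(b) and the closing use of the $\rGamma_Z$ triangle — is sound and matches the paper.
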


\begin{proof} For any complex $\CE\in \perf(X)$, by \ref{perfdual}, there is a natural isomorphism
\begin{equation}
\rhom^\bullet_X(\CE,\CM)\cong \R\Gamma(X, \CE^{\vee}\!\!\otimes_X^{\LL}\CM) 
\label{caso1}.
\end{equation}
As a consequence, the functor
$
\rhom^\bullet_X(\CE,-)\colon \D_\qc(X) \to \D(\ZZ)
$
is bounded, so (\ref{1bounded}) implies (\ref{2bounded}). 

To prove  (\ref{2bounded}) implies (\ref{1bounded}) we proceed by induction on $n$ the number of elements in an affine open covering of $X$. Let us assume $X=V_1\cup \cdots \cup V_{n-1}\cup V_n$ is an affine open covering, and $\CM\in \D_\qc(X)$ is a complex satisfying (\ref{2bounded}).

When $n=1$, $X$ is affine. By (\ref{caso1}) and the property (\ref{2bounded}) for $\CE=\CO_X$, 
the complex $\R\Gamma(X, \CM)$ is bounded, equivalently, $\CM\cong \R\Gamma(X, \CM)^{\widetilde{\quad}}$ is bounded.

In case $n>1$, let $U:=V_1\cup \cdots \cup V_{n-1}$, $V:=V_n$ and $Z:=X\setminus U=V\setminus U$, and let $j\colon V\hookrightarrow X$  and $j_U\colon U\hookrightarrow X$ be the corresponding open embeddings.

We start by proving that $\rGamma_Z \CM$ is bounded. Indeed, let $R:=\Gamma (V, \CO_X)$ and  
$t_1, \dots , t_r\in R$ be a sequence such that the ideal $\langle t_1, \dots , t_r\rangle \subset R$ determines the closed subset $Z\subset V=\spec(R)$. The perfect complex $\CK(\mathbf{t})\in \D_\qc(V)$ is supported on $Z$ that is also a closed subset of $X$, therefore $\R j_*\CK(\mathbf{t})\in \D_\qc(X)$ is perfect and $\R j_*\CK(\mathbf{t})=j_!\CK(\mathbf{t})$, in particular  
\[
\rhom^\bullet_V(\CK(\mathbf{t}), j^*\CM) \cong \rhom^\bullet_X(\R j_*\CK(\mathbf{t}), \CM);
\]
then $\rhom^\bullet_V(\CK(\mathbf{t}), j^*\CM)$ is bounded by property (\ref{2bounded}) of $\CM$. From Lemma~\ref{casoafin},  the complex $\rGamma_{Z} (j^*\CM)$ is bounded, hence  $\R j_* \rGamma_Z (j^*\CM)$  is bounded. The canonical map $\rGamma_Z (\CM) \to \rGamma_Z (\R j_* j^*\CM) $ is an isomorphism, because $Z\subset V$ is a closed subset of $X$. Using the canonical isomorphism 
$
\rGamma_Z (\R j_* j^*\CM)\cong \R j_* \rGamma_{Z} (j^*\CM), 
$
we conclude that  $\rGamma_Z (\CM)$ is bounded.

The functor $\R j_{U *}\colon \D_\qc(U)\to \D_\qc(X) $ is bounded and the complex $\rGamma_Z (\CM)$ is bounded, therefore the distinguished triangle
\begin{equation} \label{otro}
\rGamma_Z \CM\lto \CM
\lto \R j_{U *} (\CM\vert_{U})\stackrel{+}{\lto} 
\end{equation}
exhibits that $\CM$ is bounded if and only if $\CM\vert_{U}$ is bounded. Being $U:= V_1\cup \cdots \cup V_{n-1}$ an affine covering, by inductive hypothesis, to prove that $\CM\vert_{U}$ is bounded is equivalent to prove $\rhom^\bullet_U(\CF,\CM\vert_{U})$ is bounded, for any perfect complex $\CF\in \D_\qc(U)$.
By \cite[Theorem~2.1,~(2.1.4)]{Ngd}, given a perfect complex $\CF\in \D_\qc(U)$ there exists a perfect complex $\CE\in \D_\qc(X)$ such that $j_U^*\CE\cong \CF\oplus \CF[1]$. Therefore, to check $\rhom^\bullet_U(\CF,\CM\vert_{U})$ is bounded, for any perfect complex $\CF\in \D_\qc(U)$, is equivalent to check that the complex
\[
\rhom^\bullet_U(j_U^*\CE, \CM\vert_{U}) \cong \rhom^\bullet_X(\CE, \R j_{U *} \CM\vert_{U})
\]
 is bounded for any  perfect complex $\CE\in \D_\qc(X)$. For such an $\CE\in \D_\qc(X)$, applying the bounded functor  $\rhom^\bullet_X(\CE, -)$ to the triangle (\ref{otro}) we get the distinguished triangle 
\begin{equation*} 
\rhom^\bullet_X(\CE,\rGamma_Z \CM) \lto \rhom^\bullet_X(\CE,\CM)
\lto \rhom^\bullet_X(\CE,\R j_{U *} (\CM\vert_{U}))\stackrel{+}{\lto},
\end{equation*}
where $\rhom^\bullet_X(\CE,\rGamma_Z \CM)$ is bounded by the previous discussion and the complex $\rhom^\bullet_X(\CE, \CM)$ is bounded by hypothesis (\ref{2bounded}). As a consequence, the complex $\rhom^\bullet_X(\CE,\R j_{U *} (\CM\vert_{U}))$ is bounded, as wanted.
\end{proof}

\begin{thm}\label{biglem2011}
Let $X$ be a  scheme. For any $\CM\in \D_\qc(X)$ the following are equivalent:
\begin{enumerate}
\item \label{1bisbounded}The complex $\CM$ is bounded.
\item \label{2bisbounded}For a perfect generator $\CL$ of \,$\D_\qc(X)$, $\rhom^\bullet_X(\CL,\CM)\in \D^\bb(\ZZ)$.
\end{enumerate} 
\end{thm}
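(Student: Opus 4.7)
The plan is straightforward for the direction (\ref{1bisbounded}) $\Rightarrow$ (\ref{2bisbounded}): I would simply note this is the special case $\CE = \CL$ of Proposition \ref{lem2011}, since the bounded direction there holds for every perfect complex, in particular for $\CL$.

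For the converse (\ref{2bisbounded}) $\Rightarrow$ (\ref{1bisbounded}), my strategy is to bootstrap the hypothesis---known only for $\CL$---to every perfect complex, and then invoke Proposition \ref{lem2011}. To this end I would introduce the full subcategory
\[
\CT := \{\CE \in \perf(X) : \rhom^\bullet_X(\CE, \CM) \in \D^\bb(\ZZ)\}
\]
and check that it is a thick triangulated subcategory of $\perf(X)$. Closure under translations is immediate. Closure under cones will follow from the long exact sequence obtained by applying the cohomological functor $\rhom^\bullet_X(-, \CM)$ to a distinguished triangle: if two of the three terms have cohomology concentrated in a bounded range of degrees, so does the third. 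Closure under direct summands will use the splitting $\rhom^\bullet_X(\CE' \oplus \CE'', \CM) \cong \rhom^\bullet_X(\CE', \CM) \oplus \rhom^\bullet_X(\CE'', \CM)$, since any direct summand of a bounded complex of abelian groups is bounded.

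By hypothesis $\CL \in \CT$. Since $\CL$ is a perfect generator of $\D_\qc(X)$, the Thomason--Neeman localization theorem as stated in \cite{Ngd} guarantees that the smallest thick subcategory of $\perf(X)$ containing $\CL$ is all of $\perf(X)$. Therefore $\CT = \perf(X)$, so condition (\ref{2bounded}) of Proposition \ref{lem2011} is satisfied by $\CM$, and Proposition \ref{lem2011} will conclude that $\CM$ is bounded.

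The main subtlety will be invoking the correct form of Thomason--Neeman: what I need is precisely that a single perfect generator of $\D_\qc(X)$ already generates the triangulated category of perfect complexes as a thick subcategory. Once that is in place the rest is formal, and the whole argument is quite short compared with the proof of Proposition \ref{lem2011} itself.
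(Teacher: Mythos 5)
Your proof is correct and takes essentially the same route as the paper: both directions are reduced to Proposition \ref{lem2011}, and the converse is handled by showing that the perfect complexes $\CE$ with $\rhom^\bullet_X(\CE,\CM)$ bounded form a thick subcategory containing $\CL$, then invoking Thomason--Neeman (via \cite[Lemma 2.2]{Ntty}, stated as in \cite{Ngd}) together with \cite[Theorem 3.1.1]{bb} to conclude this thick subcategory is all of $\perf(X)$. The only difference is that you spell out the verification of thickness, which the paper takes as immediate.
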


\begin{proof}
 The fact that (\ref{1bisbounded}) implies (\ref{2bisbounded}) follows from (\ref{1bounded}) $\imp$ (\ref{2bounded}) in Proposition \ref{lem2011}. 
 
For the converse, let $\SP$ be the full category of $\D_\qc(X)$  whose objects are those $\CE \in \perf(X)$ such that $\rhom^\bullet_X(\CL,\CM)\in \D^\bb(\ZZ)$. Note that $\SP$ is a thick subcategory of $\D_\qc(X)$ and $\CL$ belongs to $\SP$ by (\ref{2bisbounded}). Now, using \cite[Lemma 2.2.]{Ntty} all compact objects in $\D_\qc(X)$ belong to $\SP$. By  \cite[Theorem 3.1.1]{bb}, the compact objects in $\D_\qc(X)$ are precisely the perfect complexes, so (\ref{1bisbounded}) follows.
\end{proof}

\begin{rem} If $f\colon X\to Y$ is a projective morphism and $\CO_X(1)$ is a relatively very ample invertible sheaf on $X$, then by Theorem \ref{biglem2011} an object $\CM\in \D_\qc(X)$ is bounded if and only if $\R{}f_*(\CM\otimes_X\CO_X(r))$ is bounded for any $r\in\ZZ$. Indeed, we may assume that $Y$ is affine, and then in this case the family $\{\CO_X(r) \,/ \, r \in \ZZ \}$ generates $\D_\qc(X)$. The claim follows using the isomorphism $\rhom^\bullet_X(\CO_X(-r),\CM) \cong \R{}f_*(\CM\otimes_X\CO_X(r))$.

Alternatively, assuming again that $Y$ is affine the result follows from \cite[Lemma 1.4]{fmtgs} (where the Noetherian hypothesis on $Y$ is unnecessary).
\end{rem}

\section{Characterizing perfect complexes}\label{sec3}

\begin{cosa} \label{notres}
Let $X$ be a scheme. For each $x \in X$, let $\kappa(x)$ denote its  residue field and let $i_x \colon \spec(\kappa(x)) \to X$ be the canonical morphism. From now on, we will denote $\CK(x) : = i_{x *}\widetilde{\kappa(x)}$. Notice that the functor $i_{x *}$ is exact, being affine, so there is no need to derive it. 
\end{cosa}

The next lemma is well-known. We include the proof here for convenience.

\begin{lem}\label{fintorext}
 Let $(R, \im, k)$ denote a local ring, $\im$ its maximal ideal and $k:= R/\im$ its residue field. Let $M$ be a finitely presented $R$-module. The following are equivalent: 
\begin{enumerate}
 \item The module $M$ is free.
 \item It holds that $\tor_1^R(M,k) = 0$.
 \item It holds that $\ext^1_R(M,k) = 0$.
\end{enumerate}
\end{lem}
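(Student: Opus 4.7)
My plan is to use a minimal free presentation and then exploit the Tor/Ext long exact sequences.

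The implications $(i) \Rightarrow (ii)$ and $(i) \Rightarrow (iii)$ are immediate: a free module is both flat and projective, so all higher $\tor$ and $\ext$ vanish.

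For the two non-trivial implications I would first construct a common starting point. Choose elements $x_1, \ldots, x_n \in M$ whose images form a $k$-basis of $M/\im M$; by Nakayama's lemma (applied to the finitely generated $R$-module $M$) these elements generate $M$, giving a surjection $\varphi \colon R^n \epi M$. Since $M$ is finitely presented, the kernel $K := \ker\varphi$ is a finitely generated $R$-module. Moreover, by the minimality of the generating set the composite $R^n \epi M \to M/\im M$ reduces, mod $\im$, to an isomorphism $k^n \iso M/\im M$, so tensoring the short exact sequence $0 \to K \to R^n \to M \to 0$ with $k$ forces $K \subset \im R^n$.

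For $(ii) \Rightarrow (i)$, apply $-\otimes^{\LL}_R k$ to the short exact sequence to get
\[
0 = \tor_1^R(M,k) \to K \otimes_R k \to R^n \otimes_R k \to M \otimes_R k \to 0,
\]
where the last arrow is an isomorphism by the previous paragraph. Hence $K \otimes_R k = K/\im K = 0$, and Nakayama applied to the finitely generated module $K$ gives $K = 0$, so $\varphi$ is an isomorphism and $M \cong R^n$ is free.

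For $(iii) \Rightarrow (i)$, apply $\Hom_R(-,k)$ to the same sequence to obtain
\[
\Hom_R(R^n,k) \xto{\alpha} \Hom_R(K,k) \to \ext^1_R(M,k) = 0.
\]
Any $R^n \to k$ factors through $R^n/\im R^n$, and since $K \subset \im R^n$ the restriction to $K$ vanishes; thus $\alpha = 0$. The surjectivity of $\alpha$ then gives $\Hom_R(K,k) = \Hom_k(K/\im K,k) = 0$, forcing $K/\im K = 0$, and Nakayama again yields $K = 0$.

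The only subtle point is the finite generation of $K$, which requires $M$ to be finitely \emph{presented} (not merely finitely generated); over a non-Noetherian local ring this hypothesis cannot be weakened. Everything else is a routine Nakayama argument applied to the appropriate long exact sequence.
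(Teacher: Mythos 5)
Your proof is correct and takes essentially the same route as the paper: a minimal free presentation $0 \to K \to R^n \to M \to 0$ with $u \otimes_R k$ an isomorphism, then the long exact sequences for $\tor$ and $\ext$ combined with Nakayama's lemma applied to the finitely generated kernel. You merely spell out a detail the paper leaves implicit in the $\ext$ case — that the map $\Hom_R(R^n,k)\to\Hom_R(K,k)$ vanishes because $K \subset \im R^n$ — which is a helpful clarification but not a different argument.
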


\begin{proof}
Choosing a minimal system of generators of $M$, we define an epimorphism $u \colon R^n \to M$ such that $u \otimes_R k$ is an isomorphism. The module $N := \ker(u)$ is finitely generated. If $\tor_1^R(M,k) = 0$, then $N \otimes_R k = N/\im N = 0$, which implies that $N = 0$ by Nakayama's lemma. Analogously, if $\ext^1_R(M,k) = 0$, then $\Hom_R(N/\im N ,k) = 0$ and again $N = 0$ by Nakayama's lemma.
\end{proof}

We have the following well-known characterizations of perfect complexes.

\begin{thm}\label{perfcar}
Let $X$ be a   scheme and $\CE \in \Dbc{X}$. The following conditions are equivalent:
\begin{enumerate}
 \item\label{uno} The complex $\CE$ is perfect.
 \item\label{dos} The complex $\CE$ has finite flat dimension (\ref{finflat}).
 \item\label{tres} The complex $\CE \otimes^{\LL}_X \CK(x)$ is bounded, for every $x \in X$.
 \item\label{cuatro} The complex $\LL{}i_x^* \CE$ is bounded, for every $x \in X$.
 \item\label{cinco} The complex $\R\!\shom_X^\bullet(\CE, \CM)$ is bounded, for every $\CM \in  \Dbq{X}$.
 \item\label{seis} The complex $\R\!\shom_X^\bullet(\CE, \CK(x))$ is bounded, for every $x \in X$.
\end{enumerate}
\end{thm}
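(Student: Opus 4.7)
The plan is to establish two cycles of implications, $(1) \Rightarrow (2) \Rightarrow (3) \Leftrightarrow (4) \Rightarrow (1)$ and $(1) \Rightarrow (5) \Rightarrow (6) \Rightarrow (4)$, which together force all six conditions to be equivalent. Most of the implications are formal; the main obstacle is the closing step $(4) \Rightarrow (1)$.

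The formal steps proceed as follows. $(1) \Rightarrow (2)$ holds because a perfect complex is locally a bounded complex of finite free modules, which trivially has finite flat dimension. $(2) \Rightarrow (3)$ is immediate since $\CK(x) \in \Dbq{X}$. For $(3) \Leftrightarrow (4)$, the projection formula recalled in \S1 yields
\[
\CE \otimes^{\LL}_X \CK(x) \,=\, \CE \otimes^{\LL}_X i_{x*}\widetilde{\kappa(x)} \,\cong\, i_{x*}\,\LL i_x^* \CE,
\]
and the exact, conservative functor $i_{x*}$ reflects boundedness. For $(1) \Rightarrow (5)$, the identification in \ref{perfdual} gives $\R\!\shom^\bullet_X(\CE,\CM) \cong \CE^\vee \otimes^{\LL}_X \CM$, and the perfect complex $\CE^\vee$ has finite flat dimension. $(5) \Rightarrow (6)$ is immediate by setting $\CM = \CK(x)$. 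For $(6) \Rightarrow (4)$, the sheaf-Hom form of the $(\LL i_x^*,i_{x*})$-adjunction yields
\[
\R\!\shom^\bullet_X(\CE, i_{x*}\widetilde{\kappa(x)}) \,\cong\, i_{x*}\R\!\shom^\bullet_{\spec\kappa(x)}(\LL i_x^* \CE, \widetilde{\kappa(x)}),
\]
and since $\LL i_x^*\CE$ is pseudo-coherent over the field $\kappa(x)$ (finite-dimensional cohomology in each degree), its linear dual is bounded if and only if $\LL i_x^* \CE$ itself is bounded.

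The main obstacle, $(4) \Rightarrow (1)$, is local at each $x \in X$. Using pseudo-coherence, I represent $\CE$ in an affine neighborhood of $x$ by a bounded-above complex $P^\bullet$ of finite free modules. I then choose $n$ large enough that $\h^i\CE = 0$ for $i < -n$ and $\h^{-k}(\LL i_x^* \CE) = 0$ for every $k \geq n$; such an $n$ exists by the bounded cohomology of $\CE$ together with hypothesis (4). The good truncation $\tau^{\geq -n} P^\bullet$ is then bounded and quasi-isomorphic to $\CE$, with new degree-$(-n)$ term $M := P^{-n}/\Img(d^{-n-1})$ finitely presented by pseudo-coherence, and with $P^i$ finite free in higher degrees. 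The tautological short exact sequence of complexes
\[
0 \lto \sigma^{\geq -n+1} P^\bullet \lto \tau^{\geq -n} P^\bullet \lto M[n] \lto 0,
\]
where $M[n]$ denotes $M$ placed in degree $-n$, yields upon application of $-\otimes^{\LL}\widetilde{\kappa(x)}$ a long exact Tor sequence that forces $\tor_1^{\CO_{X,x}}(M_x,\kappa(x)) = 0$, using the vanishing of $\tor_{n+1}^{\CO_{X,x}}(\CE_x,\kappa(x))$ guaranteed by the choice of $n$. Lemma \ref{fintorext} then makes $M_x$ free; since $M$ is finitely presented, it is locally free in a neighborhood of $x$, and $\tau^{\geq -n} P^\bullet$ is a strictly perfect representative of $\CE$ on that neighborhood, exhibiting $\CE$ as perfect near $x$. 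The delicate points in this last step are the Tor bookkeeping via the truncation sequence and the propagation of stalk-wise freeness of $M$ to a Zariski neighborhood; once these are in place, the rest of the argument is formal.
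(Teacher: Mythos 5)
Your proof is correct and the main implication, $(iv) \Rightarrow (i)$, is established by essentially the same mechanism as the paper's $(iii) \Rightarrow (i)$: resolve by a bounded-above complex of finite free modules, truncate, extract a $\tor_1$-vanishing from the boundedness of the derived fiber, and invoke Lemma~\ref{fintorext} to get local freeness of the truncation term. (The paper phrases the truncation via kernels $K_n = \ker(L^{n+1}\to L^{n+2})$ and directly matches $\h^{n-1}(L\otimes\kappa(x))$ with $\tor_1(K_n,\kappa(x))$; you phrase it via the cokernel $M = P^{-n}/\Img(d^{-n-1})$ and the short exact sequence of truncations. These are the same computation.)

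The one genuine difference is how you close the loop from~$(vi)$. The paper runs the truncation argument a second time for $(vi)\Rightarrow(i)$, using the $\ext^1$ half of Lemma~\ref{fintorext}. You instead prove $(vi)\Rightarrow(iv)$ formally: the derived sheaf-Hom adjunction for $i_x$ identifies $\R\!\shom^\bullet_X(\CE,\CK(x))$ with $i_{x*}$ of the $\kappa(x)$-linear dual of $\LL i_x^*\CE$, and over a field a complex with finite-dimensional cohomology in each degree is bounded if and only if its dual is. This lets you invoke only the $\tor_1$ half of Lemma~\ref{fintorext} and avoid repeating the main computation, at the modest cost of the adjunction step. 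Both routes are sound; yours is a hair more economical, the paper's is a hair more self-contained.

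One small point worth stating explicitly in a final write-up: your choice of $n$ large enough that $\h^{-k}(\LL i_x^*\CE)=0$ for $k\geq n$ depends on $x$, which is harmless since the resulting neighborhood of freeness also depends on $x$ and $X$ is then covered by such neighborhoods. Also, the faithfulness/conservativity of $i_{x*}$ used in $(iii)\Leftrightarrow(iv)$ and $(vi)\Rightarrow(iv)$ deserves a word — it holds because $i_{x*}$ is exact and fully faithful on quasi-coherent modules — though this is also left implicit in the paper.
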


\begin{proof}
 All the properties are local, therefore we may assume that $X$ is an affine scheme, $X = \spec(A)$. The implications \eqref{uno} $\imp$ \eqref{dos} $\imp$ \eqref{tres} are clear.
 
The equivalence \eqref{tres} $\dimp$ \eqref{cuatro} is a consequence of the isomorphism
\[
i_{x *}\LL{}i_x^* \CE \cong \CE \otimes^{\LL}_X \!\CK(x)
\]
having in mind that $i_x \colon \spec(\kappa(x)) \to X$ is affine.

Let us treat now \eqref{tres} $\imp$ \eqref{uno}. Let $E = \R\Gamma(X,\CE)$ and $L$ be a resolution of $E$ by a bounded above complex of finite free modules. Let $K_n := \ker(L^{n+1} \to L^{n+2})$. Since $E$ and $E \otimes^{\LL}_A \kappa(x)$ are bounded, there is a small enough $n$ such that
\begin{enumerate}[(a)]
 \item the complex $0 \to K_n \to L^{n+1} \to L^{n+2} \to \cdots$ is quasi-isomorphic to $E$,
 \item the sequence $\dots \to L^{n-1} \to L^{n} \to K_n \to 0$ is exact, and
 \item $\h^{n-1}(E \otimes^{\LL}_A \kappa(x)) = 0$.
 \end{enumerate}
 Then
 \[
 0 \overset{\text{(c)}}= \h^{n-1}(E \otimes^{\LL}_A \kappa(x)) 
 = \h^{n-1}(L \otimes_A \kappa(x)) 
 \overset{\text{(b)}}= \tor_1^A(K_n,\kappa(x)).
 \]
By Lemma \ref{fintorext} , $K_n$ is free in a neighborhood of $x$. Using (a) we see that $\CE$ is a perfect complex on this neighborhood.

An analogous argument shows that if $\R\!\shom_X^\bullet(\CE, \CK(x))$ is bounded then it follows that $\ext^1_A(K_n, \kappa(x)) = 0$ for $n \ll 0$ and, similarly one deduces that $K_n$ is free in a neighborhood of $x$. This yields \eqref{seis} $\imp$ \eqref{uno}.

Finally, the implications \eqref{uno} $\imp$ \eqref{cinco} $\imp$ \eqref{seis} are clear.
\end{proof}

\begin{rem}
In the Noetherian case this result is proved in \cite[Lemma 1.2]{fmtgs}. In the affine case the result corresponds to \cite[\href{https://stacks.math.columbia.edu/tag/068W}{Tag 068W}]{sp}.
\end{rem}

\begin{cosa}
 Recall that a regular scheme $X$ is a Noetherian scheme such that $\CO_{X,x}$ is a regular local ring for every $x \in X$. It is a celebrated theorem of Auslander-Buchs\-baum-Serre that a local ring is regular if, and only if, all its modules have finite projective dimension which in turn, it is equivalent to the finiteness of the flat dimension of its residue field, see \cite[Theorem 4.4.16]{wha}.
\end{cosa}

The following well-known result is an easy consequence of Theorem \ref{perfcar}.
\begin{cor}\label{cor105}
Let $X$ be a regular scheme. It holds that
\[
\CE \in \Dbc{X} \quad \ldimp \quad \CE \in \perf(X). 
\]
\end{cor}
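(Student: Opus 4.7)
The forward implication $\perf(X) \imp \Dbc{X}$ is immediate: a perfect complex is, by definition, locally quasi-isomorphic to a strictly perfect complex, hence pseudo-coherent and bounded. (It is also coherent because $X$ is Noetherian.)

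For the nontrivial direction, my plan is to reduce to a fiberwise criterion from Theorem~\ref{perfcar}. Given $\CE \in \Dbc{X}$, I want to verify condition \eqref{tres} of Theorem~\ref{perfcar}, i.e.\ that $\CE \otimes^{\LL}_X \CK(x)$ is bounded for every $x \in X$. Since this is a local question, I may replace $X$ by $\spec(\CO_{X,x})$ and $\CE$ by the complex of $\CO_{X,x}$-modules induced by $\CE$ at $x$; this complex has bounded, finitely generated cohomology.

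The key input is the Auslander--Buchsbaum--Serre theorem, as recalled in the excerpt just before the corollary: since $X$ is regular, $\CO_{X,x}$ is a regular local ring, hence $\kappa(x)$ has finite flat dimension over $\CO_{X,x}$. Tensoring a bounded complex with finitely generated cohomology by an object of finite flat dimension produces a bounded complex; thus $\CE \otimes^{\LL}_X \CK(x)$ is bounded, as required.

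Applying Theorem~\ref{perfcar}, \eqref{tres} $\imp$ \eqref{uno}, we conclude that $\CE$ is perfect. The only step that requires any thought is verifying that the finite flat dimension of $\kappa(x)$ is enough to bound the derived tensor product against an arbitrary $\CE \in \Dbc{X}$; but since $\CE$ is already bounded and pseudo-coherent, this is a standard (and routine) application of a truncated flat resolution of $\kappa(x)$, so no real obstacle remains.
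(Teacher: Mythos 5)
Your proof is correct and takes essentially the same route as the paper: both directions reduce to Theorem~\ref{perfcar}, and the nontrivial implication is handled by invoking Auslander--Buchsbaum--Serre to get finite flat dimension of $\kappa(x)$, then applying the fiberwise criterion \eqref{tres} $\imp$ \eqref{uno}.
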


\begin{proof}
Perfect complexes are bounded and pseudo-coherent. Conversely, if $\CE \in \Dbc{X}$, then $\CE \otimes^{\LL}_X \CK(x)$ is bounded because $\kappa(x)$ is of finite flat dimension as $\CO_{X,x}$-module for every $x \in X$. Then $\CE$ is perfect by Theorem \ref{perfcar}.
\end{proof}

\begin{cosa} \label{prodqc}
Let $X$ be a  scheme and  $\iota \colon  \D_\qc(X) \to \D(X)$ the canonical embedding. By \cite[Theorem 3.1.1]{bb}, $\D_\qc(X)$ has a perfect generator. Applying \cite[Theorem 4.1]{Ngd}, as $\iota$ commutes with coproducts, it possesses a right adjoint. As a consequence, the category $\D_\qc(X)$ has all products. \emph{Caveat lector}, the functor $\iota$ does \emph{not} preserve products.
\end{cosa}

\begin{prop}\label{comprod}
Let $X$ be a  scheme and $\CE \in \D_\qc(X)$. The complex $\CE$ is perfect if, and only if, the functor $\CE \otimes^{\LL}_X - \colon \D_\qc(X) \to \D_\qc(X)$ commutes with products. 
\end{prop}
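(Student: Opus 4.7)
Plan: The forward direction is straightforward. If $\CE$ is perfect, I invoke \ref{perfdual} with the roles of $\CE$ and $\CE^\vee$ swapped and with $\CN = \CO_X$, obtaining a natural isomorphism $\CE \otimes^{\LL}_X \CM \cong \R\!\shom^\bullet_X(\CE^\vee, \CM)$ for every $\CM \in \D_\qc(X)$. The right-hand side is right adjoint to $-\otimes^{\LL}_X \CE^\vee$ on $\D_\qc(X)$, whose products exist by \ref{prodqc}, and any right adjoint preserves products.

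For the converse, set $F := \CE \otimes^{\LL}_X -$ and assume that $F$ preserves products. Since $\D_\qc(X)$ is compactly generated by a perfect generator, Neeman's dual Brown representability---every product-preserving triangulated functor between compactly generated triangulated categories admits a left adjoint---supplies a left adjoint $G$ of $F$. Together with the canonical right adjoint $R := \R\!\shom^\bullet_X(\CE, -)$, this yields a string of adjunctions $G \dashv F \dashv R$. Put $\CP := G(\CO_X)$. I first note that $\CP$ is compact: by the adjunction, $\Hom_X(\CP, -) = \Hom_X(\CO_X, \CE \otimes^{\LL}_X -)$, which preserves coproducts because $\CO_X$ is compact and $F$ preserves coproducts as a left adjoint. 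Hence, by the equivalence between compactness and perfection recalled in the preliminaries, $\CP$ is perfect.

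The key step is to identify $G$ with $\CP \otimes^{\LL}_X -$. For any perfect $\CN$ and any $M, P \in \D_\qc(X)$, the strong dualizability of $\CN$ (\ref{perfdual}) gives a natural isomorphism $\R\!\shom^\bullet_X(\CN, \CE \otimes^{\LL}_X P) \cong \CE \otimes^{\LL}_X \R\!\shom^\bullet_X(\CN, P)$; combining this with the adjunction $G \dashv F$ and Yoneda produces an isomorphism $G(M \otimes^{\LL}_X \CN) \cong G(M) \otimes^{\LL}_X \CN$ whenever $\CN$ is perfect. Taking $M = \CO_X$ gives $G(\CN) \cong \CP \otimes^{\LL}_X \CN$ for perfect $\CN$. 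Now $G$ and $\CP \otimes^{\LL}_X -$ are coproduct-preserving triangulated functors, so the natural transformation $G \to \CP \otimes^{\LL}_X -$ built from the unit of $G \dashv F$ at $\CO_X$ is an isomorphism on a perfect generator, and therefore on the localizing subcategory it generates, which is all of $\D_\qc(X)$. Hence $F$ is the right adjoint of $\CP \otimes^{\LL}_X -$, forcing $\CE \otimes^{\LL}_X M \cong \R\!\shom^\bullet_X(\CP, M)$; specializing to $M = \CO_X$ gives $\CE \cong \CP^\vee$, which is perfect as the dual of a perfect complex.

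The main technical obstacle will be the clean invocation of dual Brown representability together with the extension-from-perfects step: one must verify that the natural transformation $G \to \CP \otimes^{\LL}_X -$ is genuinely an isomorphism of functors, rather than merely pointwise so on the generator. Both ingredients are standard in the theory of compactly generated triangulated categories, but they require careful sourcing; an alternative route would be to bypass $G$ and show directly that the canonical map $\CE^\vee \otimes^{\LL}_X - \to \R\!\shom^\bullet_X(\CE, -)$ is an isomorphism under the product-preservation hypothesis, which by Theorem~\ref{perfcar}(\ref{cinco}) also entails that $\CE$ is perfect.
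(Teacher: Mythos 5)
Your proposal is correct in substance and shares the same key idea as the paper, but it takes a more elaborate route. The paper applies dual Brown representability (Neeman, Krause) to the single functor $\SF(-)=\Hom_X(\CO_X,\CE\otimes^{\LL}_X-)$ — which is exactly your $\Hom_X(G(\CO_X),-)$ — to get a representing object $\CR$, proves it is compact (hence perfect), and then \emph{directly} constructs the candidate morphism $\varphi\colon\CR^\vee\to\CE$ from the image of $\iid_\CR$ under the representing isomorphism, verifying it is an isomorphism by testing against perfect complexes $\CL$ via a short chain $\Hom_X(\CL,\CR^\vee)\cong\Hom_X(\CR,\CL^\vee)\cong\Hom_X(\CO_X,\CE\otimes^{\LL}_X\CL^\vee)\cong\Hom_X(\CL,\CE)$. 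You instead first invoke the stronger corollary that the product-preserving exact functor $F$ admits a left adjoint $G$, then identify $G$ with $\CP\otimes^{\LL}_X-$ by a devissage from perfect complexes, and finally deduce $F\cong\R\!\shom^\bullet_X(\CP,-)$ by uniqueness of adjoints, giving $\CE\cong\CP^\vee$. Both rest on the same two pillars — dual Brown representability and compact $=$ perfect — and your $\CP=G(\CO_X)$ is literally the paper's $\CR$. What your route buys you is a cleaner functorial picture ($G\dashv F\dashv R$); what it costs you is the extra work of globalizing the natural transformation $G\to\CP\otimes^{\LL}_X-$, which you rightly flag as the delicate point: one must build the natural transformation from the unit and then run a localizing-subcategory argument, whereas the paper simply applies Yoneda-type comparison against a generating set of compacts, which is shorter and requires no extension step. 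Your forward direction is also fine, though the paper's phrasing (existence of a \emph{left} adjoint $\CE^\vee\otimes^{\LL}_X-$) is the more standard way to see product-preservation.
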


\begin{proof}
 If $\CE \in \perf(X)$ then the functor $\CE \otimes^{\LL}_X -$ has a left adjoint $\CE^{\vee} \otimes^{\LL}_X - $, therefore it commutes with products (see \ref{perfdual}).

Let us check the converse. The functor $\SF$ from $\D_\qc(X)$ to abelian groups defined by $\SF(-)=\Hom_X(\CO_X,\, \CE \otimes^{\LL}_X -)$ commutes with products by the hypothesis on $\CE$. It follows that it is representable by \cite[Theorem 8.6.1.]{Ntc} or \cite[Theorem B]{Kbr} because $\D_\qc(X)$ is compactly generated \cite[Theorem 3.1.1]{bb}. Let $\CR \in \D_\qc(X)$ be the representing object, \ie
\begin{equation}\label{repr}
 \Hom_X(\CO_X,\, \CE \otimes^{\LL}_X \CM) \cong \Hom_X(\CR, \CM) \qquad (\CM \in \D_\qc(X)).
\end{equation}
Since $\SF$ commutes with coproducts, $\CR$ is compact, and therefore perfect. The identity of $\CR$ gives, through \eqref{repr}, a morphism $\CO_X \to \CE \otimes^{\LL}_X \CR$ and then a morphism $\varphi \colon \CR^{\vee} \to \CE$. We conclude by proving that $\varphi$ is an isomorphism. Again, since $\D_\qc(X)$ is compactly generated, it suffices to see that the morphism induced by $\varphi$,
\[
\Hom_X(\CL,\CR^{\vee}) \lto \Hom_X(\CL,\CE), 
\]
is an isomorphism for every $\CL \in \perf(X)$. Indeed, this map factors as the following chain of isomorphisms
\[
\Hom_X(\CL,\CR^{\vee}) \iso \Hom_X(\CR,\CL^{\vee}) \iso 
\Hom_X(\CO_X, \CE \otimes^{\LL}_X \CL^{\vee}) \iso  \Hom_X(\CL,\CE).
\]
The first one is isomorphism by adjunction, the middle one by \eqref{repr} and the last one because $\CL$ is perfect and, thus, strongly dualizable as in \eqref{perfdual}.
\end{proof}


\section{Relatively perfect complexes}\label{sec4}

\begin{cosa}\label{relperfdef}
Let $f \colon X \to Y$ be a morphism of schemes. A complex $\CE \in \D(X)_{\pc}$ is \emph{relatively perfect} over $Y$ or just \emph{$f$-perfect} if 
\[
\CE \otimes^{\LL}_X \LL f^* \!\CM  \,\, \text{ is bounded for every } \,\, \CM \in \Dbq{Y}.
\]
In \cite[Definition 1.3]{fmtgs}, these complexes are denominated of finite homological dimension over $Y$. By taking $\CM=\CO_Y$, we see that $\CE$ is bounded.

 We denote by $\perf(X | Y)$ or simply $\perf(f)$ the full subcategory of $\Dbc{X}$ formed by $f$-perfect complexes. 
 \end{cosa}

\begin{rem}
\begin{enumerate}[(a)]
 \item If $X = Y$ and $f = \id_X$ then, by \eqref{uno} $\dimp$ \eqref{dos} in Theorem \ref{perfcar}, $\id_X$-perfect is just (absolutely) perfect, so the terminology is justified.
 \item \label{operfffd} The structure sheaf $\CO_X$ is $f$-perfect if, and only if, $f \colon X \to Y$ is of finite flat dimension (\cfr \ref{finflat}).  
\end{enumerate}
\end{rem}

 The following Lemma is immediate.

\begin{lem}\label{tenper}
 Let $f \colon X \to Y$ be a morphism of schemes. If  $\CE$ is $f$-perfect and $\CL$ is perfect over $X$, then $\CL \otimes^{\LL}_{X} \CE$ is $f$-perfect. Hence, if $f$ is of finite flat dimension,  any perfect complex on $X$ is $f$-perfect.
\end{lem}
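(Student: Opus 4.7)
The plan is to verify both required properties of $\CL \otimes^{\LL}_{X} \CE$ directly from the definition of $f$-perfection in \ref{relperfdef}: pseudo-coherence and boundedness after tensoring with $\LL f^*\CM$ for any $\CM \in \Dbq{Y}$.

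First I would check pseudo-coherence of $\CL \otimes^{\LL}_{X} \CE$. Since pseudo-coherence is local (see \ref{pscoh}) and $\CL$ is perfect, after shrinking we may assume $\CL$ is strictly perfect, i.e., a bounded complex of finite-rank free $\CO_X$-Modules. Then $\CL \otimes^{\LL}_{X} \CE$ is obtained from $\CE$ by finitely many shifts and finite direct sums, each preserving pseudo-coherence. This yields pseudo-coherence of $\CL \otimes^{\LL}_{X} \CE$ over all of $X$.

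Next I would establish the boundedness condition. For any $\CM \in \Dbq{Y}$, using associativity of the derived tensor product,
\[
(\CL \otimes^{\LL}_{X} \CE) \otimes^{\LL}_X \LL f^*\CM \;\cong\; \CL \otimes^{\LL}_{X} (\CE \otimes^{\LL}_X \LL f^*\CM).
\]
The complex $\CE \otimes^{\LL}_X \LL f^*\CM$ is bounded by $f$-perfection of $\CE$ and lies in $\D_\qc(X)$. Since $\CL$ is perfect it has finite flat dimension (by \eqref{uno} $\imp$ \eqref{dos} of Theorem~\ref{perfcar}), so tensoring with $\CL$ preserves boundedness. Therefore the whole expression is bounded, which concludes that $\CL \otimes^{\LL}_{X} \CE$ is $f$-perfect.

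For the final clause, assume $f$ has finite flat dimension, which by the remark on $\CO_X$ being $f$-perfect means $\CO_X \in \perf(X\mid Y)$. For any perfect $\CL$ on $X$, the canonical isomorphism $\CL \cong \CL \otimes^{\LL}_X \CO_X$ together with the first part applied to $\CE = \CO_X$ gives that $\CL$ is $f$-perfect. No step here is substantial; the only mild care needed is the local reduction in verifying pseudo-coherence of the tensor product, which is the most "technical" point, though entirely routine.
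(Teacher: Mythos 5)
Your proof is correct and is precisely the routine verification the authors had in mind when they called the lemma ``immediate'' and gave no proof: reduce pseudo-coherence of $\CL \otimes^{\LL}_X \CE$ to the strictly perfect local case, use associativity of the derived tensor to rewrite $(\CL \otimes^{\LL}_X \CE)\otimes^{\LL}_X \LL f^*\CM$, and invoke finite flat dimension of a perfect $\CL$ (Theorem~\ref{perfcar}) to preserve boundedness. The final clause follows by taking $\CE = \CO_X$, exactly as you do.
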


\begin{rem}
 For flat and locally of finite presentation morphism, the analogous result for Lieblich's notion is given in \cite[\href{https://stacks.math.columbia.edu/tag/0DI4}{Tag 0DI4}]{sp}.
\end{rem}

The following theorem is one of our main motivations for this work. It provides a criterion for relative perfection \emph{without} flatness.

\begin{thm}\label{absrelmor} Let $f \colon X \to Y$ be a  morphism of schemes  and let $\CE\in \D(X)_{\pc}$.

\begin{enumerate}[1.]
 \item\label{f-perf1} If $\CE$ is $f$-perfect, then $\R{}f_*\CE$ is of finite flat dimension. The converse also holds if $f$ is affine.
 \item\label{f-perf2} If $f$ is quasi-proper and $\CE$ is $f$-perfect, then $\R{}f_*\CE$ is perfect.
 \end{enumerate}
\end{thm}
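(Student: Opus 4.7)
The plan is to deduce both parts from the projection formula combined with the characterization of perfect complexes in Theorem~\ref{perfcar}.

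For part~\eqref{f-perf1}, fix $\CM \in \Dbq{Y}$. The projection formula recalled in the preliminaries yields
\[
\R{}f_*\CE \otimes^{\LL}_Y \CM \cong \R{}f_*(\CE \otimes^{\LL}_X \LL f^*\CM).
\]
Since $\CE$ is $f$-perfect, the complex $\CE \otimes^{\LL}_X \LL f^*\CM$ is bounded, and $\R{}f_*$ preserves boundedness on $\D_\qc$. Hence $\R{}f_*\CE \otimes^{\LL}_Y \CM$ is bounded, showing that $\R{}f_*\CE$ has finite flat dimension.

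For the converse under the affine hypothesis, I would use that for an affine morphism $f$, the functor $f_*$ is exact and faithful on quasi-coherent sheaves, so $\h^i(\R{}f_*\CG) = f_*\h^i(\CG)$ vanishes if and only if $\h^i(\CG) = 0$ for $\CG \in \D_\qc(X)$. In other words, $\R{}f_*$ reflects boundedness on $\D_\qc(X)$. Applied to $\CG := \CE \otimes^{\LL}_X \LL f^*\CM$, the projection formula together with the assumed finite flat dimension of $\R{}f_*\CE$ gives that $\R{}f_*\CG$ is bounded, whence so is $\CG$, as required.

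For part~\eqref{f-perf2}, note first that $\CE$ is pseudo-coherent by the definition of $f$-perfect, so the hypothesis that $f$ is quasi-proper gives $\R{}f_*\CE \in \D(Y)_{\pc}$. By part~\eqref{f-perf1}, $\R{}f_*\CE$ has finite flat dimension over $Y$; in particular, specializing to $\CM = \CO_Y$, the complex $\R{}f_*\CE$ itself is bounded. Therefore $\R{}f_*\CE$ lies in $\Dbc{Y}$ and has finite flat dimension, so the equivalence \eqref{uno} $\dimp$ \eqref{dos} of Theorem~\ref{perfcar} forces $\R{}f_*\CE$ to be perfect.

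The main subtlety lies in the converse of part~\eqref{f-perf1}: the crucial input is that affineness makes $\R{}f_*$ reflect boundedness on $\D_\qc$, a property that fails for more general quasi-compact maps. The rest of the argument is essentially formal, flowing from the projection formula and the functorial criterion for perfection established in Theorem~\ref{perfcar}.
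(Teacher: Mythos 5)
Your proof is correct and follows the same route as the paper: projection formula plus boundedness of $\R{}f_*$ for the forward direction, reflection of boundedness by $f_*$ in the affine case, and the equivalence \eqref{uno} $\dimp$ \eqref{dos} of Theorem~\ref{perfcar} applied to the pseudo-coherent, finite-flat-dimension complex $\R{}f_*\CE$ for part 2. The only difference is that you spell out two small steps the paper leaves implicit (why an affine $\R{}f_*$ reflects boundedness, and why $\R{}f_*\CE$ is itself bounded so that Theorem~\ref{perfcar} applies), which is harmless.
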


\begin{proof}
\ref{f-perf1}. Let $\CM \in \Dbq{Y}$. By the projection formula, 
\[
\R{}f_*(\CE \otimes^{\LL}_X \LL f^* \CM) \cong \R{}f_*\CE \otimes^{\LL}_Y \CM.
\]
Hence, if $\CE$ is $f$-perfect, we conclude that  $\R{}f_*\CE \otimes^{\LL}_Y \CM$ is bounded because $\R{}f_*$ is bounded over $\D_\qc(X)$. Therefore, $\R{}f_*\CE$ is of finite flat dimension. If $f$ is affine, the boundedness of $\R{}f_*\CE \otimes^{\LL}_Y \CM$ is equivalent to the boundedness of $\CE \otimes^{\LL}_X \LL f^* \CM$, hence the converse also holds. 

\ref{f-perf2}. If $f$ is quasi-proper and $\CE$ is $f$-perfect, then $\R{}f_*\CE$ is of finite flat dimension and pseudo-coherent. By  Theorem \ref{perfcar}, $\R{}f_*\CE$ is perfect.
\end{proof}

\begin{cor}\label{corLN}
With the assumptions of the previous theorem, if we assume further that $f$ is of finite flat dimension, then any perfect complex \,$\CE$ on $X$ is $f$-perfect, hence  $\R{}f_*\CE$ is of finite flat dimension. If moreover $f$ is quasi-proper then $\R{}f_*\CE$ is perfect.
\end{cor}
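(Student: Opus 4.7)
The plan is to chain the previous results together; no new ideas are required, since Corollary \ref{corLN} is essentially a bookkeeping statement that assembles Lemma \ref{tenper} with the two halves of Theorem \ref{absrelmor}.

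First I would establish the claim that every perfect $\CE$ on $X$ is $f$-perfect. The hypothesis that $f$ has finite flat dimension means, by the remark on item \eqref{operfffd} following Definition~\ref{relperfdef}, that $\CO_X$ itself is $f$-perfect. Lemma \ref{tenper} then applies with $\CO_X$ as the $f$-perfect complex and $\CE$ as the perfect one, yielding that $\CE \cong \CE \otimes^{\LL}_X \CO_X$ is $f$-perfect. (In fact, the second sentence of Lemma \ref{tenper} records this implication verbatim, so one may just cite it.)

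Next, invoke Theorem \ref{absrelmor}.\ref{f-perf1}: since $\CE$ is $f$-perfect, $\R{}f_*\CE$ has finite flat dimension over $Y$. This gives the second assertion.

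Finally, under the additional assumption that $f$ is quasi-proper, apply Theorem \ref{absrelmor}.\ref{f-perf2} directly to conclude that $\R{}f_*\CE$ is perfect. No step presents any real obstacle, since the genuine work has already been carried out in proving Theorem \ref{absrelmor}.
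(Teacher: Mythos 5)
Your proof is correct and is precisely the argument the paper leaves implicit: apply Lemma~\ref{tenper} (using that $\CO_X$ is $f$-perfect when $f$ has finite flat dimension) to get $f$-perfection of $\CE$, then invoke parts (\ref{f-perf1}) and (\ref{f-perf2}) of Theorem~\ref{absrelmor}. Nothing to add.
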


The next result is a relative version of the previous Theorem. It will be used on section \ref{bibthy} as a basic building block of the bivariant theory.

\begin{prop}\label{pfper}
 Let $f \colon X \to Y$ and $g \colon Y \to S$ be 
 morphisms with $f$ quasi-proper. If $\CE \in \perf(g \circ f)$, then $\R{}f_* \CE \in \perf(g)$.
\end{prop}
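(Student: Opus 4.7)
The plan is to verify the two defining conditions of $g$-perfection for $\R{}f_*\CE$: pseudo-coherence on $Y$ and the boundedness condition of \ref{relperfdef} with respect to $g$.

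First, pseudo-coherence follows essentially for free. Since $\CE \in \perf(gf) \subset \D(X)_{\pc}$ and $f$ is quasi-proper, the definition of quasi-proper in \ref{qprop} immediately gives $\R{}f_*\CE \in \D(Y)_{\pc}$.

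For the boundedness condition, I would take an arbitrary $\CM \in \Dbq{S}$ and apply the projection formula to the composite $gf$:
\[
\R{}f_*\CE \otimes^{\LL}_Y \LL g^*\CM \;\cong\; \R{}f_*\bigl(\CE \otimes^{\LL}_X \LL f^* \LL g^*\CM\bigr) \;\cong\; \R{}f_*\bigl(\CE \otimes^{\LL}_X \LL(gf)^*\CM\bigr).
\]
Because $\CE$ is $(gf)$-perfect, the complex $\CE \otimes^{\LL}_X \LL(gf)^*\CM$ is bounded, and since $\R{}f_*$ preserves boundedness on $\D_\qc$ (as recalled in the preliminaries), the right-hand side is bounded. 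This establishes the required boundedness for $g$-perfection.

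There is no real obstacle here; the argument is a direct combination of two standard facts invoked in the right order: the definition of quasi-proper maps (to handle pseudo-coherence) and the projection formula together with boundedness of $\R{}f_*$ (to handle the flat-dimension condition relative to $g$). The proposition is really a relative enhancement of part \ref{f-perf1} of Theorem~\ref{absrelmor}, obtained by replacing the absolute complex $\CM \in \Dbq{Y}$ there by a pullback $\LL g^*\CM$ with $\CM \in \Dbq{S}$, and then exploiting $L(gf)^* = Lf^* \circ Lg^*$.
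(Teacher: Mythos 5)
Your proof is correct and takes essentially the same route as the paper's: pseudo-coherence of $\R{}f_*\CE$ from quasi-properness, then the projection formula together with boundedness of $\R{}f_*$ and pseudo-functoriality of $\LL(-)^*$ to verify the boundedness condition for $g$-perfection. Nothing to add.
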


\begin{proof}
First, $\R{}f_*\CE$ is  pseudo-coherent  because $f$ is quasi-proper (\ref{qprop}). Now, let $\CN \in \Dbq{S}$. As $\CE \in \perf(g \circ f)$ it follows that $\CE \otimes^{\LL}_X \LL (g \circ f)^* \CN$ is bounded.  Then $\R{}f_*(\CE \otimes^{\LL}_X \LL f^* \LL g^*  \CN) \cong \R{}f_*\CE \otimes^{\LL}_Y \LL g^*\CN$ is bounded, which means that $\R{}f_*\CE$ is $g$-perfect.
\end{proof}

\begin{cosa} Let $X$ be a  scheme, $U \subset X$ an open subscheme and $j \colon U \inc X$ the canonical embedding. For a given $\CE \in \D_\qc(X)$, we denote
 \[
 \CE_U := \R{}j_*  (\CE|_U)
 \]
 Note that $\CE_U \in \D_\qc(X)$ if $U$ is quasi-compact. Let $f \colon X \to Y$ be a morphism of schemes, and $f|_U \colon U \to Y$ the restriction of $f$ to $U$, \ie $f|_U = f \circ j$. Notice that $$\R{}f_*\CE_U\cong \R{}{f|_U}_*\CE|_U.$$
 \end{cosa}

\begin{prop}\label{prop204}
Let $f \colon X \to Y$ and $p \colon X' \to X$ be morphisms of schemes with   $p$ of finite flat dimension. If $\CE$ is $f$-perfect, then $\LL p^*\CE$ is  $(f \circ p)$-perfect. In particular, for any quasi-compact open subscheme $U$ of $X$, $\CE|_U$ is $f|_U$-perfect and then $\R{}f_*\CE_U$ is of finite flat dimension.
\end{prop}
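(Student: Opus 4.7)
The plan is to verify the two conditions of the definition \ref{relperfdef} for $\LL p^*\CE$ with respect to the composite $f\circ p$, and then to derive the ``in particular'' clause by specializing to the open immersion $j\colon U \hookrightarrow X$ and invoking Theorem \ref{absrelmor}(\ref{f-perf1}).

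For the first claim, I would first check pseudo-coherence: since $\CE \in \D(X)_{\pc}$ and derived inverse images preserve pseudo-coherence (a local property that one checks on affines where $\LL p^*$ sends a bounded-above complex of finite free modules to a bounded-above complex of finite free modules, \cfr \cite[\S 4.3]{yellow}), we have $\LL p^*\CE \in \D(X')_{\pc}$. Next, take any $\CM \in \Dbq{Y}$ and use the compatibility $\LL(f\circ p)^* \cong \LL p^*\LL f^*$ together with the standard identity
\[
\LL p^*\CE \otimes^{\LL}_{X'} \LL(f\circ p)^*\CM \;\cong\; \LL p^*\bigl(\CE \otimes^{\LL}_X \LL f^*\CM\bigr).
\]
The complex inside $\LL p^*$ on the right is bounded by the hypothesis that $\CE$ is $f$-perfect. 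Finally, since $p$ has finite flat dimension, by the remark after \ref{relperfdef} (see part \eqref{operfffd}) the structure sheaf $\CO_{X'}$ is $p$-perfect, which precisely says that $\LL p^*$ carries $\Dbq{X}$ into $\Dbq{X'}$. Hence the displayed complex is bounded, and $\LL p^*\CE$ is $(f\circ p)$-perfect.

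For the second assertion, take $p = j\colon U \inc X$ the open immersion of a quasi-compact open subscheme; then $j$ is flat of flat dimension zero, so in particular of finite flat dimension. The first part applied to $j$ yields that $\CE|_U = \LL j^*\CE$ is $(f\circ j) = f|_U$-perfect. Applying Theorem \ref{absrelmor}(\ref{f-perf1}) to the morphism $f|_U\colon U \to Y$ and the $f|_U$-perfect complex $\CE|_U$, we conclude that $\R(f|_U)_*(\CE|_U)$ is of finite flat dimension. Since $\R f_* \CE_U = \R f_*\R j_*(\CE|_U) \cong \R(f|_U)_*(\CE|_U)$, this gives the required statement.

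No step looks genuinely hard: the whole proof is a formal manipulation of derived functors, with finite flat dimension of $p$ being used precisely to preserve boundedness of $\LL p^*$, and with Theorem \ref{absrelmor}(\ref{f-perf1}) being the only non-trivial input needed for the corollary about $\R f_*\CE_U$.
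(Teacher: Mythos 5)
Your proof is correct and follows essentially the same route as the paper's: the key isomorphism $\LL p^*\CE \otimes^{\LL}_{X'} \LL(f\circ p)^*\CM \cong \LL p^*(\CE \otimes^{\LL}_X \LL f^*\CM)$, boundedness via finite flat dimension of $p$, and Theorem~\ref{absrelmor}(\ref{f-perf1}) for the restriction. You are slightly more careful than the paper in explicitly verifying pseudo-coherence of $\LL p^*\CE$, which the paper leaves implicit.
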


\begin{proof}
Let $\CM \in \Dbq{Y}$. We have that
\[
\LL p^*\CE \otimes^{\LL}_{X'} \LL (f \circ p)^* \CM \cong
\LL p^*(\CE \otimes^{\LL}_{X} \LL f^* \CM).
\]
Now, $\CE \otimes^{\LL}_{X} \LL f^* \CM$ is bounded because $\CE$ is $f$-perfect and keeps being bounded after applying $\LL p^*$ because $p$ is of finite flat dimension. The final statement for a quasi-compact open subscheme $U$ follows from Theorem \ref{absrelmor},(\ref{f-perf1}).
\end{proof}

In the next propositions we show that relative perfection is   local on the base and on the source.

\begin{prop}\label{locchar0}
Let $f \colon X \to Y$ be a morphism of schemes and $\CE \in \D_\qc(X)$. Let $Y = V_1 \cup \dots \cup V_n$ with each $V_i$ a quasi-compact open subscheme of $Y$ and let $f_i \colon f^{-1}V_i \to V_i$ be the maps induced by $f$ for each $i \in \{1, \dots, n\}$. Then $\CE$ is $f$-perfect if, and only if, $\CE|_{f^{-1}V_i}$ is $f_i$-perfect for every $i \in \{1, \dots, n\}$.
\end{prop}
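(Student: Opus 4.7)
The plan is to prove both implications separately, using the fact that pseudo-coherence and boundedness of a complex in $\D_\qc(X)$ are both local properties on $X$, together with compatibility of $\LL f^*$ and $\otimes^\LL$ with restriction to the open subschemes $f^{-1}V_i$. Pseudo-coherence reduces immediately: $\CE \in \D(X)_{\pc}$ if and only if $\CE|_{f^{-1}V_i} \in \D(f^{-1}V_i)_{\pc}$ for each $i$, by the very definition of \ref{pscoh}. So only the flat-dimension condition requires argument.

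For the ``only if'' direction, assume $\CE$ is $f$-perfect. Let $\iota_i \colon V_i \inc Y$ and $j_i \colon f^{-1}V_i \inc X$ denote the canonical open embeddings, so that $\iota_i \circ f_i = f \circ j_i$. Given $\CN \in \Dbq{V_i}$, set $\CM := \R{}\iota_{i*}\CN$, which lies in $\Dbq{Y}$ since $\R{}\iota_{i*}$ is bounded on quasi-coherent complexes. By hypothesis $\CE \otimes^\LL_X \LL f^*\CM$ is bounded, hence so is its restriction along $j_i$. Using flat base change for the open embedding $\iota_i$ (or directly the identity $\LL \iota_i^* \R{}\iota_{i*}\CN \cong \CN$), this restriction identifies with $\CE|_{f^{-1}V_i} \otimes^\LL_{f^{-1}V_i} \LL f_i^*\CN$, which is therefore bounded. (Alternatively, this direction can be read off from Proposition~\ref{prop204} applied to the open embedding $j_i$, once one observes the above identification translating $(f \circ j_i)$-perfection into $f_i$-perfection.)

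For the ``if'' direction, assume each $\CE|_{f^{-1}V_i}$ is $f_i$-perfect. Let $\CM \in \Dbq{Y}$. Then $\CM|_{V_i} = \LL \iota_i^*\CM \in \Dbq{V_i}$, and the restriction of $\CE \otimes^\LL_X \LL f^*\CM$ to $f^{-1}V_i$ equals $\CE|_{f^{-1}V_i} \otimes^\LL_{f^{-1}V_i} \LL f_i^*(\CM|_{V_i})$, which is bounded by hypothesis. Since the finite family $\{f^{-1}V_i\}$ covers $X$ and the cohomology sheaves of a complex in $\D(X)$ vanish if they vanish on every open of a cover, boundedness on each piece implies boundedness globally, as required.

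There is really no single ``hard step''; the only point needing care is checking that the base-change/projection identifications carry $\LL f^*$ restricted along $j_i$ to $\LL f_i^*$ restricted along $\iota_i$ — a routine manipulation, since $\iota_i$ is flat. Everything else is the observation that the definition of $f$-perfection tests against objects of $\Dbq{Y}$, which both restrict to $\Dbq{V_i}$ and arise as $\R{}\iota_{i*}$ of objects in $\Dbq{V_i}$.
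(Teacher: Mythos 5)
Your proof is correct and takes essentially the same route as the paper: localize boundedness to the pieces for the "if" direction, and for the "only if" direction extend a given $\CN \in \Dbq{V_i}$ along $\R{}\iota_{i*}$ to get a test object on $Y$. The only cosmetic difference is that the paper truncates to $\tau^{\leq k}\R{}\iota_{i*}\CN$ (avoiding any appeal to boundedness of $\R{}\iota_{i*}$), whereas you invoke boundedness of the derived pushforward directly; both are fine, and you usefully make explicit the locality of pseudo-coherence, which the paper leaves implicit.
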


\begin{proof}
Let $\CM \in \Dbq{Y}$. If $\CE|_{f^{-1}V_i}$ is $f_i$-perfect for every $i \in \{1, \dots, n\}$, then it holds that $\CE \otimes^{\LL}_X \LL f^*\! \CM$ is bounded because all its restrictions to $f^{-1}V_i$ are. It follows that $\CE$ is $f$-perfect.

Assume now that $\CE$ is $f$-perfect. Denote by $j \colon V_i \inc Y$ the canonical embedding and let $\CN \in \Dbq{V_i}$. Let $r$ be the maximum integer such that $\h^r(\CN) \neq 0$. If $k > r$, the truncated\footnote{For the definition of $\tau^{\leq k}$, see \cite[\S 1.10]{yellow}.} complex $\CM := \tau^{\leq k}\R{}j_* \CN$ is bounded quasi-coherent and such that $\CM|_{V_i} \cong \CN$. As $\CE \otimes^{\LL}_X \LL f^* \CM$ is bounded, it keeps being so when restricted to $f^{-1}V_i$ and this implies that $\CE|_{f^{-1}V_i} \otimes^{\LL}_{V_i} \LL{}f_i^*\CN$ is bounded. Therefore we conclude that $\CE|_{f^{-1}V_i}$ is $f_i$-perfect.
\end{proof}
 
\begin{prop}\label{locchar}
 Let $f \colon X \to Y$ be a  morphism of schemes with $Y$ affine. Let $X = U_1 \cup \dots \cup U_n$ with $U_i$ an affine open subscheme  of $X$. The following conditions are equivalent for $\CE \in \Dbc{X}$:
\begin{enumerate}
 \item\label{locuno} The complex $\CE$ is $f$-perfect.
 \item\label{locdos} For every $i \in \{1, \dots, n\}$, the complex $\CE|_{U_i}$ is   $f|_{U_i}$-perfect.
 \item\label{loctres} For every $i \in \{1, \dots, n\}$, the complex   $\R{}f_*\CE_{U_i}$ is of finite flat dimension.
 \end{enumerate}
\end{prop}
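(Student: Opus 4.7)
The plan is to close a three-way cycle (i) $\Leftrightarrow$ (ii) $\Leftrightarrow$ (iii). The heart of the matter is that the affine hypotheses on $Y$ and the $U_i$ make each $f|_{U_i}\colon U_i\to Y$ an affine morphism, which activates the converse half of Theorem~\ref{absrelmor}(1).

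For (i) $\Rightarrow$ (ii), I apply Proposition~\ref{prop204} to the open immersion $j_i\colon U_i\hookrightarrow X$, which is flat and so of finite flat dimension; since $\CE$ is $f$-perfect, $\CE|_{U_i}=\LL j_i^*\CE$ is $(f\circ j_i)$-perfect, i.e.\ $f|_{U_i}$-perfect. Conversely, for (ii) $\Rightarrow$ (i), fix $\CM\in\Dbq{Y}$; by hypothesis each restriction
\[
(\CE\otimes^{\LL}_X\LL f^*\CM)|_{U_i}\,\cong\,\CE|_{U_i}\otimes^{\LL}_{U_i}\LL(f|_{U_i})^*\CM
\]
is bounded, and since $\{U_i\}$ is a (finite) open cover of $X$ the whole complex $\CE\otimes^{\LL}_X\LL f^*\CM$ is bounded. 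As $\CE$ is pseudo-coherent by assumption, this shows $\CE\in\perf(f)$.

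For the equivalence (ii) $\Leftrightarrow$ (iii), note first that for each $i$ there is the identification $\R{}f_*\CE_{U_i}\cong\R{}(f|_{U_i})_*(\CE|_{U_i})$ recalled just before Proposition~\ref{prop204}. The implication (ii) $\Rightarrow$ (iii) is then direct application of Theorem~\ref{absrelmor}(1) to each restriction $f|_{U_i}$. For (iii) $\Rightarrow$ (ii), I use that both $U_i$ and $Y$ are affine, so $f|_{U_i}\colon U_i\to Y$ is affine; hence the converse clause of Theorem~\ref{absrelmor}(1) applies, converting the finite flat dimension of $\R{}(f|_{U_i})_*(\CE|_{U_i})$ back into $f|_{U_i}$-perfection of $\CE|_{U_i}$ (the required pseudo-coherence of $\CE|_{U_i}$ is inherited from $\CE\in\Dbc{X}$, pseudo-coherence being a local property).

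The step that really needs the affine hypotheses is (iii) $\Rightarrow$ (ii); without them, finite flat dimension downstairs need not detect $f$-perfection upstairs. There is no genuine obstacle beyond assembling these ingredients: the proof is essentially an orchestration of Proposition~\ref{prop204}, Theorem~\ref{absrelmor}(1) and its converse for affine maps, together with the elementary fact that boundedness of a complex in $\D_{\qc}(X)$ can be checked on a finite open cover.
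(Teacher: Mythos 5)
Your proof is correct and matches the paper's argument step for step: (i) $\Rightarrow$ (ii) by Proposition~\ref{prop204} applied to the flat open immersions $j_i$, (ii) $\Rightarrow$ (i) by checking boundedness of $\CE\otimes^{\LL}_X\LL f^*\CM$ locally on the cover, and (ii) $\Leftrightarrow$ (iii) via Theorem~\ref{absrelmor}(1) with its converse activated because $f|_{U_i}\colon U_i\to Y$ is a morphism of affine schemes. The only difference is cosmetic — you spell out the appeal to the identification $\R f_*\CE_{U_i}\cong\R(f|_{U_i})_*(\CE|_{U_i})$ and the affineness of $f|_{U_i}$, which the paper leaves implicit.
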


\begin{proof} 
The implication \eqref{locuno} $\imp$ \eqref{locdos} is given by Proposition \ref{prop204}. The equivalence of \eqref{locdos} and \eqref{loctres} is given by Theorem \ref{absrelmor}, (\ref{f-perf1}). Finally, let us see that \eqref{locdos} $\imp$ \eqref{locuno}. For every $\CM \in \Dbq{Y}$, the complex $\CE \otimes^{\LL}_{X} \LL f^* \CM$ is bounded, since its restriction to each $U_i$ is bounded being $\CE|_{U_i}$ a $f|_{U_i}$-perfect complex.
\end{proof}

\begin{rem}
 For flat and locally of finite presentation morphism, the analogous to the previous two results for Lieblich's notion are given in \cite[\href{https://stacks.math.columbia.edu/tag/0DI2}{Tag 0DI2}]{sp} and \cite[\href{https://stacks.math.columbia.edu/tag/0GEH}{Tag 0GEH}]{sp}.
\end{rem}

\begin{cosa}\textbf{Transverse squares.}\label{transq}
Consider a Cartesian square of schemes
\begin{equation}\label{transverse}
\begin{tikzpicture}[baseline=(current  bounding  box.center)]
\matrix(m)[matrix of math nodes, row sep=2.6em, column sep=2.8em,
text height=1.5ex, text depth=0.25ex]{
  X' & X \\
  S' & S \\};
\path[->,font=\scriptsize,>=angle 90] 
(m-1-1) edge node[auto] {$g'$}
(m-1-2) edge node[left] {$f'$} (m-2-1)
(m-1-2) edge node[auto] {$f$} (m-2-2)
(m-2-1) edge node[auto] {$g$} (m-2-2);
\end{tikzpicture}
\end{equation}
We say that the square is \emph{transverse}, or sometimes \emph{tor}-independent (or just independent as in \cite[\S 3.10]{yellow}) if for all pair of points $s' \in S'$, $x \in X$ such that $s:= g(s') = f(x)$:
\[
\tor_i^{\CO_{S,s}}(\CO_{S',s'}, \CO_{X,x}) = 0, \quad \text{ for all } i > 0.
\]
For a transverse square, base change holds, namely, for $\CE \in \D_\qc(X)$, the base change morphism \cite[Proposition (3.7.2)]{yellow} is an isomorphism
\[
\theta \colon \R{}f'_*\LL{g'}^*\CE \liso \LL g^*\R{}f_*\CE .
\]
 For the proof see \cite[Theorem (3.10.3)]{yellow}.
\end{cosa}

\begin{prop}\label{pbpar}
 
Let
\begin{equation}
\begin{tikzpicture}[baseline=(current  bounding  box.center)]
\matrix(m)[matrix of math nodes, row sep=2.6em, column sep=2.8em,
text height=1.5ex, text depth=0.25ex]{
  X' & X \\
  S' & S \\};
\path[->,font=\scriptsize,>=angle 90] 
(m-1-1) edge node[auto] {$g'$}
(m-1-2) edge node[left] {$f'$} (m-2-1)
(m-1-2) edge node[auto] {$f$} (m-2-2)
(m-2-1) edge node[auto] {$g$} (m-2-2);
\end{tikzpicture}
\end{equation}
be a transverse square. 
If $\CE  \in \D_\qc(X)$ is $f$-perfect, then $\LL{g'}^*\CE$ is $f'$-perfect.
\end{prop}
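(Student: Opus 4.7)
The plan is to verify the two conditions defining $f'$-perfection for $\LL{g'}^*\CE$: pseudo-coherence, and boundedness of $\LL{g'}^*\CE \otimes^{\LL}_{X'} \LL{f'}^*\CM'$ for every $\CM' \in \Dbq{S'}$. Pseudo-coherence is immediate from the classical fact that derived pullback preserves pseudo-coherence (already in SGA6). The essential content is therefore the boundedness condition.

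By Propositions \ref{locchar0} and \ref{locchar}, relative perfection is local both on the base and on the source, so one reduces to an affine model: $S = \spec(A)$, $S' = \spec(A')$, and an affine open $\spec(B)\subset X$ with $f(\spec(B))\subset \spec(A)$, giving the affine patch $\spec(B') = \spec(B \otimes_A A')\subset X'$. Such patches cover $X'$, and it suffices to verify boundedness on each of them.

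On such a patch, the tor-independence hypothesis of the Cartesian square (see \ref{transq}) translates into $\tor_i^A(A',B)=0$ for $i>0$, that is, $B' \cong B \otimes^{\LL}_A A'$ in the derived category. Let $E$ and $M'$ denote the complexes of $B$- and $A'$-modules corresponding to $\CE|_{\spec(B)}$ and $\CM'$, respectively. A standard manipulation with the projection formula and associativity of derived tensor product then yields
\[
(\LL{g'}^*\CE \otimes^{\LL}_{X'} \LL{f'}^*\CM')|_{\spec(B')} \;\cong\; E \otimes^{\LL}_A M'.
\]
Viewed as a bounded complex of $A$-modules via $A \to A'$, the complex $M'$ lies in $\Dbq{\spec(A)}$; and $\CE|_{\spec(B)}$ is $f|_{\spec(B)}$-perfect by Proposition \ref{prop204}. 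Hence the right-hand side is bounded, concluding the verification.

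The main obstacle is the correct use of tor-independence to identify $\LL{g'}^*\CE|_{\spec(B')}$ with $E \otimes^{\LL}_A A'$, rather than with the tautological $E \otimes^{\LL}_B B'$: only this identification allows the $f$-perfection of $\CE$, which controls derived tensor products over $A$ (and not over $B'$), to be brought to bear against the pulled-back term $\LL{f'}^*\CM'$. Once the identification is in place, the remainder is formal.
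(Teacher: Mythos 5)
Your proof is correct, and it takes a genuinely different route from the paper's. After the same reduction to the affine case via Propositions~\ref{locchar0} and~\ref{locchar}, the paper applies Theorem~\ref{absrelmor}\,(\ref{f-perf1}) twice (once to $\CE$ over $f$, once to $\LL{g'}^*\CE$ over $f'$), translating $f'$-perfection into finite flat dimension of $f'_*\LL{g'}^*\CE$; it then invokes the base change isomorphism $f'_*\LL{g'}^*\CE\cong \LL g^*f_*\CE$ for transverse squares and concludes with a projection-formula argument on the base. You, by contrast, bypass Theorem~\ref{absrelmor} entirely and work directly at the module level: the tor-independence of the square enters explicitly as the derived ring isomorphism $B'\cong B\otimes^{\LL}_A A'$, and a short chain of associativities collapses $(\LL{g'}^*\CE\otimes^{\LL}_{X'}\LL{f'}^*\CM')|_{\spec B'}$ to $E\otimes^{\LL}_A M'$, which is bounded by $f$-perfection once $M'$ is restricted along $A\to A'$. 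What your approach buys is transparency about \emph{where} tor-independence is used — the paper hides it inside the base change theorem of \cite[Theorem (3.10.3)]{yellow}; what the paper's approach buys is economy and reuse, as the functorial statement Theorem~\ref{absrelmor}\,(\ref{f-perf1}) and the abstract base change package do all the work without descending to a chart-by-chart tensor computation. One small caution in your write-up: the phrase ``projection formula'' in the sentence before the displayed isomorphism is a slight misnomer — that step is really tor-independence plus associativity of derived tensor product, not a projection formula — but the computation you describe is sound.
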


\begin{proof} By Propositions  \ref{locchar0} and \ref{locchar}, we may assume that $S$, $S'$ and $X$ are affine, and then $f,g,f',g'$ are affine morphisms. By Theorem \ref{absrelmor}, (\ref{f-perf1}),  it is enough to check that $f'_*\LL{g'}^*\CE$  is of finite flat dimension. By  base change  $f'_*\LL{g'}^*\CE\cong \LL g^*f_*\CE$, and $f_*\CE$ is of finite flat dimension, because $\CE$ is $f$-perfect. Then $\LL g^*f_*\CE$ is also of finite flat dimension. Indeed, for any $\CM\in \D_\qc(S')$, $\LL g^*f_*\CE\otimes_{S'}^{\LL}\CM$ is bounded because it is bounded after applying $g_*$, by the projection formula.
\end{proof}

\begin{rem}
 For a flat and locally of finite presentation morphism $f$, the previous result for Lieblich's notion is given in \cite[\href{https://stacks.math.columbia.edu/tag/0DI5}{Tag 0DI5}]{sp}.
 \end{rem}

\begin{prop} \label{perffib}
 Let $X$ be a   scheme and $f \colon X \to S$ be a flat  
 morphism with regular fibres (\textit{e.g.} a smooth morphism). For $\CE \in \Dbc{X}$, the following are equivalent:
 
\begin{enumerate}
 \item \label{lisuno} The complex $\CE$ is perfect.
 \item \label{lisdos} The complex $\CE$ is $f$-perfect.
 \item \label{listres} For all $s \in S$, $\LL{}l_s^*\CE$ is bounded, with $l_s \colon X_s \to X$ the canonical map from the fiber $X_s := f^{-1}(\{s\})$.
\end{enumerate}
\end{prop}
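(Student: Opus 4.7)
The plan is to cycle through the three implications, with the last one being the real content.

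For (\ref{lisuno}) $\imp$ (\ref{lisdos}), since $f$ is flat the functor $\LL f^{*}$ coincides with $f^{*}$ and so preserves boundedness; thus $\CO_{X}$ is of finite flat dimension for $f$, i.e., $f$ itself is of finite flat dimension. Lemma~\ref{tenper} then immediately gives that any perfect complex on $X$ is $f$-perfect.

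For (\ref{lisdos}) $\imp$ (\ref{listres}), I would contemplate the fibre square
\[
\begin{tikzpicture}[baseline=(current bounding box.center)]
\matrix(m)[matrix of math nodes, row sep=2.2em, column sep=2.4em,
text height=1.5ex, text depth=0.25ex]{
  X_{s} & X \\
  \spec(\kappa(s)) & S. \\};
\path[->,font=\scriptsize,>=angle 90]
(m-1-1) edge node[auto] {$l_{s}$} (m-1-2)
        edge node[left] {$f_{s}$} (m-2-1)
(m-1-2) edge node[auto] {$f$} (m-2-2)
(m-2-1) edge node[auto] {$i_{s}$} (m-2-2);
\end{tikzpicture}
\]
Since $f$ is flat, the $\tor$-independence condition of \ref{transq} holds, so this is a transverse square. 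Proposition~\ref{pbpar} then yields that $\LL l_{s}^{*}\CE$ is $f_{s}$-perfect on $X_{s}$; in particular (taking $\CM=\CO_{\spec(\kappa(s))}$ in Definition~\ref{relperfdef}) it is bounded, which is (\ref{listres}).

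The main point, and the expected obstacle, is (\ref{listres}) $\imp$ (\ref{lisuno}). My plan is to verify condition~(\ref{cuatro}) of Theorem~\ref{perfcar}, i.e.\ that $\LL i_{x}^{*}\CE$ is bounded for every $x\in X$. Fix such $x$, set $s:=f(x)$, and observe that $x$ determines a point $x'\in X_{s}$ with $\kappa(x')=\kappa(x)$, and that $i_{x}$ factors as $l_{s}\circ i'_{x'}$ where $i'_{x'}\colon \spec(\kappa(x))\to X_{s}$. The complex $\LL l_{s}^{*}\CE$ is pseudo-coherent (pseudo-coherence is stable under arbitrary derived pullback) and, by hypothesis, bounded; hence $\LL l_{s}^{*}\CE\in \Dbc{X_{s}}$. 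Because $f$ has regular fibres and is (in the smooth case, or by assumption) of finite presentation on each fibre, $X_{s}$ is a regular (hence Noetherian) scheme, so Corollary~\ref{cor105} upgrades $\LL l_{s}^{*}\CE$ to a perfect complex on $X_{s}$. Applying Theorem~\ref{perfcar}\,(\ref{cuatro}) on $X_{s}$ to the point $x'$ yields that $\LL {i'_{x'}}^{*}\LL l_{s}^{*}\CE\cong \LL i_{x}^{*}\CE$ is bounded. Since $x$ was arbitrary, Theorem~\ref{perfcar} gives that $\CE$ is perfect on $X$.

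The only subtle point is the verification that the fibre $X_{s}$ is a \emph{Noetherian} regular scheme so that Corollary~\ref{cor105} applies; this is where the hypothesis that the fibres are regular (and not merely that $f$ is, say, flat with fibres whose local rings are regular) is being used. Everything else amounts to a careful combination of transverse base change (Proposition~\ref{pbpar}), fibre-wise detection of perfectness (Theorem~\ref{perfcar}), and the fact that boundedness plus pseudo-coherence equals perfectness over a regular scheme (Corollary~\ref{cor105}).
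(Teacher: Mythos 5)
Your proof is correct, and for the essential implication (\ref{listres}) $\imp$ (\ref{lisuno}) it is the same argument as the paper's: factor $i_x$ through $X_s$, observe $\LL l_s^*\CE$ is bounded and pseudo-coherent, invoke Corollary~\ref{cor105} on the regular scheme $X_s$ to upgrade it to perfect, then pull back further. Your explicit remark that pseudo-coherence is preserved by derived pullback fills a step the paper leaves tacit; likewise your clarification that ``regular fibres'' means Noetherian schemes with regular local rings is exactly what the paper's definition of regular scheme (preceding Corollary~\ref{cor105}) supplies, so it is not a genuine subtlety.

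The one place you deviate is (\ref{lisdos}) $\imp$ (\ref{listres}). You invoke transverse base change (Proposition~\ref{pbpar}) to conclude that $\LL l_s^*\CE$ is $f_s$-perfect, hence bounded. The paper instead computes directly: since $f$ is flat, $l_{s*}\LL l_s^*\CE \cong \CE \otimes^{\LL}_X l_{s*}\CO_{X_s} \cong \CE\otimes^{\LL}_X f^*\CK(s)$, and the last object is bounded because $\CE$ is $f$-perfect and $\CK(s) \in \Dbq{S}$; then $\LL l_s^*\CE$ is bounded since $l_s$ is affine. Both routes are valid; the paper's is more elementary (projection formula only), while yours leans on Proposition~\ref{pbpar}, which itself is proved using reduction to affines and base change. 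Nothing is gained or lost mathematically, but the direct identification of $l_{s*}\LL l_s^*\CE$ with the nice derived fiber is thematically in line with the rest of the paper (cf.~\ref{niderfib}), so the authors' choice is the more economical one here.
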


\begin{proof}
The implication \eqref{lisuno} $\imp$ \eqref{lisdos} is a consequence of $f$ being flat. For \eqref{lisdos} $\imp$ \eqref{listres} it is enough to consider (with the notation of \ref{notres}) that $$l_{s *}\LL{}l_s^*\CE\cong  \CE \otimes^{\LL}_X l_{s *}\CO_{X_s}\cong \CE \otimes^{\LL}_X f^*\! \CK(s)  $$ where the second isomorphism is again due to the flatness of $f$.

Let us see \eqref{listres} $\imp$ \eqref{lisuno}. Let $x \in X$ and denote, as before, by $i_x \colon \spec(\kappa(x)) \to X$ the canonical morphism. By Theorem \ref{perfcar}, \eqref{cuatro} $\imp$ \eqref{uno}, it is enough to see that $\LL{}i_x^* \CE$ is bounded. Let $s = f(x)$ and  $l_x \colon \spec(\kappa(x)) \to X_s$ the canonical morphism. Notice that $i_x = l_s\circ l_x$. By assumption, $\LL{}l_s^*\CE$ is bounded. Since $X_s$ is regular, $\LL{}l_s^*\CE$ is perfect by Cororally \ref{cor105}. It follows that $\LL{}i_x^* \CE = \LL{}l_x^*\LL{}l_s^*\CE$ is bounded as wanted.
\end{proof}

\begin{rem}
 For a locally of finite presentation morphism $f$, the equivalence \eqref{lisdos} $\dimp$ \eqref{listres} for Lieblich's notion is given in \cite[\href{https://stacks.math.columbia.edu/tag/0GEH}{Tag 0GEH}]{sp}.
 \end{rem}

\section{Relatively perfection for quasi-proper morphisms}\label{sec6}

In this section we make a more exhaustive study of the behavior of relative perfect complexes with respect to quasi-proper maps.

\begin{prop} \label{consprop}
 Let $q \colon X \to \overline{X}$ be a quasi-proper morphism of $S$-schemes. Let $f \colon X \to S$ and $\overline{f} \colon \overline{X} \to S$ be the structure maps so that $\overline{f} \circ q = f$. Let $\CE \in \Dbc{X}$.
\begin{enumerate}[1.]
 \item If $\CE$ is $f$-perfect, then $\R{}q_*\CE$ is $\overline{f}$-perfect.
 \item Conversely, if $q$ is a finite morphism and $q_*\CE$ is $\overline{f}$-perfect, then $\CE$ is $f$-perfect. 
\end{enumerate}
\end{prop}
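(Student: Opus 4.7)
The two parts are largely independent and each reduces to results already in place.

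For (1), I would simply invoke Proposition \ref{pfper}. The setup $X \xrightarrow{q} \overline{X} \xrightarrow{\overline{f}} S$ with $q$ quasi-proper and $\overline{f}\circ q = f$ fits that proposition verbatim; reading $f$-perfection of $\CE$ as $\CE \in \perf(\overline{f}\circ q)$, Proposition \ref{pfper} delivers $\R{}q_*\CE \in \perf(\overline{f})$, i.e.\ $\R{}q_*\CE$ is $\overline{f}$-perfect.

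For (2), the plan is to combine the projection formula with the reflection of boundedness across an affine direct image. Given $\CM \in \Dbq{S}$, the factorization $\LL f^*\CM \cong \LL q^*\LL \overline{f}^*\CM$ together with the projection formula yields
\[
\R{}q_*\bigl(\CE \otimes^{\LL}_X \LL f^*\CM\bigr) \cong \R{}q_*\CE \otimes^{\LL}_{\overline{X}} \LL \overline{f}^*\CM.
\]
Since $q$ is finite, it is in particular affine, so $\R{}q_* = q_*$, and the right-hand side becomes $q_*\CE \otimes^{\LL}_{\overline{X}} \LL \overline{f}^*\CM$, which is bounded by the $\overline{f}$-perfection of $q_*\CE$.

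It then remains to descend boundedness across $q_*$: for $q$ affine, $q_*$ is exact and faithful on quasi-coherent sheaves, so $\h^i(q_*\CF) \cong q_*\h^i(\CF)$ vanishes iff $\h^i(\CF)$ does; applied to $\CF := \CE \otimes^{\LL}_X \LL f^*\CM \in \D_\qc(X)$, this forces $\CF$ to be bounded. Pseudo-coherence of $\CE$ is part of the hypothesis $\CE \in \Dbc{X}$, so $\CE$ is indeed $f$-perfect. There is no real obstacle here; the main conceptual step is part (1), while (2) is essentially careful bookkeeping with the projection formula, the key ingredient being that finite morphisms are affine so that both $\R{}q_* = q_*$ and $q_*$ reflects boundedness on $\D_\qc(X)$.
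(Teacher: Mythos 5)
Your proof is correct and matches the paper's argument: both parts rest on the projection formula $\R{}q_*(\CE\otimes^{\LL}_X\LL f^*\CM)\cong \R{}q_*\CE\otimes^{\LL}_{\overline X}\LL\overline f^*\CM$, with part (2) using that a finite morphism is affine so $q_*$ is exact, conservative on quasi-coherent complexes, and hence reflects boundedness. The only difference is cosmetic: you invoke Proposition~\ref{pfper} for part (1), whereas the paper re-runs the same projection-formula computation inline; since \ref{pfper} appears earlier in the paper this is a legitimate and slightly cleaner shortcut.
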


\begin{proof}
Let $\CM \in \Dbq{S}$. By the projection formula, it holds that
\[
\R{}q_*\CE \otimes^{\LL}_{\overline{X}}   \LL\overline{f}^* \CM \cong
\R{}q_*(\CE \otimes^{\LL}_{X} \LL{}f^*\CM)
\]
Since  $\CE$ is $f$-perfect, we conclude that $\R{}q_*\CE \otimes^{\LL}_{\overline{X}}\LL\overline{f}^* \CM  $ is bounded, i.e, $\R{}q_*\CE$ is $\overline{f}$-perfect. 

If $q$ is moreover finite, then it is also affine. In order to see that $\CE \otimes^{\LL}_{X} \LL f^* \CM$ is bounded, it is enough to check that $q_*(\CE \otimes^{\LL}_{X} \LL{}f^*\CM)$ is bounded. As before, this last complex is isomorphic to $q_*\CE \otimes^{\LL}_{\overline{X}} \LL\overline{f}^* \CM$ which is bounded by assumption.
\end{proof}

\begin{cor}\label{agreement}
Let $f \colon X \to S$ be a 
morphism of schemes such that there is a factorization $f = p \circ i$ where $i \colon X \inc  \overline{X}$ is a closed embedding such that $i_*\CO_X$ is pseudo-coherent on $\overline{X}$ and $p \colon \overline{X} \to S$ is a smooth   morphism. Then:
\begin{equation*}
 \CE \text{ is $f$-perfect } \ldimp i_*\CE \text{ is perfect }
\end{equation*}
\end{cor}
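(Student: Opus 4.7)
The plan is to prove the equivalence by treating the two implications separately, using the structural results already established in the paper.

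For the implication $(\Rightarrow)$, the key observation is that the closed embedding $i$ is quasi-proper. Indeed, $i$ is proper (closed embeddings are), and it is pseudo-coherent because $i_*\CO_X$ is pseudo-coherent on $\overline{X}$; the remark in \ref{qprop} then gives that $i$ is quasi-proper. Since $f = p \circ i$ and $\CE$ is $f$-perfect, I would apply Proposition \ref{pfper} to the composition $X \xrightarrow{i} \overline{X} \xrightarrow{p} S$ with $i$ the quasi-proper morphism, obtaining that $\R{}i_*\CE = i_*\CE$ is $p$-perfect. Finally, $p$ is smooth, hence flat with regular (in fact smooth) fibres, so Proposition \ref{perffib} equates $p$-perfection with (absolute) perfection; this yields that $i_*\CE$ is perfect on $\overline{X}$.

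For the implication $(\Leftarrow)$, assume $i_*\CE$ is perfect on $\overline{X}$. First I need to check that $\CE$ lies in $\D(X)_{\pc}$, which is built into the definition of $f$-perfect. This follows from the standard fact that for a closed embedding $i$ with $i_*\CO_X$ pseudo-coherent, a complex on $X$ is pseudo-coherent if and only if its pushforward is pseudo-coherent on $\overline{X}$; here $i_*\CE$ is perfect and in particular pseudo-coherent. Now let $\CM \in \Dbq{S}$. Since $i$ is a closed embedding, $i_*$ is exact and conservative on quasi-coherent complexes, so boundedness of $\CE \otimes^{\LL}_X \LL f^*\CM$ can be tested after applying $i_*$. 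By the projection formula and the flatness of the smooth map $p$,
\[
i_*\bigl(\CE \otimes^{\LL}_X \LL f^*\CM\bigr)
\;\cong\; i_*\bigl(\CE \otimes^{\LL}_X \LL i^*\, p^*\CM\bigr)
\;\cong\; i_*\CE \otimes^{\LL}_{\overline{X}} p^*\CM .
\]
Since $i_*\CE$ is perfect on $\overline{X}$, it has finite flat dimension, so tensoring with the bounded complex $p^*\CM$ produces a bounded complex. Hence $\CE \otimes^{\LL}_X \LL f^*\CM$ is bounded, proving that $\CE$ is $f$-perfect.

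The potentially delicate step is the pseudo-coherence of $\CE$ from that of $i_*\CE$ in the backward direction; this is standard for closed immersions with pseudo-coherent structure sheaf, but it is the one point where I need a result from outside the paper. The remaining computations (using quasi-properness of $i$, Proposition \ref{pfper}, Proposition \ref{perffib}, and the projection formula) are essentially assembly of facts already proved in earlier sections, so the real content of the corollary is recognizing that the SGA6-style criterion fits naturally into the pipeline developed in \S\ref{sec4}--\S\ref{sec6}.
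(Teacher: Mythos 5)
Your proof is correct and essentially matches the paper's argument: the paper also cites the Illusie lemma for transferring pseudo-coherence across the closed immersion and concludes with Proposition \ref{perffib}. The only difference is economy of packaging — the paper invokes Proposition \ref{consprop} to get both directions of ``$\CE$ is $f$-perfect $\Leftrightarrow$ $i_*\CE$ is $p$-perfect'' at once (part~1 for the forward direction via quasi-properness, part~2 for the converse using that $i$ is finite), whereas you obtain the forward direction from Proposition \ref{pfper} and then rederive the converse by hand with the projection formula and conservativity of $i_*$, which is precisely the content of Proposition \ref{consprop}(2).
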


\begin{proof}
 Notice that $\CE$ is bounded pseudo-coherent if, and only if, $i_*\CE$ is bounded pseudo-coherent  (\cite[Lemme 1.1.1]{finrel}). By Proposition \ref{consprop}, $\CE$ is $f$-perfect if, and only if, $i_*\CE$ is $p$-perfect. We conclude by  Proposition \ref{perffib}.
\end{proof}

\begin{rem}
The previous Corollary, together with \cite[Proposition 4.4]{finrel}, shows that our definition of $f$-perfection agrees with the original one given in \cite[D\'efinition 4.1]{finrel}, when $f$ is a pseudo-coherent morphism, so there is no conflict of terminology. We have  used our characterization to begin with because it allows us to obtain the basic properties of relative perfect complexes in an easier way and in a very general setting.
\end{rem}

\begin{thm}\label{carrelperf}
Let $f \colon X \to Y$ be quasi-proper morphism and let $\CE \in \Dbc{X}$. The following are equivalent:
 
\begin{enumerate}
 \item \label{rpuno} $\CE$ is $f$-perfect.
 \item \label{rpdos} $\CE \otimes^{\LL}_{X} \LL{}f^*\!\CK(y)$ is bounded for all $y \in Y$.
 \item \label{rptres} $\R{}f_*(\CE \otimes^{\LL}_{X} \CL)$ is perfect for all $\CL \in \perf(X)$.
\end{enumerate}
If further $f$ is flat, then $\CE$ is $f$-perfect if, and only if, $\LL{}l_y^*\CE$ is bounded for all $y \in Y$ with $l_y \colon X_y \to X$ the canonical map from the fiber $X_y := f^{-1}(\{y\})$.
\end{thm}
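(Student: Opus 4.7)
The plan is to prove the three-way equivalence via the cycle (\ref{rpuno}) $\Rightarrow$ (\ref{rpdos}) $\Rightarrow$ (\ref{rptres}) $\Rightarrow$ (\ref{rpuno}), and then handle the flat case separately using transverse base change. The implication (\ref{rpuno}) $\Rightarrow$ (\ref{rpdos}) is immediate from the definition of $f$-perfection in \ref{relperfdef}, since $\CK(y) \in \Dbq{Y}$ (being the pushforward of a sheaf on a point).

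For (\ref{rpdos}) $\Rightarrow$ (\ref{rptres}): given a perfect complex $\CL$ on $X$, I first observe that $\CE \otimes^{\LL}_X \CL \in \Dbc{X}$ (bounded pseudo-coherent, as the tensor of bounded pseudo-coherent with perfect). Since $f$ is quasi-proper and $\R{}f_*$ is bounded on $\D_\qc$, the pushforward $\R{}f_*(\CE \otimes^{\LL}_X \CL)$ lies in $\Dbc{Y}$. To upgrade it to perfect, I invoke Theorem \ref{perfcar}, (\ref{tres}) $\Rightarrow$ (\ref{uno}): I need to check that $\R{}f_*(\CE \otimes^{\LL}_X \CL) \otimes^{\LL}_Y \CK(y)$ is bounded for every $y \in Y$. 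By the projection formula this equals $\R{}f_*\bigl(\CL \otimes^{\LL}_X (\CE \otimes^{\LL}_X \LL f^*\CK(y))\bigr)$; the inner tensor is bounded by (\ref{rpdos}), tensoring by the perfect $\CL$ preserves this, and boundedness of $\R{}f_*$ on $\D_\qc$ finishes the argument.

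For (\ref{rptres}) $\Rightarrow$ (\ref{rpuno}), which I expect to be the key step, the idea is to exploit Theorem \ref{biglem2011} to reduce boundedness of $\CE \otimes^{\LL}_X \LL f^*\CM$ to the boundedness of certain Hom complexes. Given $\CM \in \Dbq{Y}$ and a perfect $\CF$ on $X$, using strong dualizability (\ref{perfdual}) together with the projection formula I get the chain
\[
\rhom^\bullet_X(\CF, \CE \otimes^{\LL}_X \LL f^*\CM) \cong \R\Gamma\bigl(Y, \R{}f_*(\CF^\vee \otimes^{\LL}_X \CE) \otimes^{\LL}_Y \CM\bigr).
\]
By assumption (\ref{rptres}) applied with $\CL = \CF^\vee$, the complex $\R{}f_*(\CF^\vee \otimes^{\LL}_X \CE)$ is perfect, hence of finite flat dimension, so its tensor with the bounded $\CM$ remains bounded; then $\R\Gamma(Y,-)$ is bounded on $\D_\qc(Y)$ because $Y$ is concentrated. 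This shows that $\rhom^\bullet_X(\CF, \CE \otimes^{\LL}_X \LL f^*\CM) \in \D^{\bb}(\ZZ)$ for every perfect $\CF$, so Theorem \ref{biglem2011} (applied to a perfect generator) gives that $\CE \otimes^{\LL}_X \LL f^*\CM$ is bounded, as required.

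For the final flat statement, I would use that when $f$ is flat the Cartesian diagram defining the fiber $X_y$ is transverse (\ref{transq}), so $\LL f^*\CK(y) \cong l_{y*}\CO_{X_y}$ and by the projection formula $\CE \otimes^{\LL}_X \LL f^*\CK(y) \cong l_{y*}\LL l_y^*\CE$. Since $l_y \colon X_y \to X$ is affine (as the base change of the affine map $i_y$), $l_{y*}$ is exact and detects boundedness, so condition (\ref{rpdos}) is equivalent to $\LL l_y^*\CE$ being bounded for all $y$. The main obstacle in the whole argument is the (\ref{rptres}) $\Rightarrow$ (\ref{rpuno}) direction, which relies crucially on the boundedness criterion of Theorem \ref{biglem2011}; this is precisely why the authors developed it in the preliminaries.
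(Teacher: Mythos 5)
Your proof is correct and follows essentially the same route as the paper's: (i)$\Rightarrow$(ii) by definition, (ii)$\Rightarrow$(iii) via the projection formula and Theorem \ref{perfcar}, (iii)$\Rightarrow$(i) via strong dualizability and the projection formula to reduce to the boundedness criterion (the paper cites Proposition~\ref{lem2011}, you cite Theorem~\ref{biglem2011} — either applies since you check all perfect $\CF$), and the flat case via $\LL f^*\CK(y)\cong l_{y*}\CO_{X_y}$ and exactness of $l_{y*}$. The only addition beyond the paper's text is your explicit justification that $\R{}f_*(\CE\otimes^{\LL}_X\CL)\in\Dbc{Y}$ before invoking Theorem~\ref{perfcar}, which is a sensible clarification.
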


\begin{proof}
 The implication \eqref{rpuno} $\imp$ \eqref{rpdos} is clear. Let us see \eqref{rpdos} $\imp$ \eqref{rptres}. If $\CL$ is perfect, then the complex $\CE \otimes^{\LL}_{X} \CL \otimes^{\LL}_{X} \LL{}f^*\!\CK(y)$ is bounded. Applying $\R{}f_*$ and the projection formula, we conclude that $\R{}f_*(\CE \otimes^{\LL}_{X} \CL) \otimes^{\LL}_{Y} \CK(y)$ is bounded too. As a consequence, $\R{}f_*(\CE \otimes^{\LL}_{X} \CL)$ is perfect by Theorem \ref{perfcar}.
 
Finally, \eqref{rptres} $\imp$ \eqref{rpuno}. Let $\CL \in \perf(X)$ and $\CM \in \Dbq{Y}$. We have the following isomorphisms
\begin{align*}
 \R{}f_*\R\!\shom_X^\bullet(\CL\!\!,\, \CE \otimes^{\LL}_X \LL{}f^* \CM) 
         &\cong \R{}f_*(\CL^{\vee}\!\!\otimes^{\LL}_X \CE \otimes^{\LL}_X \LL{}f^* \CM) \tag{\ref{perfdual}}\\
        &\cong  \R{}f_*(\CL^{\vee}\!\!\otimes^{\LL}_X \CE) \otimes^{\LL}_Y \CM \tag{projection} 
\end{align*}
and this last complex is bounded because $\R{}f_*(\CL^{\vee}\!\!\otimes^{\LL}_X \CE)$ is perfect by hypothesis. Now, $\R\Gamma(Y,-)$ is bounded, therefore the complex $\R\!\Hom_X^\bullet(\CL\!\!,\, \CE \otimes^{\LL}_X \LL{}f^* \CM)$ is bounded for all $\CL \in \perf(X)$ . By Proposition \ref{lem2011}, $\CE \otimes^{\LL}_X \LL{}f^* \CM$ is bounded, \ie\, $\CE$ is $f$-perfect.

If $f$ is flat, then $\LL{}f^*\CK(y) = l_{y *}\CO_{X_y}$ therefore $\CE \otimes^{\LL}_{X} \LL{}f^*\!\CK(y) = l_{y *}\LL{}l_y^*\CE$, which is bounded if, and only if, so $\LL{}l_y^*\CE$ is.
\end{proof}

\begin{rem} 
\begin{enumerate}[(a)]
 \item The previous Theorem may be seen as a relative version of Theorem \ref{perfcar} \eqref{uno} $\imp$ \eqref{tres} and as a sort of converse of Theorem \ref{absrelmor}.
 \item If $f\colon X\to Y$ is projective and $\CO_X(1)$ is a relatively very ample invertible sheaf on $X$, then condition (iii) can be weakened to the following:   $\R{}f_*(\CE \otimes_{X} \CO_X(r))$ is perfect for all $r\in \ZZ$. Indeed, the same proof \eqref{rptres} $\imp$ \eqref{rpuno} works by replacing Proposition \ref{lem2011} by the  Remark after Theorem \ref{biglem2011}.  See also  \cite[Proposition 1.6]{fmtgs}.
\end{enumerate}
\end{rem}

\begin{cor}
 Let $f \colon X \to Y$ be a quasi-proper
morphism of schemes. Then $f$ has finite flat dimension if and only if the functor $\R{}f_*$ takes perfect complexes to perfect complexes.  
\end{cor}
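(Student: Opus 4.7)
The plan is to deduce both implications from results already established, taking $\CE = \CO_X$ as the test object.

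The forward direction is essentially immediate from Corollary \ref{corLN}. If $f$ has finite flat dimension and $\CL \in \perf(X)$, then $\CL$ is $f$-perfect, and since $f$ is quasi-proper, $\R{}f_*\CL$ is perfect. So $\R{}f_*$ sends perfect complexes to perfect complexes.

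For the converse, I would apply Theorem \ref{carrelperf} with $\CE = \CO_X$. Note that $\CO_X \in \Dbc{X}$ trivially (it is a bounded complex of free modules), so the theorem applies. The equivalence \eqref{rpuno} $\Leftrightarrow$ \eqref{rptres} reads: $\CO_X$ is $f$-perfect if and only if $\R{}f_*(\CO_X \otimes^{\LL}_X \CL) \cong \R{}f_*\CL$ is perfect for every $\CL \in \perf(X)$. The hypothesis on $\R{}f_*$ is precisely condition \eqref{rptres} applied to $\CO_X$, hence $\CO_X$ is $f$-perfect. By the definition recalled in \ref{finflat} (and the remark following \ref{relperfdef}, item \eqref{operfffd}), this is exactly the statement that $f$ has finite flat dimension.

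There is no real obstacle here; the content of the corollary sits inside Theorem \ref{carrelperf} and Corollary \ref{corLN}. The only small point worth checking is that $\CO_X$ legitimately lies in $\Dbc{X}$, so that Theorem \ref{carrelperf} is applicable with this choice, and that the projection formula gives $\R{}f_*(\CO_X \otimes^{\LL}_X \CL) \cong \R{}f_*\CL$, making the hypothesis match condition \eqref{rptres} on the nose.
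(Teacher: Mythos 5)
Your proof is correct and takes essentially the same approach as the paper: the paper simply applies the equivalence (i) $\Leftrightarrow$ (iii) of Theorem~\ref{carrelperf} to $\CE = \CO_X$ in one stroke (which gives both directions), while you split the argument, invoking Corollary~\ref{corLN} for the forward direction and Theorem~\ref{carrelperf} for the converse. Since Corollary~\ref{corLN} is itself a consequence of the same circle of ideas, this is only a cosmetic difference.
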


\begin{proof} Apply Theorem \ref{carrelperf}, \eqref{rpuno} $\dimp$ \eqref{rptres},  to $\CE=\CO_X$.
\end{proof}

\begin{rem}
 The previous result gives an alternate proof of the equivalence between (i) and (ii) of Theorem 1.2. in \cite{LN} for a quasi-proper map, since a quasi-perfect morphism $f$ (\cite[Definition 1.1]{LN}) is characterized as a morphism such that  $\R{}f_*$ takes perfect complexes to perfect complexes (\cite[Proposition 2.1]{LN}). Also, it gives the equivalence of our definition of finite flat dimension with the usual one of finite tor-dimension at least for proper maps. 
\end{rem}

\begin{cor}\label{properoverregular} Let $f \colon X \to Y$ be quasi-proper, with $Y$ a regular scheme. Then, any $\CE \in \Dbc{X}$ is $f$-perfect, hence,  \[ \Dbc{X}=\perf(f).\]
\end{cor}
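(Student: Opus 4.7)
The plan is to apply Theorem~\ref{carrelperf}, specifically the implication \eqref{rptres} $\imp$ \eqref{rpuno}. One inclusion $\perf(f) \subseteq \Dbc{X}$ is immediate from the definition \ref{relperfdef} of $f$-perfection, so the content is the reverse inclusion: every $\CE \in \Dbc{X}$ must be shown to be $f$-perfect.

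To this end, fix an arbitrary perfect complex $\CL \in \perf(X)$; by Theorem~\ref{carrelperf}, it suffices to verify that $\R{}f_*(\CE \otimes^{\LL}_X \CL)$ is perfect on $Y$. First, $\CE \otimes^{\LL}_X \CL$ is bounded (both factors are) and pseudo-coherent: locally $\CL$ is a strictly perfect complex, so tensoring by it preserves pseudo-coherence, and thus $\CE \otimes^{\LL}_X \CL \in \Dbc{X}$.

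Next, invoke the hypothesis that $f$ is quasi-proper: by \ref{qprop}, the functor $\R{}f_*$ carries pseudo-coherent complexes to pseudo-coherent ones, and it is bounded on $\D_\qc$ (a standing fact recalled at the beginning of the paper). Therefore $\R{}f_*(\CE \otimes^{\LL}_X \CL) \in \Dbc{Y}$. Since $Y$ is regular, Corollary~\ref{cor105} yields $\Dbc{Y} = \perf(Y)$, so $\R{}f_*(\CE \otimes^{\LL}_X \CL) \in \perf(Y)$. As this holds for every $\CL \in \perf(X)$, the equivalence $\eqref{rptres} \imp \eqref{rpuno}$ in Theorem~\ref{carrelperf} shows that $\CE$ is $f$-perfect, completing the proof.

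There is no real obstacle: the statement is essentially a harvesting of three pieces of machinery assembled above — the quasi-properness of $f$ to control pseudo-coherence under $\R{}f_*$, the regularity criterion \ref{cor105} to upgrade bounded pseudo-coherence to perfection on $Y$, and the characterization of relative perfection via condition \eqref{rptres} of Theorem~\ref{carrelperf}. The only point requiring a moment of thought is the stability of pseudo-coherence under tensoring with a perfect complex, which is a standard local verification.
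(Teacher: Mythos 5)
Your proof is correct and follows essentially the same route as the paper: verify that $\R{}f_*(\CE \otimes^{\LL}_X \CL)$ lies in $\Dbc{Y}$ for every perfect $\CL$ using quasi-properness and boundedness of $\R{}f_*$, upgrade to perfection via Corollary~\ref{cor105} since $Y$ is regular, and conclude by Theorem~\ref{carrelperf}, \eqref{rptres}~$\imp$~\eqref{rpuno}. Your writeup is merely more explicit about the stability of $\Dbc{X}$ under tensoring with perfect complexes, a point the paper takes as understood.
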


\begin{proof} Assume $\CE$ is bounded and pseudo-coherent. For any $\CL \in \perf(X)$, the complex $\R{}f_*(\CE \otimes^{\LL}_{X} \CL)$ is bounded and pseudo-coherent, and then perfect by Corollary \ref{cor105}. Hence $\CE$ is $f$-perfect by Theorem \ref{carrelperf}, \eqref{rptres} $\imp$ \eqref{rpuno}. 
\end{proof}

\begin{prop}\label{comprodrel}
Let $f \colon X \to Y$ be a quasi-proper map of schemes  and let $\CE \in \Dbc{X}$. Then the complex $\CE$ is $f$-perfect if, and only if, the functor $\CE \otimes^{\LL}_{X} \LL{}f^*(-)$ commutes with products. 
\end{prop}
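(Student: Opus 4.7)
The plan is to bootstrap the absolute statement (Proposition \ref{comprod}) to the relative one via Theorem \ref{carrelperf}, which characterizes $f$-perfection by the perfection of the complexes $\R f_*(\CE \otimes^{\LL}_X \CL)$ for $\CL\in\perf(X)$.

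For the forward direction, assume $\CE$ is $f$-perfect and let $\{\CM_i\}_{i\in I}$ be a family in $\D_\qc(Y)$. I would show that the canonical morphism
\[
\alpha \colon \CE \otimes^{\LL}_X \LL f^*\!\bigl(\textstyle\prod_i \CM_i\bigr) \lto \prod_i\bigl(\CE \otimes^{\LL}_X \LL f^*\! \CM_i\bigr)
\]
is an isomorphism in $\D_\qc(X)$ by testing it against the perfect generators. For $\CL\in\perf(X)$, strong dualizability (\ref{perfdual}) lets me write $\Hom_X(\CL,\CE\otimes^{\LL}_X\LL f^*\CM) \cong \R\Gamma(X, \CL^\vee \otimes^{\LL}_X \CE \otimes^{\LL}_X \LL f^*\CM)$, and the projection formula together with $\R\Gamma(X,-)=\R\Gamma(Y,\R f_*(-))$ turns this into $\Hom_Y(\CO_Y, \R f_*(\CL^\vee\otimes^{\LL}_X \CE)\otimes^{\LL}_Y \CM)$. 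By Theorem~\ref{carrelperf}, (\ref{rpuno})$\imp$(\ref{rptres}), the complex $\R f_*(\CL^\vee\otimes^{\LL}_X \CE)$ is perfect, so tensoring with it commutes with products (Proposition~\ref{comprod}); the functor $\Hom_Y(\CO_Y,-)$ also commutes with products. Stringing these isomorphisms together gives $\Hom_X(\CL,\alpha)$ is an isomorphism, as required.

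For the converse, assume $\CE \otimes^{\LL}_X \LL f^*(-)$ commutes with products. By Theorem~\ref{carrelperf}, (\ref{rptres})$\imp$(\ref{rpuno}), it suffices to show $\R f_*(\CE \otimes^{\LL}_X \CL)$ is perfect for every $\CL\in\perf(X)$. By Proposition~\ref{comprod}, this reduces to showing that the functor $\R f_*(\CE\otimes^{\LL}_X\CL) \otimes^{\LL}_Y (-)$ commutes with products on $\D_\qc(Y)$. Given $\{\CM_i\}$, the projection formula rewrites $\R f_*(\CE\otimes^{\LL}_X \CL)\otimes^{\LL}_Y\prod_i\CM_i$ as $\R f_*\bigl(\CL\otimes^{\LL}_X(\CE\otimes^{\LL}_X \LL f^*\prod_i\CM_i)\bigr)$; then the hypothesis on $\CE$, the fact that $\CL\otimes^{\LL}_X(-)$ commutes with products (again Proposition~\ref{comprod}), and the fact that $\R f_*$, as right adjoint to $\LL f^*$, commutes with products, combine to yield $\prod_i\R f_*(\CE\otimes^{\LL}_X\CL)\otimes^{\LL}_Y\CM_i$.

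The main conceptual point is recognizing that Theorem~\ref{carrelperf} provides the right bridge between the absolute and relative settings; once this is in place, the rest is a routine combination of projection formula, the tensor-Hom adjunction via strong dualizability, and the $(\LL f^*,\R f_*)$-adjunction. The only subtlety to keep in mind is that products in $\D_\qc(X)$ need not coincide with products in $\D(X)$ (cf.~\ref{prodqc}), but this is harmless since all arguments take place inside the quasi-coherent derived categories and are tested against compact (equivalently, perfect) objects.
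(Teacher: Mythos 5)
Your proof is correct and follows essentially the same route as the paper: reduce via compact generation and the projection formula to the perfection of $\R f_*(\CL\otimes^{\LL}_X\CE)$ for perfect $\CL$, then apply Proposition~\ref{comprod} and Theorem~\ref{carrelperf}. The paper packages both implications into a single chain of ``if and only if'' reductions rather than arguing the two directions separately, but the ingredients and their roles are identical.
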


\begin{proof} Let $\{\CM_\lambda\}_{\lambda \in \Lambda}$ be a family of objects in $\D_\qc(X)$ and let 
\[ 
\CE \otimes^{\LL}_{X} \LL{}f^*(\prod_{\lambda\in\Lambda}\CM_\lambda) \overset\phi\to \prod_{\lambda\in\Lambda}  \CE \otimes^{\LL}_{X} \LL{}f^*(\CM_\lambda)
\] be the natural morphism. Since $\D_\qc(X)$ is generated by perfect complexes, $\phi$ is an isomorphism if and only if $\R{}f_*(\CL\otimes_X^{\LL} \phi)$ is an isomorphism for any $\CL \in \perf(X)$. By Proposition \ref{comprod}, the functor 
$\R{}f_*(\CL\otimes_X^{\LL}-) \colon \D_\qc(X) \to \D_\qc(Y)$ commutes with products. Therefore $\CE \otimes^{\LL}_{X} \LL{}f^*(-)$ commutes with products if and only if so does $\R{}f_*(\CL\otimes_X^{\LL}\CE \otimes^{\LL}_{X} \LL{}f^*(-))$ for any $\CL \in \perf(X)$. By the projection formula, this amounts to say that $\R{}f_*(\CL\otimes_X^{\LL}\CE)\otimes_Y^{\LL}- \colon \D_\qc(Y) \to \D_\qc(Y)$ commutes with products, \ie that $\R{}f_*(\CL\otimes_X^{\LL}\CE)$ is perfect, using Proposition \ref{comprod} again. We conclude by Theorem \ref{carrelperf}.
\end{proof}

\begin{rem}
 The previous Proposition is a relative version of Proposition \ref{comprod}. Notice that the products are computed within $\D_\qc(X)$ and $\D_\qc(Y)$, see \ref{prodqc}.
\end{rem}

\section{Semicontinuity and Grauert's theorem} \label{sec5}

In this section we show that semicontinuity and a certain form of base change of cohomology follow from Theorem \ref{absrelmor}(\ref{f-perf2}), provided we substitute the classical fiber of a complex with a derived version to be defined below.

\begin{cosa}\label{niderfib}
Let $f \colon X \to Y$ be a morphism of schemes. For any point $y \in Y$, let us denote $X_y := X \times_{Y} \spec(\kappa(y))$ and $l_y \colon X_y \to X$ the canonical morphism. 
Notice that $l_y$ is an affine morphism. 
For $\CE \in \D_\qc(X)$ denote 
\[
\CE(y) :=  \CE \otimes^{\LL}_X \LL{}f^*\CK(y)   . 
\]
Notice that $\CE(y)$ is supported on $X_y$ and $\h^q(\CE(y))$ is a quasi-coherent $l_{y *}\CO_{X_y}$-module.  There is a natural morphism (of comparison with the ususal derived fiber)
\begin{equation}\label{compfib1}
\CE(y) \lto l_{y *}\LL{}l_{y}^* \CE \cong \CE \otimes^{\LL}_X l_{y *}\CO_{X_y}.
\end{equation}
The complex $\CE(y)$ is the \emph{nice} derived fiber of $\CE$ over $y$. It has a better behavior with respect to cohomology than the classical derived fiber $\LL{}l_{y}^* \CE$.
 Notice that if $x \in f^{-1}(\{y\})$, then $\CE(y)_x = \CE_x \otimes^{\LL}_{\CO_{Y,y}} \kappa(y)$.
 
 By the projection formula $\R f_* \CE \otimes^{\LL}_Y \CK(y) \cong \R f_* (\CE  (y)) $ and then
\begin{equation}\label{cohfiber}
\LL i_y^* (\R f_* \CE) \cong \R{}\Gamma(X , \CE(y))
\end{equation}
 with $i_y \colon \spec(\kappa(y)) \to Y$ the canonical morphism. Hence $\h^q(X,\CE(y))$ are $\kappa(y)$-vector spaces.
 
 \begin{rem} The natural morphisms $\CE(y)\to l_{y *}l_y^{-1}\CE(y)\to \R l_{y *}l_y^{-1}\CE(y)$ are isomorphisms. This follows easily by factoring $l_y$ as the composition of the closed embedding $X_y\hookrightarrow X\times_Y\spec(\CO_{Y,y})$ and the flat morphism $X\times_Y\spec(\CO_{Y,y})\to  X$. Then
 \[  \R{}\Gamma(X , \CE(y)) \cong  \R{}\Gamma(X_y , \CE(y)\vert_{X_y}). \] This justifies the notation $H^q(X_y,\CE(y)):=H^q(X,\CE(y))$ that we shall use in the sequel.
 \end{rem}
\end{cosa}

\begin{ex}\label{ejemplo}
Let $Y$ be a regular scheme \emph{e.~g.}\/ a smooth algebraic scheme over a  field. Let $y_0 \in Y$ be a closed point. Let $f \colon X \to Y$ be the blow-up of $Y$ along $y_0$. It is customary to denote $E = X_{y_0}$ the \emph{exceptional fiber}. In this case the morphism $l_{y_0} \colon E \to X$ is just the canonical embedding. Let us consider $\CE = \CO_X(1)$. As a complex, $\CE$ is $f$-perfect, by Corollary \ref{properoverregular}. The usual fiber of $\CE$ at $y_0$ is $\CE_{y_0} : = l_{y_0}^* \CE= \LL l_{y_0}^* \CE =\CO_E (1)$ (notation as in \cite[III, \S 9]{HAG}). In contrast, the nice derived fiber is $\CE(y_0)= \CE\otimes_X^{\LL}\LL{}f^*\CK(y_0) $, whose $0^\mathrm{th}$ cohomology is $\CE_{y_0}$, but whose higher cohomologies take into account the lack of flatness of $f$ along $y$. For any $y\neq y_0$, one has $\CE(y)=\CE_y =\CK(y)$. An easy computation shows that the Euler characteristics are:
\[\aligned  
\chi(\CE_{y_0})& = \dim(Y), &\text{ while } \chi(\CE_{y})&=1 \text{ for any } y\neq y_0\\ 
\chi(\CE (y_0))& = 1, &\text{ and } \chi(\CE (y))&=1 \text{ for any } y\neq y_0.\endaligned\] 
which shows that $\chi(\CE_{y})$ is not constant along $Y$, but $\chi(\CE(y))$ is. Thus, $\CE$ does not satisfy the classical semicontinuity theorem; this is due to the fact that $\CE$ is not flat over $Y$.
\end{ex}

\begin{thm}\label{locconst}
 Let $f \colon X \to Y$ be a quasi-proper morphism of schemes. For $\CE \in \perf(f)$ the function
\begin{align*}
 \chi(f, \CE) \colon Y &\lto \ZZ\\
 y & \goesto \sum_{q \in \ZZ} (-1)^q \dim_{\kappa(y)} \h^q(X_y, \CE(y))
\end{align*}
is locally constant on $Y$.
\end{thm}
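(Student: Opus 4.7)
The plan is to reduce the statement to the well-known fact that the Euler characteristic of the pointwise derived pullback of a perfect complex is locally constant. First, combining the isomorphism \eqref{cohfiber} with the remark following it (which identifies $\R\Gamma(X,\CE(y))$ with $\R\Gamma(X_y,\CE(y))$), we rewrite
\[
\chi(f,\CE)(y) \;=\; \sum_{q\in\ZZ} (-1)^q \dim_{\kappa(y)} \h^q\!\bigl(\LL i_y^* \R{}f_*\CE\bigr).
\]
Since $f$ is quasi-proper and $\CE \in \perf(f)$, Theorem~\ref{absrelmor}(\ref{f-perf2}) gives that $\R{}f_*\CE$ is a perfect complex on $Y$. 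Therefore it suffices to establish the following auxiliary fact: for any perfect complex $\CF$ on a scheme $Y$, the function $y \mapsto \sum_q (-1)^q \dim_{\kappa(y)} \h^q(\LL i_y^*\CF)$ is locally constant on $Y$.

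To prove this auxiliary claim, we work locally. Fix $y_0 \in Y$ and choose an affine open neighborhood $U$ of $y_0$ on which $\CF|_U$ is quasi-isomorphic to a strictly perfect complex $\CL^\bullet$; shrinking $U$ further if necessary, we may assume each $\CL^q$ is free of constant finite rank $r_q$ and that only finitely many $r_q$ are nonzero. Then for every $y \in U$, $\LL i_y^*\CF$ is represented by $\CL^\bullet \otimes_{\CO_Y} \kappa(y)$, a bounded complex of finite dimensional $\kappa(y)$-vector spaces whose term in degree $q$ has dimension $r_q$. The Euler characteristic of such a complex coincides with the alternating sum of the dimensions of its terms, so
\[
\sum_q (-1)^q \dim_{\kappa(y)} \h^q(\LL i_y^*\CF) \;=\; \sum_q (-1)^q r_q
\]
for every $y \in U$, which is visibly independent of $y$.

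The main content of the theorem is carried entirely by Theorem~\ref{absrelmor}(\ref{f-perf2}); once the perfection of $\R{}f_*\CE$ is in hand, the remainder is a standard local computation with bounded complexes of free modules. In particular, no further obstacle arises: there is no need to appeal to Noetherian approximation, semicontinuity, or flatness of $f$, which is the precise payoff of having the relative perfection hypothesis on $\CE$ rather than merely on the morphism $f$.
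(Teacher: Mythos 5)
Your proof is correct and follows essentially the same path as the paper: apply Theorem~\ref{absrelmor}(\ref{f-perf2}) to conclude that $\R{}f_*\CE$ is perfect, use the identification~\eqref{cohfiber} to transfer the computation to $\LL i_y^*\R{}f_*\CE$, then pass to an affine neighborhood where $\R{}f_*\CE$ is represented by a bounded complex of finite free modules and observe that the Euler characteristic equals the alternating sum of ranks, hence is constant there. The paper carries out exactly this local computation.
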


\begin{proof} By Theorem \ref{absrelmor},   $\R{}f_*\CE$ is a perfect complex; so $\R{}\Gamma(X_y , \CE(y))$   is a bounded complex and $H^q(X_y,\CE(y))$ are finite dimensional (as $\kappa(y)$-vector spaces) in view of \eqref{cohfiber}.  Thus the alternate sum in the definition of $\chi(f, \CE) $ makes sense. Let $y \in Y$ and let $U = \spec(A) \subset Y$ be an affine neighborhood of $y$ such that $\R{}f_*\CE|_U \cong \widetilde{P^\bullet}$ where $P^\bullet$ is a bounded complex of finite free $A$-modules. It is enough to check that the function $\chi(f, \CE)|_U$ is constant. Now,
\begin{align*}
 \chi(f, \CE)(y) &= \sum_{q \in \ZZ} (-1)^q \dim_{\kappa(y)} \h^q(P^\bullet \otimes_A \kappa(y)) \tag{by \eqref{cohfiber}}\\
 & = \sum_{q \in \ZZ} (-1)^q \dim_{\kappa(y)} (P^q \otimes_A \kappa(y))\\
 & = \sum_{q \in \ZZ} (-1)^q \rank(P^q)
\end{align*}
and this last sum does not depend on $y$.
\end{proof}

\begin{cosa}
On a scheme $X$, we say that a function $\chi \colon  X \to \ZZ$ is upper semicontinuous if all the sets  $U^{\chi}_n := \{x \in X \,/ \, \chi(x) < n\}$ are open, equivalently if the sets $Z^{\chi}_n := \{x \in X \,/ \, \chi(x) \geq n\}$ are closed. A function $\chi$ is lower semicontinuous if $-\chi$ is upper semicontinuous.
\end{cosa}

\begin{thm}\label{semicont}
 (Semicontinuity)  Let $f \colon X \to Y$ be a quasi-proper morphism of schemes. For $\CE \in \perf(f)$ the function
\begin{align*}
 h^p(f, \CE) \colon Y & \lto \ZZ\\
 y                    & \goesto \dim_{\kappa(y)} \h^p(X_y, \CE(y))
\end{align*}
is upper semicontinuous on $Y$ for all $p \in \ZZ$.
\end{thm}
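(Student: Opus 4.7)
The plan is to reduce to an affine base where $\R{}f_*\CE$ is quasi-isomorphic to a bounded complex of finite free modules and then use the classical rank-semicontinuity argument of matrices.

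First, since $f$ is quasi-proper and $\CE$ is $f$-perfect, Theorem \ref{absrelmor}(\ref{f-perf2}) gives that $\R{}f_*\CE$ is perfect on $Y$. Fix $y_0 \in Y$ and choose an affine open neighborhood $U = \spec(A) \subset Y$ of $y_0$ such that $(\R{}f_*\CE)|_U \cong \widetilde{P^\bullet}$ for a bounded complex $P^\bullet = (\cdots \to P^{p-1} \xrightarrow{d^{p-1}} P^p \xrightarrow{d^p} P^{p+1} \to \cdots)$ of finitely generated free $A$-modules. By the base change identification \eqref{cohfiber} we have, for every $y \in U$,
\[
\h^p(X_y, \CE(y)) \cong \h^p(P^\bullet \otimes_A \kappa(y)).
\]
It therefore suffices to show that $y \mapsto \dim_{\kappa(y)}\h^p(P^\bullet \otimes_A \kappa(y))$ is upper semicontinuous on $U$.

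For each differential $d^p$ represented by a matrix $M^p$ with entries in $A$, set $r_p(y) := \rank_{\kappa(y)}(M^p \otimes \kappa(y))$. Standard linear algebra over the field $\kappa(y)$ gives
\[
\dim_{\kappa(y)}\h^p(P^\bullet \otimes_A \kappa(y)) = \rank(P^p) - r_p(y) - r_{p-1}(y).
\]
Thus it remains to check that each function $r_p \colon U \to \ZZ$ is \emph{lower} semicontinuous, since then the right-hand side is an integer constant minus a lower semicontinuous function, hence upper semicontinuous.

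Lower semicontinuity of $r_p$ is elementary: the condition $r_p(y) \geq k$ is equivalent to the existence of some $k \times k$ minor of $M^p$ whose image in $\kappa(y)$ is nonzero, which is an open condition on $y \in \spec(A)$ (it is the complement of the closed subset defined by the ideal generated by all $k \times k$ minors). Therefore $\{y \in U \mid r_p(y) \geq k\}$ is open for every $k$, i.e.\ $r_p$ is lower semicontinuous. Combining this for $r_p$ and $r_{p-1}$ yields the upper semicontinuity of $h^p(f,\CE)$ on $U$, and since $U$ was an arbitrary affine neighborhood this gives the result globally on $Y$. The only nontrivial input is the perfectness of $\R{}f_*\CE$ furnished by Theorem \ref{absrelmor}; once this is available, the argument is essentially the classical one, now valid without any Noetherian or flatness hypothesis.
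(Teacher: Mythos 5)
Your proof is correct and follows essentially the same route as the paper's: reduce to the perfectness of $\R{}f_*\CE$ via Theorem~\ref{absrelmor}, pass to an affine neighborhood where it is a bounded complex of finite free modules, use \eqref{cohfiber}, express the fiberwise cohomology dimension as a constant minus two rank functions, and conclude by lower semicontinuity of rank. The only cosmetic difference is that you spell out the minors argument for lower semicontinuity of the rank functions where the paper cites a textbook reference.
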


\begin{proof}
 As before, $\R{}f_*\CE$ is perfect complex. For $y \in Y$ there is an  affine neighborhood $U = \spec(A) \subset Y$ such that $\R{}f_*\CE|_U \cong \widetilde{P^\bullet}$ where $P^\bullet$ is a bounded complex of finite free $A$-modules. Denote by $d$ the differential of the complex $P^\bullet$. There is a chain of equalities
\begin{align*}
 h^p(f, \CE)(y) & = \dim_{\kappa(y)} \h^p(X_y, \CE(y)) \\
     &= \dim_{\kappa(y)} \h^p(P^\bullet \otimes \kappa(y)) \tag{by \eqref{cohfiber}}\\
     &= \dim_{\kappa(y)} \ker(d^p \otimes \id) - \dim_{\kappa(y)} \Img(d^{p-1} \otimes \id)\\
     &= \dim_{\kappa(y)} (P^p \otimes \kappa(y)) - \dim_{\kappa(y)} \Img(d^{p} \otimes \id) - \dim_{\kappa(y)} \Img(d^{p-1} \otimes \id)
\end{align*}
that expresses $ h^p(f, \CE)$ as a  constant function subtracted by functions of the kind
\[
y \goesto \dim_{\kappa(y)} \Img(d^{p} \otimes \id).
\]
It is enough to check that these functions are lower semicontinuous. This follows because they are functions that assign the rank of linear maps between free modules and they  are lower semicontinuous (see, for instance, \cite[Chapter IV, \S 2, Corollary 2.6]{KCA}).
\end{proof}

The following lemma is well-known. We include a proof for the convenience of the reader.

\begin{lem}\label{projcar}
Let $Y$ be a  reduced scheme and let $\CF \in \SA_\qc(Y)$ be of locally finite presentation. Denote, as before, $\CF(y) := \CF_y \otimes_{\CO_{Y,y }} \kappa(y)
$. The function
\[
y \goesto \dim_{\kappa(y)} \CF(y) 
\] 
is locally constant on $Y$ if, and only if, $\CF$ is locally free.
\end{lem}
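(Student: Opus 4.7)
The \emph{if} direction is immediate: if $\CF$ is locally free, it is (at each point) isomorphic to $\CO_Y^{\,n}$ on a neighborhood, and then $\dim_{\kappa(z)} \CF(z) = n$ is locally constant. The substance is in the converse, and for this I would argue locally. Fix $y \in Y$ and set $n := \dim_{\kappa(y)}\CF(y)$; by local constancy, after shrinking $Y$ around $y$ I may assume $\dim_{\kappa(z)}\CF(z) = n$ for every $z$, and I may further assume $Y = \spec(A)$ is affine so that $\CF = \widetilde{M}$ for a finitely presented $A$-module $M$.

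Next I would produce a free presentation near $y$. Choose elements $m_1, \dots, m_n \in M$ whose classes form a $\kappa(y)$-basis of $\CF(y)$. By Nakayama's lemma they generate $M_y$, and because $M$ is finitely generated, after shrinking $Y$ they generate $M$, giving a surjection $u \colon \CO_Y^{\,n} \to \CF$. Let $\CK := \ker(u)$; since $\CF$ and $\CO_Y^{\,n}$ are of finite presentation, $\CK$ is of finite type (this is the standard fact that the kernel of a surjection from a finitely generated module onto a finitely presented one is finitely generated). Now for every $z \in Y$, right-exactness of tensoring gives an exact sequence
\[
\CK(z) \lto \kappa(z)^n \xto{\;u(z)\;} \CF(z) \lto 0,
\]
and $u(z)$ is a surjection between $\kappa(z)$-vector spaces of the same dimension $n$, hence an isomorphism. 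Consequently the map $\CK(z) \to \kappa(z)^n$ is zero.

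The final step is where reducedness enters. Pick $y$ and, after shrinking, choose finitely many sections $k_1, \dots, k_r \in \Gamma(Y,\CK)$ generating $\CK$. Viewing each $k_i$ through the inclusion $\CK \hookrightarrow \CO_Y^{\,n}$ as a tuple $(k_i^1,\dots,k_i^n)$ with $k_i^j \in \Gamma(Y, \CO_Y)$, the vanishing of $\CK(z) \to \kappa(z)^n$ for all $z$ means $k_i^j(z) = 0$ for every $z \in Y$ and all $i,j$. Since $Y$ is reduced, a global section of $\CO_Y$ vanishing at every point is zero, so $k_i^j = 0$ for all $i,j$. Therefore $\CK = 0$ on our neighborhood and $u$ is an isomorphism $\CO_Y^{\,n} \iso \CF$, as required.

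The main obstacle is the passage from pointwise vanishing to global vanishing of the kernel: the fiber calculation only gives $\CK \otimes \kappa(z) \to \kappa(z)^n$ is the zero map (which, absent further hypotheses, does not force $\CK = 0$, as shown by nilpotent examples). The reducedness hypothesis is exactly what converts pointwise vanishing of sections of $\CO_Y$ into genuine vanishing, and it is the essential ingredient that distinguishes this criterion from the more delicate flatness-based criterion needed in the non-reduced setting.
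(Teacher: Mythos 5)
Your proof is correct and takes essentially the same route as the paper: reduce to a reduced affine scheme, use Nakayama plus local constancy of the fiber dimension to build a surjection $\CO_Y^{\,n}\to\CF$, and then invoke reducedness to kill the kernel. The only cosmetic difference is that the paper argues by contradiction (a nonzero relation $(a_1,\dots,a_n)\in\ker\varphi$ would have some component $a_i$ a unit at a prime $\iq$, forcing $\mu_\iq(M)<n$), while you argue directly that the generators of $\ker u$ have all their $\CO_Y$-components vanishing at every point and hence are zero — the same use of reducedness, stated in contrapositive form.
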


\begin{proof} The if part is immediate. For the converse, the question is local, so we reduce at once to the affine case $Y = \spec(A)$, where $A$ is a  reduced ring and $\CF = \widetilde{M}$ with $M$ a finitely presented module. Let $\ip \in \spec(A)$, let $\mu_\ip(M)$ denote the minimal number of generators of the $A_\ip$-module $M_\ip$. By Nakayama's Lemma
 \[
\mu_\ip(M) = \dim_{\kappa(\ip)} M \otimes_{A } \kappa(\ip)
 \]
Let $\{m_1, \ldots, m_n\} \subset M$ such that $\{m_1 \otimes 1, \ldots, m_n \otimes 1\}$ is a basis of $M \otimes_{A } \kappa(\ip)$, 
therefore $n=\mu_\ip(M)$. Let $\varphi \colon A^{n} \to M$ be the homomorphism given by $\varphi(e_i) := m_i$ with $\{e_1, \ldots, e_n\}$ denoting the canonical base of $A^n$. As the set $\{m_1, \ldots, m_n\}$ generate $M$ at a neighborhood of $\ip$, after localizing $A$ at an $f \notin \ip$ we can assume that $\varphi$ is surjective. To avoid clutter, we substitute $A$ and $M$ by its localized counterparts. If $\ker(\varphi) = 0$, we are done, as $M$ is free in a neighborhood of $\ip$ being isomorphic to $A^n$. Suppose not, then there is an element $(a_1, \ldots, a_n) \in \ker(\varphi)$ with some $a_i \neq 0$. But, as $A$ is reduced, this means that there is a prime ideal $\iq$ such that $a_i \notin \iq$, but this implies that $\frac{a_i}{1}$ is invertible in $A_{\iq}$ so $\mu_\iq(M) < n=\mu_\ip(M)$, a contradiction.
\end{proof}

\begin{cor}\label{grauert}
 (Grauert's theorem) In the setting of the previous Theorem, let us assume further that $Y$ is reduced. The function $h^p(f, \CE)$ is locally constant for some $p$ if, and only if, $\SR^p\!f_*\CE = \h^p(\R{}f_*\CE)$ is locally free and the canonical map
\begin{equation}\label{grauertiso}
(\SR^p\!f_*\CE)_y \otimes_{\CO_{Y,y}} \kappa(y) \lto \h^p(X_y, \CE(y))
\end{equation}
is an isomorphism for every $y \in Y$.
\end{cor}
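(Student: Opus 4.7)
The plan is to reduce to an affine local setting using Theorem~\ref{absrelmor}(\ref{f-perf2}), which gives that $\R f_*\CE$ is perfect; then on a small enough affine open $U = \spec(A) \subset Y$ (with $A$ reduced) it will be represented by a bounded complex $P^\bullet$ of finite free $A$-modules with differentials $d^\bullet$. The \emph{if} direction is then immediate from Lemma~\ref{projcar}: local freeness of $\SR^p f_*\CE$ makes $y \mapsto \dim_{\kappa(y)} (\SR^p f_*\CE)_y \otimes \kappa(y)$ locally constant, and the assumed isomorphism~\eqref{grauertiso} transfers this to $h^p(f,\CE)$.

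For the \emph{only if} direction I would reuse the formula
\[
h^p(f,\CE)(y) \,=\, \rank P^p \,-\, r_p(y) \,-\, r_{p-1}(y), \qquad r_k(y) := \dim_{\kappa(y)}\Img(d^k \otimes \id_{\kappa(y)}),
\]
from the proof of Theorem~\ref{semicont}, in which both $r_p$ and $r_{p-1}$ are lower semicontinuous. The main step --- and the only delicate one --- is to decouple these two summands: since their sum $\rank P^p - h^p(f,\CE)$ is locally constant, each $r_k$ is the difference of a locally constant function and a lower semicontinuous function, hence simultaneously lower and upper semicontinuous, hence locally constant. Everything past this point is essentially bookkeeping.

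Indeed, Lemma~\ref{projcar} applied to the finitely presented cokernels $\cok(d^k)$, whose fiber dimensions $\rank P^{k+1} - r_k(y)$ are now locally constant, shows they are locally free for $k \in \{p-1,p\}$. Hence each sequence $0 \to \Img(d^k) \to P^{k+1} \to \cok(d^k) \to 0$ splits locally, so $\Img(d^k)$, and via $0 \to \ker(d^p) \to P^p \to \Img(d^p) \to 0$ also $\ker(d^p)$, is locally free of locally constant rank. The two Tor-vanishings $\tor_1^A(\Img(d^p),\kappa(y)) = 0$ and $\tor_1^A(\cok(d^{p-1}),\kappa(y)) = 0$ then identify $\ker(d^p) \otimes \kappa(y)$ with $\ker(d^p \otimes \id)$ and $\Img(d^{p-1}) \otimes \kappa(y)$ with $\Img(d^{p-1} \otimes \id)$ respectively. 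Passing to the quotient furnishes the canonical isomorphism~\eqref{grauertiso}, so $\dim_{\kappa(y)}\SR^p f_*\CE \otimes \kappa(y) = h^p(f,\CE)(y)$ is locally constant, and one last appeal to Lemma~\ref{projcar} gives the local freeness of $\SR^p f_*\CE$.
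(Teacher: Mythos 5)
Your proof is correct and follows essentially the same route as the paper's: both reduce to a bounded complex $P^\bullet$ of finite free modules over a reduced affine base, deduce from local constancy of $h^p(f,\CE)$ that each rank function $r_k(y)=\dim_{\kappa(y)}\Img(d^k\otimes\id)$ for $k\in\{p-1,p\}$ is locally constant, apply Lemma~\ref{projcar} to the cokernels, and then split off direct summands and pass to fibers. If anything, you are slightly more explicit than the paper in treating $\cok(d^{p-1})$; the only small bookkeeping slip is that identifying $\ker(d^p)\otimes\kappa(y)$ with $\ker(d^p\otimes\id)$ also uses $\tor_1^A(\cok(d^p),\kappa(y))=0$, not merely $\tor_1^A(\Img(d^p),\kappa(y))=0$, but you have already established $\cok(d^p)$ locally free so this is at hand.
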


\begin{proof} If $\SR^p\!f_*\CE$ is locally free and \eqref{grauertiso} is an isomorphism, then   $h^p(f, \CE)$ is locally constant by   Lemma \ref{projcar}. For the converse, we may assume that $Y$ is affine, the spectrum of a reduced ring $A$. We keep the notations of Theorem \ref{semicont}. Looking at the definition of $h^p(f, \CE)(y)$ in the displayed equalities of the Theorem, we conclude that the functions on $y$
 \[
 \dim_{\kappa(y)} \Img(d^{p} \otimes \id) \quad\text{ and }\quad
  \dim_{\kappa(y)} \Img(d^{p-1} \otimes \id)
 \] have to be locally constant. It follows that the function on $y$
 \[
 \dim_{\kappa(y)} \cok(d^{p} \otimes \id) =
 \dim_{\kappa(y)} (P^p \otimes \kappa(y)) - \dim_{\kappa(y)} \Img(d^{p} \otimes \id)
 \]
 is also locally constant. By right exactness of the functor $- \otimes_{A} \kappa(y) $, we have
 \[
 \cok(d^{p} \otimes \id) = \cok(d^{p}) \otimes \kappa(y)
 \]
 By Lemma \ref{projcar}, in view of \cite[Chapter IV, \S 2, Corollary 3.6]{KCA}, we conclude that $\cok(d^{p})$ is a projective module, and, as a consequence 
 \[P^{p+1} \cong \Img(d^{p}) \oplus \cok(d^{p}).\]
 As $P^{p+1}$ is projective by assumption, so is $\Img(d^{p})$ and it follows that 
 \[P^{p} \cong \ker(d^{p}) \oplus \Img(d^{p}).\]
Thus, $\ker(d^{p})$ is projective too. Notice that as, $\h^p(P) = \ker(d^{p})/\Img(d^{p-1})$ we have the following equality
\begin{align*}
 \dim_{\kappa(y)} &(\h^p(P^\bullet) \otimes_A \kappa(y)) =\\
          & = \dim_{\kappa(y)} (\ker(d^{p}) \otimes_A \kappa(y)) 
            - \dim_{\kappa(y)} (\Img(d^{p-1}) \otimes_A \kappa(y))
\end{align*}
Therefore, as $\SR^p\!f_*\CE = \widetilde{\h^p(P^\bullet)}$ applying the same result once more we conclude that $\SR^p\!f_*\CE$ is a locally free sheaf.

For the base change map, notice that the sequence
\[
P^{p-1} \xto{\,d^{p-1}} P^{p} \xto{\,d^{p}} P^{p+1} 
\]
splits as
\begin{small}
 \[
\ker(d^{p-1}) \oplus \Img(d^{p-1}) \lto
\Img(d^{p-1}) \oplus \h^p(P^\bullet) \oplus \Img(d^{p}) \lto
\Img(d^{p}) \oplus \cok(d^{p})
\]
\end{small}
from which it follows that
\[
\h^p(P^\bullet \otimes_A \kappa(y)) \cong \h^p(P^\bullet) \otimes_A \kappa(y)
\]
This isomorphism immediately yields
\[
\h^p(X_y, \CE(y)) \cong
(\SR^p\!f_*\CE)_y \otimes_{\CO_{Y,y}} \kappa(y)
\]
as wanted.
\end{proof}

\begin{rem}
\begin{enumerate}[(a)]
 \item Most of the references for the semicontinuity theorem rely on the observation that there is a perfect complex that computes cohomology, see, for instance \cite[Chapter III, Theorem 12.8]{HAG} or \cite[Chapter II, \S 5, Corollary, p.50]{MAV}. See  also \cite[\S 2.6]{leifu} whose strategy we followed for semicontinuity and Grauert's theorem. Our argument uses instead Theorem \ref{absrelmor}. With it, we see that the theorem is true without Noetherian hypothesis by using a suitable notion of derived fiber. We do not even make recourse to Noetherian approximation, apart from its implicit use via Kiehl's finiteness theorem.
 \item If $\CE$ is a pseudo-coherent $\CO_X$-Module flat over $Y$, Theorem \ref{semicont} yields the classical semicontinuity theorem because, in this case, for any $y \in Y$,
 \[
 \CE_y  := l_{y}^* \CE= \LL{}l_{y}^* \CE \quad\text{ and }\quad l_{y_*} \CE_y \cong \CE(y) \qquad \text{via \eqref{compfib1}.}
 \]
 \item If $\CE$ is a pseudo-coherent $\CO_X$-Module not necessarily flat over $Y$, then Theorem \ref{semicont} yields a semicontinuity theorem in terms of the nice derived fiber $\CE(y)$, for each $y \in Y$, as long as $\CE$ is $f$-perfect. This happens, for instance, when $Y$ is regular and $\CE$ is any pseudo-coherent sheaf on $X$ by Corollary \ref{properoverregular}. Notice also that
\[
 l_{y}^* \CE  = \h^0(\LL{}l_{y}^* \CE) \cong \h^0(\CE(y));
\]
 therefore, when $\CE$ is not flat over $Y$, the semicontinuity theorem still holds after replacing the ordinary fiber $l_{y}^* \CE$ by the nice derived fiber $\CE(y)$.
 \item If $f$ is flat, then $  l_{y_*}  \LL{}l_{y}^* \CE \cong \CE(y)$ and Theorem \ref{semicont} yields a semicontinuity theorem for the classic derived fibers $ \LL{}l_{y}^* \CE$, whenever $\CE$ is is $f$-perfect (again, if $Y$ is regular, it holds for any $\CE \in \Dbc{X}$).
 \item The same remarks apply to Theorems \ref{locconst} and \ref{grauert}.
\end{enumerate}
\end{rem}

\section{A bivariant theory}\label{bibthy}

We remind the reader our convention that all schemes are concentrated. Fix a base scheme $S$. The category of schemes over $S$ will be denoted by $\sch/S$. We define, for $f \colon X \to Y$ in $\sch/S$ the relative $K_0$-group as:
\[
\biE{f}{X}{Y} := K_0(\perf(f)).
\]
It will be convenient sometimes to abbreviate $\biE{f}{X}{Y} $ by $\BIK(f)$. Notice that, though a priori an element of $\BIK(f)$ is a (finite) formal sum of isomorphism classes of objects in $\perf(f)$, every element may be represented by a class of a single object.

The \emph{confined maps} of the theory are the quasi-proper morphisms. The \emph{independent squares} will be the   transverse   squares (\ref{transq}). With these choices, let us define the operations of the theory. 

\begin{enumerate}
\item[\textbf{Product}.] For any maps $f \colon X \to Y$, $g \colon Y \to Z$ in $\sch/S$, let us define the internal product 
\[
\BIK(f) \times \BIK(g) \overset{\cdot}{\lto} \BIK(g \circ f).
\]
Given $[\CE] \in \BIK(f)$ and $[\CF] \in \BIK(g)$ the product is defined by
\[
[\CE] \cdot [\CF] : = [\CE \otimes^{\LL}_{X} \LL{}f^*\CF].
\]
The complex $\CE \otimes^{\LL}_{X}\LL{}f^*\CF$ is $g \circ f$-perfect: if $\CM\in \Dbq{Z}$, then
\[ \CE \otimes^{\LL}_{X}\LL{}f^*\CF \otimes^{\LL}_{X}\LL{}(g\circ f)^*\CM \cong \CE \otimes^{\LL}_{X}\LL{}f^*(\CF \otimes^{\LL}_{Y}\LL{}g^*\CM)\] which is bounded because $\CF$ is $g$-perfect and $\CE$ is $f$-perfect.

For the case when $X=Y$ and $f = \id_X$,  $\CO_X \in \perf(X)$ is a left unit for the product we have just defined. Similarly when  $Y=Z$ and $g = \id_Y$, $\CO_Y \in \perf(Y)$ is a right unit.  

\item[\textbf{Pushforward}.]

Let $f \colon X \to Y$ and $g \colon Y \to Z$ be maps in $\sch/S$, with $f$~confined. 
The \emph{pushforward  by} {$f$}
\[
\pf{f} \colon \biE{g \circ f}{X}{Z} \lto \biE{g}{Y}{Z} 
\]
is defined for $[\CE] \in \BIK(g \circ f)$ by 
$\pf{f}[\CE] := [\R{}f_*\CE]$.
The definition is correct by virtue of Proposition \ref{pfper}.

\item[\textbf{Pullback}.]
Let $\dd$ be an independent square in $\sch/S$
  \[
   \begin{tikzpicture}[yscale=.95]
      \draw[white] (0cm,0.5cm) -- +(0: \linewidth)
      node (21) [black, pos = 0.41] {$Y'$}
      node (22) [black, pos = 0.59] {$Y$};
      \draw[white] (0cm,2.65cm) -- +(0: \linewidth)
      node (11) [black, pos = 0.41] {$X'$}
      node (12) [black, pos = 0.59] {$X$};
      \node (C) at (intersection of 11--22 and 12--21) [scale=0.9] {$\dd$};
      \draw [->] (11) -- (12) node[above, midway, sloped, scale=0.75]{$g'$};
      \draw [->] (21) -- (22) node[below, midway, sloped, scale=0.75]{$g$};
      \draw [->] (11) -- (21) node[left, midway, scale=0.75]{$f'$};
      \draw [->] (12) -- (22) node[right, midway, scale=0.75]{$f$};
   \end{tikzpicture}
  \]
The \emph{ pullback by $g,$ through $\dd,$}
\[
\pb{g}\colon \biE{f}{X}{Y} \lto \biE{f'}{X'}{Y'} 
\]
is given for $[\CE] \in \BIK(f)$ by 
$\pb{f}[\CE] := [\LL{}g'^*\CE]$.
The assignment is well-defined by virtue of Proposition \ref{pbpar}.
\end{enumerate}

\begin{thm}\label{biperfrel}
The previous data constitute a bivariant theory, with units, on $\sch/S$ taking values in abelian groups.
\end{thm}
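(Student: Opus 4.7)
The plan is to verify the seven Fulton--MacPherson axioms (A1--A3, A12, A13, A23, A123) plus the existence of units, after first checking that each of the three operations is well-defined at the level of $K_0$. Well-definedness follows from the fact that $\otimes^{\LL}$, $\LL f^*$, and $\R f_*$ all take distinguished triangles to distinguished triangles in the respective derived categories; combined with the class-level checks already made in the body of the paper (Propositions \ref{pfper} and \ref{pbpar}, and the remark preceding the pushforward definition), this shows the three operations descend to well-defined homomorphisms of abelian groups.

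For the axioms, all of which amount to identities between classes in $K_0$ of complexes, it suffices to exhibit natural quasi-isomorphisms between representatives. Associativity of the product (A1) follows from associativity of $\otimes^{\LL}$ together with the monoidal compatibility $\LL f^*(\CF\otimes^{\LL}_Y \CG)\cong \LL f^*\CF\otimes^{\LL}_X \LL f^*\CG$. Functoriality of pushforward (A2) reduces to $\R(g\circ f)_*\cong \R g_*\circ \R f_*$, while functoriality of pullback (A3) reduces to $\LL(g\circ g')^*\cong \LL{g'}^*\circ\LL g^*$ combined with the standard fact that the composite of two transverse squares sharing an edge is transverse. Compatibility of product and pushforward (A12) is the projection formula $\R f_*(\CE\otimes^{\LL}_X\LL f^*\CF)\cong \R f_*\CE\otimes^{\LL}_Y\CF$. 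Compatibility of product and pullback (A13) comes again from $\LL g^*$ being symmetric monoidal together with the equality $\LL f'^*\LL g^*=\LL g'^*\LL f^*$ for a Cartesian square, which holds without transversality. Compatibility of pushforward and pullback (A23) is precisely the base change isomorphism $\LL g^*\R f_*\cong \R f'_*\LL{g'}^*$ for transverse squares recalled in \ref{transq}. The combined projection-base-change axiom (A123) is a formal consequence of A12 and A23 applied in the standard diagram (see \cite[\S 2]{fmc}).

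Units are easy: for $X\in\sch/S$ the class $1_X:=[\CO_X]\in \BIK(\id_X)$ is a two-sided unit, since $\CO_X\otimes^{\LL}_X\LL(\id_X)^*\CE\cong \CE$ and $\CE\otimes^{\LL}_X\LL f^*\CO_Y\cong \CE$ for any $f\colon X\to Y$, so $1_X\cdot[\CE]=[\CE]=[\CE]\cdot 1_Y$ whenever the products are defined. Additionally, one must check that $[\CO_X]$ really lives in $\BIK(\id_X)=K_0(\perf(X))$, which is just the absolute case of our definition.

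The only step requiring care is A23, where one must ensure that the appearances of $\LL{g'}^*$ and $\R f'_*$ preserve the relevant subcategories of relatively perfect complexes so that the base change isomorphism lands in $\perf(f')$ on both sides; this is exactly what Propositions \ref{pbpar} and \ref{pfper} are designed to provide. No new conceptual work is required: once well-definedness is granted, the axioms are formal consequences of standard derived-category identities already compiled in Lipman's book \cite{yellow}, so the proof reduces to assembling references.
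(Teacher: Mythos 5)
Your proposal is correct and follows essentially the same route as the paper: the paper's proof is precisely the string of Propositions \ref{A1}--\ref{A123}, each verifying one Fulton--MacPherson axiom by the same standard derived-category identities you invoke (pseudo-functoriality of $\LL(-)^*$ and $\R(-)_*$, monoidality of $\LL f^*$, the projection formula, and tor-independent base change), together with Propositions \ref{pfper} and \ref{pbpar} to guarantee that pushforward and pullback land in the correct categories of relatively perfect complexes. The one slight imprecision is your remark that $A_{123}$ is ``a formal consequence of $A_{12}$ and $A_{23}$'': in the Fulton--MacPherson framework the seven axioms are logically independent, and $A_{123}$ does not follow from the others as an abstract implication. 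What is true, and what the paper does in Proposition \ref{A123}, is that the \emph{same two underlying isomorphisms} reused in sequence --- first the projection formula for $g'$, then base change across the transverse square $(f,f',g,g')$ --- yield a direct two-line proof; so the spirit of your remark is right even if the phrasing overstates it.
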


We prove Theorem~\ref{biperfrel} by verifying that the axioms for a bivariant theory, as described in \cite[\S 2.2]{fmc}, do hold for the data referred to in that theorem. We will decompose the proof as a string of propositions \ref{A1}--\ref{A123}, each of them dealing with a corresponding axiom for the bivariant theory.

Following \cite{fmc}, we indicate an $\alpha\in\biE{f}{X}{Y}$ by the notation 
\vspace{-4pt}
\[
 \begin{tikzpicture}
      \draw[white] (0cm,1cm) -- +(0: \linewidth)
      node (22) [black, pos = 0.43] {$X$}
      node (23) [black, pos = 0.57] {$Y\>.$};
      \draw [->] (22) -- (23) node[auto, midway, scale=0.75]{$f$}
             node[below=1.5mm, midway, shape=circle, draw, scale=0.68]{$\!\alpha\!$};
 \end{tikzpicture}
\]

\begin{prop}\label{A1}
\emph{$(A_1)$ Associativity of  product:}\va3

For any sequence of maps in $\sch/S$ with corresponding elements in bivariant $K$-groups
  \[
   \begin{tikzpicture}
      \draw[white] (0cm,1cm) -- +(0: \linewidth)
      node (22) [black, pos = 0.29] {$X$}
      node (23) [black, pos = 0.43] {$Y$}
      node (24) [black, pos = 0.57] {$Z$}
      node (25) [black, pos = 0.71] {$W$};
      \draw [->] (22) -- (23) node[above=0.5mm, midway, scale=0.75]{$f$}
                              node[below=2mm, midway, shape=circle, draw,
                                   scale=0.68]{$\!\alpha\!$};
      \draw [->] (23) -- (24) node[above=0.5mm, midway, scale=0.75]{$g$}
                              node[below=2mm, midway, shape=circle, draw,
                                   scale=0.65]{$\!\!\!\!\beta\!\!\!\!$};
      \draw [->] (24) -- (25) node[above=0.5mm, midway, scale=0.75]{$h$}
                              node[below=2mm, midway, shape=circle, draw,
                                   scale=0.7]{$\!\!\gamma\!\!$};
   \end{tikzpicture}
  \]
we have, in\/  $\biE{h  g  f}{X}{W},$
\[
   (\alpha\< \cdot\<\< \beta)\<\< \cdot\<\gamma
   =  \alpha\< \cdot\<\< (\beta\< \cdot\< \gamma).
\]
\end{prop}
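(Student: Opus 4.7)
The plan is to represent the classes concretely as $\alpha=[\CE]$, $\beta=[\CF]$, $\gamma=[\CG]$ with $\CE$ an $f$-perfect complex on $X$, $\CF$ a $g$-perfect complex on $Y$, and $\CG$ an $h$-perfect complex on $Z$, and then reduce the equality of $K_0$-classes to a canonical isomorphism of representing objects in $\perf(hgf)$. Unwinding the definition of the product,
\[
(\alpha\cdot\beta)\cdot\gamma = \bigl[(\CE\otimes^{\LL}_X\LL f^*\CF)\otimes^{\LL}_X\LL(gf)^*\CG\bigr],
\qquad
\alpha\cdot(\beta\cdot\gamma)=\bigl[\CE\otimes^{\LL}_X\LL f^*(\CF\otimes^{\LL}_Y\LL g^*\CG)\bigr],
\]
so it suffices to produce a canonical isomorphism between these two complexes in $\D_{\qc}(X)$.

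First I would invoke the compatibility of $\LL f^*$ with derived tensor products, which gives a canonical isomorphism
\[
\LL f^*(\CF\otimes^{\LL}_Y\LL g^*\CG)\;\iso\;\LL f^*\CF\otimes^{\LL}_X\LL f^*\LL g^*\CG.
\]
Next I would use the pseudofunctoriality of the derived inverse image, $\LL f^*\LL g^*\cong\LL(gf)^*$, to rewrite the second factor as $\LL(gf)^*\CG$. Finally, the associativity of the derived tensor product $\otimes^{\LL}_X$ supplies the required isomorphism
\[
\CE\otimes^{\LL}_X\bigl(\LL f^*\CF\otimes^{\LL}_X\LL(gf)^*\CG\bigr)\;\iso\;(\CE\otimes^{\LL}_X\LL f^*\CF)\otimes^{\LL}_X\LL(gf)^*\CG,
\]
and composing these three canonical isomorphisms yields the desired identification. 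Passing to $K_0$-classes in $\biE{hgf}{X}{W}$ then gives $(\alpha\cdot\beta)\cdot\gamma=\alpha\cdot(\beta\cdot\gamma)$.

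There is no substantive obstacle here: each ingredient (projection of $\LL f^*$ through tensor, pseudofunctoriality of $\LL f^*$, associativity of $\otimes^{\LL}$) is a standard derived-category fact recorded in \cite{yellow}. The only thing worth a sentence of justification is that all intermediate objects lie in $\perf(hgf)$, which was already checked in the paragraph defining the product and follows by the same argument (preservation of boundedness under the appropriate $\LL f^*$ and $\otimes^{\LL}$ operations when tested against any $\CM\in\Dbq{W}$). Thus the proof is essentially bookkeeping of canonical isomorphisms, with the mild subtlety being only that one must choose representatives and verify that every complex produced along the chain remains in the relevant $\perf(\cdot)$ subcategory so that the equation makes sense on the level of $K_0$.
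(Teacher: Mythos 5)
Your proof is correct and follows essentially the same route as the paper: both reduce the equality of $K_0$-classes to the chain of canonical isomorphisms combining associativity of $\otimes^{\LL}$, the monoidal property of $\LL f^*$, and pseudo-functoriality $\LL f^*\LL g^*\cong\LL(gf)^*$. Your added remark that the intermediate complexes lie in $\perf(hgf)$ is a sensible sanity check, already covered by the argument in the paragraph defining the product.
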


\begin{proof}
 Given $\CF \in \perf(f)$, $\CG \in \perf(g)$ and $\CH \in \perf(h)$, the desired relation reduces to the following isomorphisms:
\begin{align*}
 (\CF \otimes^{\LL}_{X} \LL{}f^*\CG)\<\< \otimes^{\LL}_{X}\<\< \LL{}(g f)^*\CH &\cong
 \CF \otimes^{\LL}_{X} (\LL{}f^*\CG \otimes^{\LL}_{X} \LL{}f^*\LL{}g^*\CH) \\
 & \cong \CF \otimes^{\LL}_{X} \LL{}f^*(\CG \otimes^{\LL}_{Y} \LL{}g^*\CH)
\end{align*}
 which is true by associativity and pseudo-functoriality of the derived inverse image functors \cite[(2.5.9) and (3.2.4)]{yellow}.
\end{proof}

\begin{prop}\label{A2}
\emph{($A_2$) Functoriality of pushforward:}\va2

For maps $f\colon X \to Y\<,$ $\>g \colon Y \to Z$ and\/ $h \colon Z \to W,$ in\, $\sch/S$ with $f$ and $g$ confined, and 
$\alpha \in \biE{h g f}{X}{W},$ 
one has, in  $\biE{h}{Z}{W},$
\[
\pf{(g \circ f)} (\alpha)=\pf{g\>}\pf{f} (\alpha).
\]
\end{prop}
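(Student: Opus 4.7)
The plan is to pass to a single representative and reduce to the pseudo-functoriality of $\R(-)_*$. Since every element of $\biE{h\circ g\circ f}{X}{W}=K_0(\perf(h\circ g\circ f))$ is a finite $\ZZ$-linear combination of isomorphism classes of objects, by additivity of $\pf{f}$, $\pf{g}$ and $\pf{(g\circ f)}$ it suffices to verify the identity for a class $\alpha=[\CE]$ represented by a single complex $\CE\in\perf(h\circ g\circ f)$.

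Unwinding definitions, the left-hand side equals $[\R(g\circ f)_*\CE]\in\biE{h}{Z}{W}$. To make sense of the right-hand side one must first know that the intermediate pushforward $\pf{f}\alpha$ lies in $\biE{h\circ g}{Y}{W}$: this is exactly Proposition~\ref{pfper}, applied to the quasi-proper map $f$ together with the composite $h\circ g\colon Y\to W$, which gives $\R{}f_*\CE\in\perf(h\circ g)$. A second application of Proposition~\ref{pfper}, now to the quasi-proper map $g$ together with $h\colon Z\to W$, yields $\R{}g_*\R{}f_*\CE\in\perf(h)$, so that $\pf{g}\pf{f}\alpha=[\R{}g_*\R{}f_*\CE]\in\biE{h}{Z}{W}$ is well defined.

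The desired identity $[\R(g\circ f)_*\CE]=[\R{}g_*\R{}f_*\CE]$ in $K_0(\perf(h))$ is then an immediate consequence of the standard pseudo-functoriality isomorphism $\R(g\circ f)_*\cong \R{}g_*\circ\R{}f_*$ for derived direct images (\cfr \cite[(3.6.4)]{yellow}), which holds at the level of objects of $\perf(h)$ and therefore descends to an equality of classes in $K_0$. I do not anticipate any real obstacle: Proposition~\ref{pfper} (itself built on Theorem~\ref{absrelmor}) absorbs all the work of checking that derived pushforward preserves relative perfection at each stage, and the remaining content is the well-known functoriality of $\R(-)_*$.
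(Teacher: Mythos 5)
Your proof is correct and follows essentially the same route as the paper: reduce to a single representative, invoke Proposition~\ref{pfper} to confirm each pushforward lands where it should, and conclude from the pseudo-functoriality isomorphism $\R(g\circ f)_*\cong\R{}g_*\circ\R{}f_*$ of \cite[(3.6.4)$_*$]{yellow}. The paper's proof is just a terser version that leaves the well-definedness checks implicit (having already justified them in the definition of the pushforward operation).
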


\begin{proof}
Let $\CF \in \perf(hgf)$. The proof follows as a consequence  of the canonical isomorphism
\[
\R{}(g \circ f)_*\CF \cong \R{}g_*\R{}f_*\CF
\]
\ie{} pseudo-functoriality of derived direct images \cite[(3.6.4)$_*$]{yellow}.
\end{proof}

 \begin{prop} \label{A3}
 \emph{($A_3$) Functoriality of pullback:}\va2
 
For any diagram in $\sch/S$,  with independent squares, 
  \[
   \begin{tikzpicture}
      \draw[white] (0cm,2.5cm) -- +(0: \linewidth)
      node (12) [black, pos = 0.3] {$X''$}
      node (13) [black, pos = 0.5] {$X'$}
      node (14) [black, pos = 0.7] {$X$};
      \draw[white] (0cm,0.5cm) -- +(0: \linewidth)
      node (22) [black, pos = 0.3] {$Y''$}
      node (23) [black, pos = 0.5] {$Y'$}
      node (24) [black, pos = 0.7] {$Y$};
      \draw [->] (12) -- (13) node[above=1pt, midway, scale=0.75]{$h'$};
      \draw [->] (13) -- (14) node[above=1pt, midway, scale=0.75]{$g'$};
      \draw [->] (22) -- (23) node[below=1pt, midway, scale=0.75]{$h$};
      \draw [->] (23) -- (24) node[below=1pt, midway, scale=0.75]{$g$};
      \draw [->] (12) -- (22) node[left,  midway, scale=0.75]{$f''$};
      \draw [->] (13) -- (23) node[left,  midway, scale=0.75]{$f'$};
      \draw [->] (14) -- (24) node[right=.5pt,  midway, scale=0.75]{$f$}
                              node[left=4pt, midway, shape=circle, draw,
                                   scale=0.68]{$\!\alpha\!$};
   \end{tikzpicture}
  \]
one has, in  $ \biE{f''}{X''}{Y''},$
\[
   \pb{(g \circ h)}(\alpha)=\pb{h}\pb{g}(\alpha).
\]
\end{prop}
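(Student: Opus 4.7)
The plan is to mirror the proofs of $(A_1)$ and $(A_2)$ and reduce the asserted equality of classes to a canonical isomorphism of underlying complexes, in this case the pseudo-functoriality of derived inverse image.

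The first step is to verify that the outer rectangle in the displayed diagram is itself an independent square, so that $\pb{(g\circ h)}$ is well defined on $\biE{f}{X}{Y}$. Since both inner squares are Cartesian, their horizontal composite is again Cartesian, giving $X'' \cong X \times_Y Y''$. For the tor-vanishing part of transverseness, I would argue locally at a triple of points $y''\mapsto y'\mapsto y$ in $Y''\to Y'\to Y$ and $x\in X$ over $y$: transverseness of the right square says $\CO_{X',x'} \cong \CO_{Y',y'} \otimes^{\LL}_{\CO_{Y,y}} \CO_{X,x}$ in $\D(\CO_{Y',y'})$, and a further application of transverseness of the left square, combined with associativity of the derived tensor product, yields $\CO_{X'',x''} \cong \CO_{Y'',y''} \otimes^{\LL}_{\CO_{Y,y}} \CO_{X,x}$, hence the required vanishing $\tor^{\CO_{Y,y}}_i(\CO_{Y'',y''}, \CO_{X,x}) = 0$ for $i > 0$.

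Once this is in place, pick $\CE \in \perf(f)$ representing $\alpha$. By Proposition~\ref{pbpar} (applied in turn to the right, left, and outer independent squares), each of $\LL g'^*\CE$, $\LL h'^*\LL g'^*\CE$, and $\LL(g'\circ h')^*\CE$ lies in the appropriate $\perf$ subcategory, so both sides of the proposed identity make sense. The equality
\[
\pb{(g\circ h)}[\CE] = [\LL(g'\circ h')^*\CE] = [\LL h'^*\LL g'^*\CE] = \pb{h}\pb{g}[\CE]
\]
then reduces to the canonical pseudo-functoriality isomorphism $\LL(g'\circ h')^* \cong \LL h'^*\LL g'^*$ for derived inverse images; see \cite[(3.2.4)]{yellow}.

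The only real subtlety, and therefore the main obstacle, is the first step---the verification that the composite of two independent squares is independent---which is standard but worth making explicit so that the left-hand side of the identity is a legitimate expression in the bivariant formalism. Beyond that, the argument is pure pseudo-functoriality, exactly parallel to the proof of $(A_2)$ with $\R(-)_*$ replaced by $\LL(-)^*$.
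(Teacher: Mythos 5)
Your proof is correct and follows the same route as the paper, reducing the identity to the pseudo-functoriality isomorphism $\LL(g'\circ h')^* \cong \LL h'^*\LL g'^*$. You additionally spell out that the outer rectangle of two transverse squares is again transverse (so $\pb{(g\circ h)}$ is defined); the paper tacitly assumes this closure property of the class of independent squares, so your extra check is a reasonable and harmless addition rather than a different argument.
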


\begin{proof}
 Let us consider $\CF \in \perf(f)$. The assertion is a consequence of the isomorphism
\[
\LL{}(g' \circ h')^*\CF \cong \LL{}h'^*\LL{}g'^*\CF
\]
of pseudo-functoriality of derived inverse images \cite[(3.6.4)$^*$]{yellow}.

\end{proof}

\begin{prop}\label{A12}
\emph{($A_{12}$) Product and pushforward commute:}\va2

For any diagram in $\sch/S$  
\[
 \begin{tikzpicture}
    \draw[white] (0cm,0.5cm) -- +(0: \linewidth)
    node (21) [black, pos = 0.3 ] {$X$}
    node (22) [black, pos = 0.43] {$Y$}
    node (23) [black, pos = 0.56] {$Z$}
    node (24) [black, pos = 0.69] {$W$};
    \draw [->] (22) -- (23) node[above, midway, scale=0.75]{$g$};
    \draw [->] (23) -- (24) node[above, midway, scale=0.75]{$h$}
                            node[below=1mm, midway, shape=circle, draw,
                                                scale=0.65]{$\!\!\!\!\beta\!\!\!\!$};
    \draw [->] (21) -- (22) node[above, midway, scale=0.75]{$f$};
    \draw [->] (21) .. controls +(1,-1) and +(-1,-1) .. (23)
                              node[below=1mm, midway, shape=circle, draw,
                                                     scale=0.68]{$\!\alpha\!$}
                              node[above, midway, scale=0.75]{$g \circ f$};
 \end{tikzpicture}
\]
with $f\colon X \to Y$ confined, one has, in $\biE{h g}{Y}{W},$
\[
\pf{f} (\alpha\<\cdot\<\< \beta )=\pf{f} (\alpha)\<\cdot\<\< \beta
\]
\end{prop}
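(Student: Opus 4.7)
The plan is to unpack both sides of the equation in terms of representative complexes and reduce the identity to the projection formula, which is the essential content of $A_{12}$.

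First I would choose representatives $\CE \in \perf(g \circ f)$ for $\alpha$ and $\CG \in \perf(h)$ for $\beta$. By the definition of the product, $\alpha \cdot \beta$ is represented by $\CE \otimes^{\LL}_X \LL(g \circ f)^*\CG \in \perf(h \circ g \circ f)$; applying $\pf{f}$ yields the class of
\[
\R{}f_*\bigl(\CE \otimes^{\LL}_X \LL(g \circ f)^*\CG\bigr) \in \perf(h \circ g).
\]
On the other side, $\pf{f}(\alpha) = [\R{}f_*\CE] \in \BIK(g)$ (well-defined by Proposition \ref{pfper}), and its product with $\beta$ is represented by $\R{}f_*\CE \otimes^{\LL}_Y \LL g^*\CG \in \perf(h \circ g)$.

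Next I would invoke pseudo-functoriality of $\LL(-)^*$ to rewrite $\LL(g \circ f)^*\CG \cong \LL f^*\LL g^*\CG$, so that
\[
\CE \otimes^{\LL}_X \LL(g \circ f)^*\CG \cong \CE \otimes^{\LL}_X \LL f^*\LL g^*\CG.
\]
Then the projection formula applied to $f$ (recalled in the Preliminaries) gives a canonical isomorphism
\[
\R{}f_*\bigl(\CE \otimes^{\LL}_X \LL f^*\LL g^*\CG\bigr) \cong \R{}f_*\CE \otimes^{\LL}_Y \LL g^*\CG,
\]
identifying the two representatives, and hence their classes in $\biE{h \circ g}{Y}{W}$.

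There is no real obstacle here: once the sides are written out, the identity is literally the projection formula combined with pseudo-functoriality of inverse image. The only bookkeeping point worth noting is that the isomorphisms involved are natural, so the equality of classes holds at the level of $K_0$ without any further manipulation.
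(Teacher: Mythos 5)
Your proof is correct and follows exactly the paper's argument: unpack the two sides on representatives, rewrite $\LL(g\circ f)^*$ as $\LL f^*\LL g^*$ by pseudo-functoriality, and identify the results by the projection formula for $f$. The only difference is notational ($\CE,\CG$ vs.\ the paper's $\CF,\CH$) and a slightly more explicit spelling-out of the well-definedness bookkeeping.
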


\begin{proof}
 It is enough to consider $\CF \in \perf(gf)$ and $\CH \in \perf(h)$. The assertion is true by the following chain of isomorphims
 \begin{align*}
 \R{}f_*(\CF \otimes^{\LL}_{X} \LL{}(g \circ f)^*\CH) &\cong
 \R{}f_*(\CF \otimes^{\LL}_{X} \LL{}f^*\LL{}g^*\CH) \\
 & \cong \R{}f_*\CF \otimes^{\LL}_{Y} \LL{}g^*\CH
\end{align*}
in which we use pseudo-functoriality of derived inverse image and the projection formula.
\end{proof}

\begin{prop}
\label{A13}
\emph{($A_{13}$) Product and pullback commute:}\va3

For any diagram in $\sch/S$,  with independent squares, 
\begin{equation*} 
 \begin{tikzpicture}scale=1.3
      \draw[white] (0cm,0.5cm) -- +(0: \linewidth)
      node (E) [black, pos = 0.4] {$Z'$}
      node (F) [black, pos = 0.59] {$Z$};
      \draw[white] (0cm,2.65cm) -- +(0: \linewidth)
      node (G) [black, pos = 0.4] {$Y'$}
      node (H) [black, pos = 0.59] {$Y$};
      \draw[white] (0cm,4.8cm) -- +(0: \linewidth)
      node (I) [black, pos = 0.4] {$X'$}
      node (J) [black, pos = 0.59] {$X$};
      \draw [->] (G) -- (H) node[above, midway, sloped, scale=0.75]{$h'$};
      \draw [->] (E) -- (F) node[below=1pt, midway, sloped, scale=0.75]{$h$};
      \draw [->] (G) -- (E) node[left, midway, scale=0.75]{$g'$};
      \draw [->] (H) -- (F) node[right=1pt, midway, scale=0.75]{$g$}
                            node[left=2pt, midway, shape=circle, draw,
                                   scale=0.68]{$\!\beta\!$};
      \draw [->] (I) -- (J) node[above, midway, sloped, scale=0.75]{$h''$};
      \draw [->] (I) -- (G) node[left, midway, scale=0.75]{$f'$};
      \draw [->] (J) -- (H) node[right=1pt, midway, scale=0.75]{$f$}
                            node[left=4pt, midway, shape=circle, draw,
                                   scale=0.68]{$\!\alpha\!$};

 \end{tikzpicture}
\end{equation*}
one has, in $ \biE{g'\<\<f'}{X'}{Z'},$
\[
   \pb{h}(\alpha\<\cdot\<\< \beta)=\pb{h'}(\alpha)\<\cdot\<\pb{h}(\beta).
\]
\end{prop}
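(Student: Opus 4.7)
The plan is to reduce the identity to two standard compatibilities of derived inverse image: its commutation with the derived tensor product, and pseudo-functoriality. Pick representatives $\CE \in \perf(f)$ for $\alpha$ and $\CF \in \perf(g)$ for $\beta$. Unwinding the definitions, the left-hand side is represented by
\[
\LL{h''}^{*}\bigl(\CE \otimes^{\LL}_{X} \LL f^{*}\CF\bigr),
\]
while the right-hand side is represented by
\[
\LL{h''}^{*}\CE \,\otimes^{\LL}_{X'}\, \LL{f'}^{*}\LL{h'}^{*}\CF.
\]

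First I would use that $\LL{h''}^{*}$ commutes with $\otimes^{\LL}$ (see \cite[(3.2.4)]{yellow}) to get a canonical isomorphism
\[
\LL{h''}^{*}\bigl(\CE \otimes^{\LL}_{X} \LL f^{*}\CF\bigr)
\;\cong\;
\LL{h''}^{*}\CE \,\otimes^{\LL}_{X'}\, \LL{h''}^{*}\LL f^{*}\CF.
\]
Next I would apply pseudo-functoriality of the derived inverse image \cite[(3.6.4)${}^{*}$]{yellow} together with the equality $f \circ h'' = h' \circ f'$ coming from commutativity of the upper square, to identify
\[
\LL{h''}^{*}\LL f^{*}\CF \;\cong\; \LL(f\circ h'')^{*}\CF \;=\; \LL(h'\circ f')^{*}\CF \;\cong\; \LL{f'}^{*}\LL{h'}^{*}\CF.
\]
Composing these isomorphisms yields the desired identification of the two representatives.

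There is no real obstacle here, since the bivariant theory is set up precisely so that the independence of the squares guarantees that all the relevant pullbacks land in $\perf(f')$ and $\perf(g')$ (via Proposition~\ref{pbpar}), and the product lands in the appropriate relative perfect category. Thus the two classes in $\biE{g' f'}{X'}{Z'} = K_0(\perf(g' f'))$ agree, completing the verification of $(A_{13})$.
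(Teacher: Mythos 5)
Your proof is correct and takes essentially the same route as the paper's: both reduce the identity to the monoidal property of derived inverse image followed by pseudo-functoriality applied to $f\circ h'' = h'\circ f'$. Your additional remark invoking Proposition~\ref{pbpar} to guarantee the pullbacks remain relatively perfect is a useful bit of bookkeeping that the paper leaves implicit.
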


\begin{proof}
Let $\CF \in \perf(f)$ and $\CG \in \perf(g)$. The assertion is true by the following chain of isomorphisms
\begin{align*}
 \LL{}h''^*(\CF \otimes^{\LL}_{X} \LL{}f^*\CG) &\cong
          \LL{}h''^*\CF \otimes^{\LL}_{X} \LL{}h''^* \LL{}f^*\CG) \\
  & \cong \LL{}h''^*\CF \otimes^{\LL}_{X} \LL{}f'^* \LL{}h'^*\CG)
\end{align*}
where we use  pseudo-functoriality and monoidal property of derived inverse image.
\end{proof}

\begin{prop} \emph{($A_{23}$) Pushforward and pullback commute:}\va2

For any diagram in $\sch/S$,  with independent squares and with\/ $f$ confined, 
\begin{equation*} 
 \begin{tikzpicture}
      \draw[white] (0cm,0.5cm) -- +(0: \linewidth)
      node (E) [black, pos = 0.4 ] {$Z'$}
      node (F) [black, pos = 0.59] {$Z$};
      \draw[white] (0cm,2.65cm) -- +(0: \linewidth)
      node (G) [black, pos = 0.4 ] {$Y'$}
      node (H) [black, pos = 0.59] {$Y$};
      \draw[white] (0cm,4.8cm) -- +(0: \linewidth)
      node (I) [black, pos = 0.4 ] {$X'$}
      node (J) [black, pos = 0.59] {$X$};
      \node (label a) at (intersection of I--H and G--J) [scale=0.75] {$\da$};
      \node (label b) at (intersection of G--F and H--E) [scale=0.75] {$\db$};
      \draw [->] (G) -- (H) node[above, midway, scale=0.75]{$h'$};
      \draw [->] (E) -- (F) node[below, midway, scale=0.75]{$h$};
      \draw [->] (G) -- (E) node[left,  midway, scale=0.75]{$g'$};
      \draw [->] (H) -- (F) node[right,  midway, scale=0.75]{$g$};
      \draw [->] (I) -- (J) node[above, midway, scale=0.75]{$h''$};
      \draw [->] (I) -- (G) node[left,  midway, scale=0.75]{$f'$};
      \draw [->] (J) -- (H) node[right,  midway, scale=0.75]{$f$};
      \draw [->] (J) .. controls +(1.25,-1.25) and +(1.25,1.25) .. (F)
                       node[right=4pt, midway, shape=circle, draw,
                                   scale=0.68]{$\!\alpha\!$}
                       node[left=.5pt,  midway, scale=0.75]{$g  f$};                           
 \end{tikzpicture}
\end{equation*}
one has, in\/ $ \biE{g'}{Y'}{Z'},$
\[
\pf{f'\!\!}(\pb{h}\<(\alpha))=\pb{h}(\pf{f}\<(\alpha)). 
\]
\end{prop}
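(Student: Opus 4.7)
The plan is to unwind both sides of the asserted equality in terms of a single representative $\CE \in \perf(g \circ f)$ of $\alpha$, and then reduce everything to the base change isomorphism for the top transverse square $\da$, which is already available from~\ref{transq}.

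First, tracing through the definitions, the right-hand side equals
\[
\pb{h}(\pf{f}(\alpha)) = [\LL(h')^*\R f_*\CE].
\]
This is a legitimate element of $\BIK(g')$: indeed, $\R f_*\CE$ is $g$-perfect by Proposition~\ref{pfper}, and pulling back through the transverse square $\db$ preserves $g$-perfection by Proposition~\ref{pbpar}.

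For the left-hand side, one first needs that the outer rectangle obtained by stacking $\da$ on top of $\db$ is again transverse, so that $\pb{h}(\alpha) := [\LL(h'')^*\CE]$ makes sense as an element of $\BIK(g' \circ f')$. This is standard and can be verified either by a direct check on stalks or by pasting the base change isomorphisms for $\da$ and $\db$. Granting this, Proposition~\ref{pfper} then yields $\pf{f'}(\pb{h}(\alpha)) = [\R f'_*\LL(h'')^*\CE] \in \BIK(g')$.

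Both sides are now represented by complexes on $Y'$, and their equality amounts precisely to the base change isomorphism for the top transverse square $\da$ recorded in~\ref{transq}:
\[
\R f'_*\LL(h'')^*\CE \iso \LL(h')^*\R f_*\CE.
\]
Passing to $K_0$-classes gives the desired identity in $\BIK(g')$. The only genuine mathematical content is this final invocation of base change; the rest is bookkeeping, and the only mild obstacle is verifying that stacked transverse squares remain transverse, so that every pullback appearing in the statement is legitimately defined.
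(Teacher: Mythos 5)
Your proof is correct and follows exactly the paper's route: both sides are unwound to complexes on $Y'$ and the identity is reduced to the tor-independent base-change isomorphism $\R f'_*\LL h''^*\CE \cong \LL h'^*\R f_*\CE$ for the square $\da$, cited as \cite[(3.10.3)]{yellow}. You are somewhat more careful than the paper in flagging that the outer rectangle must be transverse (and $f'$ confined) for $\pb{h}(\alpha)$ and $\pf{f'}$ to be defined; the paper silently takes these stability properties of the underlying category data for granted, so your extra remark is a harmless and in fact welcome piece of bookkeeping rather than a divergence.
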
 

\begin{proof}
 Let $\CF \in \perf(g \circ f)$. The formula follows from the isomorphism 
\[
 \R{}f'_*\LL{}h''^*\CF  \cong \LL{}h'^*\R{}f_*\CF
\]
\ie\/ base change isomorphism for independent squares \cite[(3.10.3)]{yellow}.
\end{proof}

\begin{prop}\label{A123} 
\emph{($A_{123}$) Projection formula:}\va2

For any diagram in $\sch/S$, with $\dd$ an independent square and\/ $g$ confined,
  \[\mkern-75mu
   \begin{tikzpicture}[xscale=1.1, yscale=1.05]
      \draw[white] (0cm,0.5cm) -- +(0: .75\linewidth)
      node (22) [black, pos = 0.35] {$Y'$}
      node (23) [black, pos = 0.6] {$Y$}
      node (24) [black, pos = 0.85] {$Z$};
      \draw[white] (0cm,2.8cm) -- +(0: .75\linewidth)
      node (12) [black, pos = 0.35] {$X'$}
      node (13) [black, pos = 0.6] {$X$};
      \draw [->] (12) -- (13) node[above, midway, scale=0.75]{$g'$};
      \draw [->] (22) -- (23) node[below, midway, scale=0.75]{$g$};
      \draw [->] (23) -- (24) node[below, midway, scale=0.75]{$h$};
      \draw [->] (12) -- (22) node[left,  midway, scale=0.75]{$f'$};
      \draw [->] (13) -- (23) node[right=1pt,  midway, scale=0.75]{$f$}
                              node[left=4pt, midway, shape=circle, draw,
                                   scale=0.68]{$\!\alpha\!$};
      \draw [->] (22) .. controls +(1,-1.1) and +(-1,-1.1) .. (24)
                              node[below=5pt, midway, shape=circle, draw,
                                   scale=0.68]{$\!\beta\!$}
                              node[above, midway, scale=0.75]{$h g$};
      \node (A) at (intersection of 22--13 and 23--12) [scale=0.75] {$\dd$};                                                  
   \end{tikzpicture}
  \]
one has, in\/ $\biE{h \circ f}{X}{Z}.$
 \[
  g'_\star(\pb{g}\alpha\<\cdot\<\< \beta)=\alpha\<\cdot\pf{g\>}(\beta).
 \]
\end{prop}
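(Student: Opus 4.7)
The plan is to unwind the definitions and reduce the identity to an isomorphism of objects in $\D_\qc(X)$, which will follow from combining the projection formula with base change.

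First I choose representatives $\CE\in\perf(f)$ for $\alpha$ and $\CF\in\perf(h\circ g)$ for $\beta$. Unwinding the definitions of pullback, product, and pushforward, the left-hand side is
\[
g'_\star(\pb{g}\alpha\cdot\beta)
=\bigl[\R g'_*\bigl(\LL g'^*\CE\otimes^{\LL}_{X'}\LL f'^*\CF\bigr)\bigr]
\in\BIK(h\circ f),
\]
while the right-hand side is
\[
\alpha\cdot\pf{g}(\beta)
=\bigl[\CE\otimes^{\LL}_X\LL f^*\R g_*\CF\bigr]\in\BIK(h\circ f).
\]
So the whole statement reduces to producing a canonical isomorphism between these two representing complexes in $\D_\qc(X)$.

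The next step is a two-move calculation. Applying the projection formula for the morphism $g'$ (see \cite[Proposition (3.9.4)]{yellow}, recorded in the Preliminaries), with $\CE$ playing the role of the object on the base, gives
\[
\R g'_*\bigl(\LL g'^*\CE\otimes^{\LL}_{X'}\LL f'^*\CF\bigr)
\;\cong\;
\CE\otimes^{\LL}_X\R g'_*\LL f'^*\CF.
\]
Because the square $\dd$ is independent, the base change theorem \cite[Theorem (3.10.3)]{yellow}, recalled in \ref{transq}, supplies the further isomorphism
\[
\R g'_*\LL f'^*\CF\;\cong\;\LL f^*\R g_*\CF.
\]
Chaining these yields $\R g'_*(\LL g'^*\CE\otimes^{\LL}_{X'}\LL f'^*\CF)\cong \CE\otimes^{\LL}_X\LL f^*\R g_*\CF$, and taking classes in $K_0(\perf(h\circ f))$ concludes the proof.

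There is no real obstacle here; every complex produced along the way already lies in the right subcategory of relatively perfect complexes by the results established earlier in the paper (Propositions~\ref{pfper} and~\ref{pbpar} together with the closure of $\perf(-)$ under derived tensor product used in defining the product), so the only thing being verified is the compatibility of the classical projection formula and base change, both of which apply directly under the hypotheses (confinedness of $g$, transversality of $\dd$).
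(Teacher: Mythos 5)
Your proof is correct and follows essentially the same two-step argument as the paper: the projection formula for $g'$ followed by base change for the independent square $\dd$. The only difference is cosmetic (notation and a more explicit unwinding of the bivariant operations), so nothing further needs to be said.
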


\begin{proof}
 It is enough to consider $\CF \in \perf(f)$ and $\CG \in \perf(h g)$.
It is a consequence of the following chain of isomorphisms
\begin{align*}
 \R{}g'_*(\LL{}g'^*\CF \otimes^{\LL}_{X'} \LL{}f'^*\CG) &\cong
            \CF \otimes^{\LL}_{X'} \R{}g'_*\LL{}f'^*\CG)  \\
    & \cong \CF \otimes^{\LL}_{X} \LL{}f^*\R{}g_*\CG
\end{align*}
where the first isomorphism is the projection formula (obviously!) and the second is applying again base change for independent squares \cite[(3.10.3)]{yellow}.
\end{proof}

\begin{cosa} \textbf{Orientation}.
For every $f \colon X \to Y$ in $\sch/S$ of finite flat dimension, define
 \[
 o(f) : = [\CO_X] \in \BIK(f).
 \]
 This makes sense because in this case $\CO_X$ is $f$-perfect. An important feature of a bivariant theory is the existence of an orientation for a certain class of orientable maps \cite[Definition 2.6.2]{fmc}. We have the following
\end{cosa}

\begin{prop}
 The class of finite flat dimension morphisms are orientable for the bivariant theory $\BIK$. For such an $f$, the element $o(f) \in \BIK(f)$ defines an orientation .
\end{prop}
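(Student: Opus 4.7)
According to \cite[Definition 2.6.2]{fmc}, to verify that $o$ is an orientation it suffices to check four things: (a) the class $\CC$ of finite flat dimension morphisms in $\sch/S$ is closed under composition; (b) it is stable under pullback along independent squares; (c) $o(\id_X) = [\CO_X]$ is a two-sided unit for the bivariant product; and (d) $o$ is multiplicative with respect to composition and compatible with pullback. The plan is to dispatch each of these by unwinding the definitions, using only the projection/pullback formulas and the facts already established in this paper. I expect no real obstacle; the only points to be careful with are the closure properties under composition and pullback.

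For (a), assume $f \colon X \to Y$ and $g \colon Y \to Z$ both have finite flat dimension. For $\CM \in \Dbq{Z}$, one has
\[
\CO_X \otimes^{\LL}_X \LL(g \circ f)^* \CM \cong \LL f^*(\LL g^* \CM) \cong \CO_X \otimes^{\LL}_X \LL f^*(\CO_Y \otimes^{\LL}_Y \LL g^* \CM).
\]
Now $\CO_Y \otimes^{\LL}_Y \LL g^* \CM$ is bounded since $g$ is of finite flat dimension, and applying $\CO_X \otimes^{\LL}_X \LL f^*(-)$ preserves boundedness since $f$ is of finite flat dimension. Hence $g \circ f \in \CC$. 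For (b), given an independent square as in \ref{transverse} with $f \in \CC$, Proposition \ref{pbpar} applied to $\CE = \CO_X$ shows that $\LL{g'}^*\CO_X \cong \CO_{X'}$ is $f'$-perfect, i.e.\ $f' \in \CC$.

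For (c), the fact that $[\CO_X]$ acts as a two-sided unit for the bivariant product was already observed in the definition of the product above: for any $[\CF] \in \BIK(\id_X \circ f) = \BIK(f)$, $[\CO_X] \cdot [\CF] = [\CO_X \otimes^{\LL}_X \LL\id_X^* \CF] = [\CF]$, and symmetrically on the right via the projection formula $\LL f^* \CO_Y \cong \CO_X$. For (d), multiplicativity with respect to composition follows from
\[
o(f) \cdot o(g) = [\CO_X \otimes^{\LL}_X \LL f^* \CO_Y] = [\CO_X] = o(g \circ f),
\]
and compatibility with pullback from
\[
\pb{g}\, o(f) = [\LL{g'}^* \CO_X] = [\CO_{X'}] = o(f').
\]

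Thus the axioms of \cite[Definition 2.6.2]{fmc} are satisfied, and $o$ defines an orientation of $\BIK$ on the class of finite flat dimension morphisms.
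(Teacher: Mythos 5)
Your proof is correct and, for the two axioms that actually constitute Fulton--MacPherson's Definition 2.6.2 (multiplicativity under composition, and that $o(\id_X)=[\CO_X]$ is the unit of the ring $\biE{\id_X}{X}{X}$), it is essentially the same one-line computation the paper gives, resting on $\LL f^*\CO_Y \cong \CO_X$. Where you differ is in being more explicit about prerequisites: you verify that the class of finite-flat-dimension morphisms is closed under composition, via the chain $\CO_X \otimes^{\LL}_X \LL(g\circ f)^*\CM \cong \LL f^*\LL g^*\CM$ together with the two boundedness hypotheses, whereas the paper takes this for granted; and you additionally record stability under base change across transverse squares by specializing Proposition~\ref{pbpar} to $\CE=\CO_X$. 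These two extra checks are correct. The first one is implicitly needed for axiom (i) even to be stated (otherwise $o(g\circ f)$ is not defined), so it is a genuine improvement in rigor over the published argument. The second, stability under pullback, is not actually part of the definition of a canonical orientation in \cite[Definition~2.6.2]{fmc}; it is a separate and desirable compatibility, harmless to include but not required by the axiom you cite. No gaps; if anything, your write-up makes explicit a point the paper glosses over.
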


\begin{proof}
 We have to check two things. First, let $f \colon X \to Y$ and $g \colon Y \to Z$ be morphisms of finite flat dimension. We have
 \[
o(f) \<\cdot\<\< o(g) = [\CO_X] \<\cdot\<\< [\CO_Y] = [\CO_X \otimes^{\LL}_{X} \LL{}f^*\CO_Y] = [\CO_X] = o(g \circ f).
 \]
 Secondly, $o(\id_X) = [\CO_X]$ and this is the identity element of the ring 
 \[
 \biE{\id_X}{X}{X}
 \]
 as wanted.
\end{proof}


\end{document}